\newcommand{\IR}{\mathbb R}
\newcommand{\IB}{\mathbb B}
\newcommand{\IN}{\mathbb N}
\newcommand{\e}{\varepsilon}
\newcommand{\Conv}{\mathrm{Conv}}
\newcommand{\HH}{\mathcal H}
\newcommand{\dens}{\mathrm{dens}}
\newcommand{\Ra}{\Rightarrow}
\newcommand{\conv}{\mathrm{conv}}
\newcommand{\cone}{\mathrm{cone}}
\newcommand{\w}{\omega}
\newcommand{\dist}{\mathsf{dist}}
\newcommand{\cl}{\mathrm{cl}}
\newcommand{\aff}{\mathrm{aff}}
\newcommand{\lin}{\mathrm{lin}}
\newcommand{\F}{\mathcal F}
\newcommand{\dH}{\mathsf{d}_{\mathsf H}\kern-1pt}
\newcommand{\ConvH}{\Conv_{\mathsf H}}
\newtheorem{theorem}{Theorem}[section]
\newtheorem{lemma}[theorem]{Lemma}
\newtheorem{claim}[theorem]{Claim}
\theoremstyle{definition}
\newtheorem{remark}{Remark}
\title[On approximatively polyhedral convex sets]{A ``hidden'' characterization of\\ approximatively polyhedral convex sets in Banach spaces}
\author{Taras Banakh and Ivan Hetman}
\address{T.Banakh: Ivan Franko National University of Lviv (Ukraine), and Universytet Jana Kochanowskiego, Kielce (Poland)}
\email{t.o.banakh@gmail.com}
\address{I.Hetman: Department of Mathematics, Ivan Franko National University of Lviv, Universytetska 1, 79000, Ukraine}
\email{ihromant@gmail.com}
\subjclass{46A55; 46N10; 52B05; 52A07; 52A27; 52A37}
\keywords{Polyhedral convex set; approximatively polyhedral convex set, positively hiding convex set; infinitely hiding convex set, space of closed convex sets, Hausdorff metric}
\begin{document}
\begin{abstract} For a Banach space $X$ by $\ConvH(X)$ we denote the space of non-empty closed convex subsets of $X$, endowed with the Hausdorff metric. We prove that for any closed convex set $C\subset X$ and its metric component $\HH_C=\{A\in\ConvH(X):\dH(A,C)<\infty\}$ in $\ConvH(X)$, the following conditions are equivalent:
\begin{enumerate}
\item $C$ is approximatively polyhedral, which means that for every $\e>0$ there is a polyhedral convex subset $P\subset X$ on Hausdorff distance $\dH(P,C)<\e$ from $C$;
\item $C$ lies on finite Hausdorff distance $\dH(C,P)$ from some polyhedral convex set $P\subset X$;
\item the metric space $(\HH_C,\dH)$ is separable;
\item $\HH_C$ has density $\dens(\HH_C)<\mathfrak c$;
\item $\HH_C$ does not contain a positively hiding convex set $P\subset X$.
\end{enumerate}
If the Banach space $X$ is finite-dimensional, then the conditions (1)--(5) are equivalent to:
\begin{enumerate}
\item[(6)] $C$ is not positively hiding;
\item[(7)] $C$ is not infinitely hiding.
\end{enumerate}
A convex subset $C\subset X$ is called {\em positively hiding} (resp. {\em infinitely hiding}) if there is an infinite set $A\subset X\setminus C$ such that $\inf\limits_{a\in A}d(a,C)>0$ (resp. $\sup\limits_{a\in A}d(a,C)=\infty$) and for any distinct points $a,b\in A$ the segment $[a,b]$ meets the set $C$.
\end{abstract}
\maketitle

\section{Introduction}

In \cite{BH} the authors proved that a closed convex subset $C$ of a complete linear metric space $X$ is polyhedral in its linear hull if and only if $C$ hides no infinite subset $A\subset X\setminus C$ in the sense that $[a,b]\cap C\ne\emptyset$ for any distinct points $a,b\in A$. In this paper we shall prove a similar ``hidden'' characterization of approximatively polyhedral subsets in Banach spaces, simultaneously giving a characterization of separable components of the space of closed convex subsets $\ConvH(X)$ of a Banach space $X$.

For a Banach space $X$ by $\ConvH(X)$ we denote the space of all non-empty closed convex subsets of $X$,  endowed with the {\em Hausdorff metric}
$$\dH(A,B)=\max\{\sup_{a\in A}\dist(a,B),\sup_{b\in B}\dist(b,A)\}\in[0,\infty].$$
Here $\dist(a,B)=\inf\limits_{b\in B}\|a-b\|$ stands for the distance from a point $a\in X$ to a subset $B\subset X$ of the Banach space $X$.

It is well-known  that for each closed convex set $C\in\ConvH(X)$ the Hausdorff distance $\dH$ restricted to the set
$$\HH_C=\{A\in\ConvH(X):\dH(A,C)<\infty\}$$is a metric, see \cite[Ch2]{Sendov}. The obtained metric space $(\HH_C,\dH)$ will be called the {\em Hausdorff metric component} (or just {\em component}) of the set $C$ in $\ConvH(X)$.

In fact, the present investigation was motivated by the problem of calculating the density of components of the space $\ConvH(X)$ and detecting closed convex subsets $C\subset X$ with separable component $\HH_C$. In this paper we shall characterize such sets $C$ in terms of approximative polyhedrality as well as in ``hidden'' terms resembling those from \cite{BH}.

A convex subset $C$ of a Banach space $X$ is called
\begin{itemize}
\item {\em a closed half-space} if $C=f^{-1}([a,+\infty)$ for  some non-zero linear continuous functional $f:X\to\IR$ and some real number $a\in\IR$;
\item {\em polyhedral} if $C$ can be written as the intersection $C=\cap \mathcal F$ of a finite family $\mathcal F$ of closed half-spaces in $X$;
\item {\em approximatively polyhedral} if for every $\e>0$ there is a closed polyhedral subset $P\subset X$ that lies on  Hausdorff distance $\dH(C,P)<\e$ from $C$.
\end{itemize}

Observe that the whole space $X$ is polyhedral, being the intersection $X=\cap\F$ of the empty family $\F=\emptyset$ of closed half-spaces.

It is well-known that each compact convex subset of Banach space is approximatively polyhedral (see \cite{Gruber} for more information on that topic). This is not necessarily true for non-compact closed convex sets. For example, the convex parabola
$$P=\big\{(x,y)\in\IR^2:y\ge x^2\big\}$$is not approximatively polyhedral in $\IR^2$, while the convex hyperbola
$$H=\big\{(x,y)\in\IR^2:y\ge \sqrt{x^2+1}\big\}$$is approximatively polyhedral.

Next, we introduce some ``hidden'' properties of convex sets. Following \cite{BH}, we say that a subset $C$ of a linear space $X$ {\em hides} a set $A\subset X$ if for any two distinct points $a,b\in A$ the affine segment $[a,b]=\{ta+(1-t)b:t\in[0,1]\}$ meets the set $C$.

A convex subset $C$ of a Banach space $X$ is called
\begin{itemize}
\item {\em hiding} if $C$ hides some infinite set $A\subset X\setminus C$;
\item {\em positively hiding} if $C$ hides  some infinite set $A\subset X\setminus C$ such that $\inf_{a\in A}\dist(a,C)>0$;
\item {\em infinitely hiding}  if $C$ hides  some infinite set $A\subset X\setminus C$ such that $\sup_{a\in A}\dist(a,C)=\infty$.
\end{itemize}

It is clear that each infinitely hiding set is positively hiding and each positively hiding set is hiding.

By \cite{BH}, a closed convex subset $C$ of a complete linear metric space $X$ is hiding if and only if $C$ is not polyhedral in its closed linear hull. So, both parabola and hyperbola are hiding (being not polyhedral). Yet, the parabola is infinitely hiding (but not approximatively polyhedral) while the hyperbola is not positively hiding (but is approximatively polyhedral).

It turns out that the approximative polyhedrality and positive or infinite hiding properties are mutually exclusive, and can be characterized via properties of the characteristic cone of a given convex set.

Let us recall that the {\em characteristic cone} of a convex subset $C$ in a linear topological space $X$ is the set $V_C$ of all vectors $v\in X$ such that for every point $c\in C$ the ray $c+\bar \IR_+v=\{c+tv:t\ge 0\}$ lies in $C$. Here $\bar \IR_+=[0,\infty)$ denotes the closed half-line. The characteristic cone $V_C$ is closed in $X$ if $C$ is closed or open in $X$, see Lemma~\ref{l2.2n}.

The main result of this paper is the following characterization theorem that will be used in the paper \cite{BHS} devoted to recognizing the topological structure of the space $\ConvH(X)$. In finite-dimensional case, the equivalence of the conditions (1)--(3) was proved by Viktor Klee in \cite{Klee}.

\begin{theorem}\label{main} For a closed convex subset $C$ of a Banach space $X$ the following conditions are equivalent:
\begin{enumerate}
\item $C$ is approximatively polyhedral;
\item the characteristic cone $V_C$ of $C$ is polyhedral in $X$ and $\dH(C,V_C)<\infty$;
\item the component $\HH_C$ contains a polyhedral closed convex set;
\item the component $\HH_C$ contains no positively hiding closed convex set;
\item the space $\HH_C$ is separable;
\item the space $\HH_C$ has density $\dens(\HH_C)<\mathfrak c$.
\end{enumerate}
If the Banach space $X$ is finite-dimensional, then the conditions (1)--(6) are equivalent to:
\begin{enumerate}
\item[(7)] $C$ is not positively hiding;
\item[(8)] $C$ is not infinitely hiding.
\end{enumerate}
\end{theorem}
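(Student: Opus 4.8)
The plan is to prove the equivalences in the order $(1)\Leftrightarrow(2)\Leftrightarrow(3)$, then $(1)\Rightarrow(5)\Rightarrow(6)\Rightarrow(4)\Rightarrow(1)$, and finally, assuming $\dim X<\infty$, the chain $(1)\Rightarrow(7)\Rightarrow(8)\Rightarrow(1)$. Everything would rest on three preliminary observations that I would establish first. (a) \emph{The characteristic cone is a component invariant}: $\dH(A,B)<\infty$ forces $V_A=V_B$, so $V_A=V_C$ and $\HH_A=\HH_C$ for every $A\in\HH_C$. (b) \emph{A Minkowski--Weyl decomposition holds in Banach spaces}: if $P=\bigcap_{i=1}^n f_i^{-1}([b_i,\infty))$ is a non-empty polyhedral set, then, writing $P$ as the preimage under $x\mapsto(f_i(x))_{i\le n}$ of a polyhedron in a finite-dimensional space and lifting its polytope-plus-cone decomposition, one gets $P=Q+V_P$ with $Q$ a polytope; in particular $\dH(P,V_P)<\infty$. (c) \emph{A reduction to finite dimensions when $V_C$ is polyhedral}: if $V_C=\bigcap_{i=1}^n f_i^{-1}([0,\infty))$, then $Z:=X/\bigcap_{i\le n}\ker f_i$ is finite-dimensional (its dual is $\lin\{f_1,\dots,f_n\}$), $\bigcap_i\ker f_i\subset V_C$, so $C=\pi^{-1}(\bar C)$ for $\bar C:=\pi(C)$ and the quotient map $\pi\colon X\to Z$; moreover $A\mapsto\pi(A)$ is an isometry of $(\HH_C,\dH)$ onto $(\HH_{\bar C},\dH)$ carrying $V_C$ onto the polyhedral cone $V_{\bar C}$ with $\dH(\bar C,V_{\bar C})=\dH(C,V_C)$, while $\pi^{-1}$ sends polyhedral (resp.\ positively hiding) closed convex subsets of $Z$ to polyhedral (resp.\ positively hiding) closed convex sets belonging to $\HH_C$.

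Granting (a)--(c), the first block is short. For $(3)\Leftrightarrow(2)$: if $\HH_C$ contains a polyhedron $P$ then $V_C=V_P$ is polyhedral and $\dH(C,V_C)\le\dH(C,P)+\dH(P,V_P)<\infty$ by (a),(b); the converse uses $(2)\Rightarrow(1)\Rightarrow(3)$, the last implication being trivial. For $(1)\Leftrightarrow(2)$: approximative polyhedrality puts a polyhedron in $\HH_C$, whence (2) as above; conversely, if $V_C$ is polyhedral and $\dH(C,V_C)<\infty$, then by (c) the claim transfers to the finite-dimensional set $\bar C$, where it is Klee's theorem \cite{Klee}, and pulling an $\e$-close polyhedron back through $\pi$ does not increase the Hausdorff distance. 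For $(1)\Rightarrow(5)$: by (c) it suffices that $\HH_{\bar C}$ is separable when $\bar C$ is approximatively polyhedral in a finite-dimensional $Z$; there $V_{\bar C}$ is polyhedral, the set $K:=V_{\bar C}^{\circ}\cap S_{Z^{*}}$ is compact, all sets in the component $\HH_{\bar C}$ have support functions finite precisely on $V_{\bar C}^{\circ}$, the Hausdorff metric on $\HH_{\bar C}$ equals the sup-metric of these support functions over $K$, and $A\mapsto h_A|_K$ embeds $\HH_{\bar C}$ isometrically into the separable space $C(K)$. The implication $(5)\Rightarrow(6)$ is trivial.

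The ``hidden'' cycle $(6)\Rightarrow(4)\Rightarrow(1)$ is the heart of the argument, both parts by contraposition. For $(6)\Rightarrow(4)$: given a positively hiding $D\in\HH_C$ with witnessing set $A$, I would first \emph{truncate} $A$ — passing to a subsequence and replacing each far-away $a_n$ by a suitable point of the segment $[p_n,a_n]$ lying just outside $D$, where $p_n\in D$ satisfies $\|a_n-p_n\|\approx\dist(a_n,D)$ — so that the hiding property is preserved while $0<\delta:=\inf_{a}\dist(a,D)\le\sup_a\dist(a,D)<\infty$; then for infinite $S\subset\IN$ the sets $D_S:=\cconv\bigl(D\cup\{a_n:n\in S\}\bigr)$ all lie in $\HH_D=\HH_C$, and the hiding property (via the standard fact that a hidden point stays at a positive, $\delta$-controlled distance from the convex hull of $D$ together with the remaining hidden points) gives $\dist(a_n,D_{S'})\ge c\delta$ whenever $n\in S\setminus S'$, so $\dH(D_S,D_{S'})\ge c\delta$ for $S\ne S'$ and $\dens(\HH_C)\ge\mathfrak c$. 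For $(4)\Rightarrow(1)$: if $C$ is not approximatively polyhedral, then by $(2)$ either $V_C$ is not polyhedral or $\dH(C,V_C)=\infty$. If $V_C$ is not polyhedral, it is hiding by \cite{BH}, and since $V_C$ is a cone, rescaling a hidden set makes its distances to $V_C$ tend to $\infty$, so $V_C$ is infinitely, hence positively, hiding; if moreover $\dH(C,V_C)<\infty$ this set lies in $\HH_C$ and we are done, while otherwise we fall into the remaining case. If $\dH(C,V_C)=\infty$, then, translating so that $0\in C$ (hence $V_C\subset C$ and $\sup_{c\in C}\dist(c,V_C)=\infty$), I would manufacture, out of points of $C$ that stray far from $V_C$, an infinite subset of $X\setminus C$ hidden by $C$ and at positive infimum distance from $C$; when $V_C$ is polyhedral this is done inside the finite-dimensional quotient $\bar C$ of (c) and pulled back, but when $V_C$ is non-polyhedral and $\dH(C,V_C)=\infty$ simultaneously one must argue directly in $X$. \emph{This last construction is the main obstacle}, and I expect most of the technical effort to go there.

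Finally, assume $\dim X<\infty$. Here $(1)\Rightarrow(7)$ and $(1)\Rightarrow(8)$ come essentially for free: $(1)\Rightarrow(4)$ through the cycle above, and a set witnessing positive hiding of $C$ itself would witness failure of $(4)$, so $(1)$ forbids $C$ from being positively — a fortiori infinitely — hiding; and $(7)\Rightarrow(8)$ because infinite hiding implies positive hiding. It remains to prove $(8)\Rightarrow(1)$, i.e.\ that a non-approximatively-polyhedral $C$ in $\IR^n$ is infinitely hiding. By $(2)$, such a $C$ has $\dH(C,V_C)=\infty$ or non-polyhedral $V_C$; in the second case $C$ inherits infinite hiding from $V_C$ as above (since $C\subset V_C+rB$ once $\dH(C,V_C)<\infty$, so the far tail of a hidden set of $V_C$ lies outside $C$ yet its segments still meet $V_C\subset C$), and in the first case compactness of the unit sphere of $\IR^n$ lets one amplify the unbounded deviation $\sup_{c\in C}\dist(c,V_C)=\infty$ into an infinite set $A\subset X\setminus C$ hidden by $C$ with $\sup_{a\in A}\dist(a,C)=\infty$. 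Combining, $(1)\Rightarrow(7)\Rightarrow(8)\Rightarrow(1)$, closing the finite-dimensional loop.
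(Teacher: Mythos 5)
Your outline of the easy implications ($(1)\Leftrightarrow(2)\Leftrightarrow(3)$, $(1)\Ra(5)\Ra(6)$, and the counting argument for $(6)\Ra(4)$) matches the paper's route in substance, but the implication $(4)\Ra(1)$ --- the heart of the theorem --- has a genuine gap, and in fact two. First, you invoke \cite{BH} as ``if $V_C$ is not polyhedral, it is hiding.'' That is a misquotation: \cite{BH} says a closed convex set is hiding iff it is not polyhedral \emph{in its closed linear hull}. The cone $V_C$ can fail to be polyhedral in $X$ while being perfectly polyhedral in $\cl(V_C-V_C)$, namely when that subspace has infinite codimension; the simplest instance is $V_C=\{0\}$ for $C$ the unit ball of $\ell_2$, or for $C$ a polytope $\conv(F)$ sitting in an infinite-dimensional $X$. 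In that case $V_C$ hides nothing, and worse, $C$ itself need not be positively hiding (a polytope is not hiding at all), yet $C$ is not approximatively polyhedral because every polyhedral set in an infinite-dimensional space is unbounded. So to verify the failure of (4) you must \emph{construct} a positively hiding set in $\HH_C$ that is a genuine perturbation of $C$, not exhibit $C$ or $V_C$ as one. This is what the paper's Lemmas~\ref{l6.1}--\ref{l6.3} do, by building a biorthogonal sequence $\{(x_n,x^*_n)\}$ in $X\times V^*_C$ and cutting $\cl(C+\e\IB)$ by the countably many half-spaces $x^*_n(x)\le\frac{\e}{8}+\sup x^*_n(C)$; the points $c_n+\frac{\e}{4}x_n$ then form a hidden set at distance $\ge\frac{\e}{16}$ from the perturbed body. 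Your case analysis has no branch that could produce this, because it only ever tries to locate a positively hiding set among $\{C, V_C\}$.

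Second, the case $\dH(C,V_C)=\infty$, which you yourself flag as ``the main obstacle,'' is left unproved, and it is not a routine finite-dimensional amplification even when $\dim X<\infty$: the paper spends all of Section~5 on it (Lemmas~\ref{l5.4}--\ref{l5.8}), running an induction on the dimension of finite-dimensional sections, with a separate two-dimensional base case (Lemma~\ref{l5.6}) in which the hidden sequence $(a_n)$ is built by an explicit plane-geometry recursion, and a lifting lemma (Lemma~\ref{l5.4}) that pulls hidden sets back through quotient maps using $V_{E\cap C}-V_{E\cap C}=E$. Without this, both $(4)\Ra(2)$ and the finite-dimensional implication $(8)\Ra(2)$ are unsupported. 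A smaller point: your proposed proof of $(1)\Ra(5)$ via the support-function embedding $A\mapsto h_A|_K$ into $C(K)$ needs a justification that $h_A$ is \emph{continuous} (not merely lower semicontinuous) on all of $K=V^*_{\bar C}\cap S_{Z^*}$ including the boundary of the dual cone; the paper avoids this by proving directly (Lemma~\ref{l3.1}, via a Ramsey argument) that the polyhedral sets $\conv(F)+V_C$, $F$ ranging over finite subsets of a countable dense set, are dense in the component --- which simultaneously yields $(3)\Ra(1)$ and $(3)\Ra(5)$.
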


Let us recall that the {\em density} $\dens(X)$ of a topological space $X$ is the smallest cardinality $|D|$ of a dense subset $D$ of $X$. Topological spaces with at most countable density are called {\em separable}.

\begin{remark} Observe that the closed unit ball $C=\{x\in l_2:\|x\|\le 1\}$ in the separable Hilbert space $l_2$ is positively hiding but not infinitely hiding. This example shows that the conditions $(7)$ and $(8)$ are not equivalent in infinite-dimensional Banach spaces.
\end{remark}

Theorem~\ref{main} will be proved in Section~\ref{pf:main} after long preliminary work made in Sections~\ref{s2}--\ref{s6}.

\section{Some properties of characteristic cones}\label{s2}

This section is of preliminary character and contains some information on convex cones in Banach spaces. All linear (and Banach) spaces considered in this paper are over the field of real numbers $\IR$.

By a {\em convex cone} in a linear space $X$ we understand a convex subset $C\subset X$ such that $tc\in C$ for any  $t\in\bar\IR_+$ and $c\in C$. Here $\bar \IR_+=[0,+\infty)$ stands for the closure of the open half-line $\IR_+=(0,+\infty)$ in $\IR$.
For two subsets $A,B$ of a linear space $X$ and a real number $\lambda$, let $A+B=\{a+b:a\in A,\;b\in B\}$ be the pointwise sum of $A$ and $B$,  and  $\lambda A=\{\lambda\cdot a:a\in A\}$ be a homothetic copy of $A$. 

Each subset $F\subset X$ generates the cone
$$\cone(F)=\Big\{\sum_{i=1}^n\lambda_ix_i:\mbox{$n\in\IN$ and $(x_i)_{i=1}^n\in F^n$, $(\lambda_i)_{i=1}^n\in\bar\IR_+^n$}\Big\},$$which contains the convex hull $\conv(F)$ of $F$.

The following description of polyhedral cones and polyhedral convex sets in finite-dimensional spaces is classical and  can be found in \cite{Klee}, \cite[Theorems 1.2, 1.3]{Ziegler} or \cite[\S4.3]{Gallier}:

\begin{lemma}\label{l2.1} Let $X$ be a finite-dimensional Banach space.
\begin{enumerate}
\item A convex cone $C\subset X$ is polyhedral if and only if $C=\cone(F)$ for some finite set $F\subset X$.
\item A convex set $C\subset X$ is polyhedral if and only if $C=\cone(F)+\conv(E)$ for some finite sets $F,E\subset X$.
\end{enumerate}
\end{lemma}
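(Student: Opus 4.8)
This lemma is the classical Weyl--Minkowski decomposition theorem for finitely generated and polyhedral cones (and polyhedra), so in the paper it may simply be cited. If one wanted to reproduce a proof, the plan would be to establish (1) first and then reduce (2) to it by homogenization; throughout one fixes an auxiliary inner product $\langle\cdot,\cdot\rangle$ on the finite-dimensional space $X$ and writes half-spaces in the form $\{x:\langle a,x\rangle\le b\}$.

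\emph{Part (1).} For the implication ``finitely generated $\Ra$ polyhedral'', given $C=\cone(\{v_1,\dots,v_n\})$ I would consider $\widetilde C=\{(x,\lambda_1,\dots,\lambda_n)\in X\times\IR^n:x=\sum_{i=1}^n\lambda_iv_i,\ \lambda_1,\dots,\lambda_n\ge 0\}$, which is polyhedral (cut out by finitely many linear equalities and inequalities), and observe that $C$ is its image under the projection $X\times\IR^n\to X$. Since Fourier--Motzkin elimination shows that the image of a polyhedron under dropping one coordinate is again a polyhedron, eliminating $\lambda_1,\dots,\lambda_n$ one by one proves $C$ polyhedral. For the converse, first note that a polyhedral cone admits a homogeneous representation $C=\bigcap_{i=1}^m\{x:\langle a_i,x\rangle\le 0\}$: if $C=\bigcap_i\{x:\langle a_i,x\rangle\le b_i\}$ then $0\in C$ forces $b_i\ge 0$, while for $x\in C$ letting $t\to+\infty$ in $tx\in C$ forces $\langle a_i,x\rangle\le 0$. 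Now put $A=\cone(\{a_1,\dots,a_m\})$; by the first half $A$ is polyhedral, say $A=\bigcap_{j=1}^k\{y:\langle b_j,y\rangle\le 0\}$, and I claim $C=\cone(\{b_1,\dots,b_k\})$. The inclusion ``$\supseteq$'' holds because $a_i\in A$ gives $\langle a_i,b_j\rangle\le 0$, so each $b_j\in C$ and hence $\cone(\{b_j\})\subseteq C$. For ``$\subseteq$'', if some $x\in C$ lay outside $\cone(\{b_1,\dots,b_k\})$, then, this cone being closed (by the conical Carathéodory theorem it is a finite union of cones over linearly independent subsets of $\{b_j\}$, each closed as an injective-linear image of a closed orthant) and convex, $x$ could be separated from it by a functional $c$ with $\langle c,x\rangle>0$ and $\langle c,b_j\rangle\le 0$ for all $j$; then $c\in A$, so $c=\sum_i\mu_ia_i$ with $\mu_i\ge 0$, giving $\langle c,x\rangle=\sum_i\mu_i\langle a_i,x\rangle\le 0$ — a contradiction.

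\emph{Part (2).} Given a polyhedral $C=\{x\in X:\langle a_i,x\rangle\le b_i,\ i=1,\dots,m\}$, I would homogenize by introducing an extra variable: $\widehat C=\{(x,t)\in X\times\IR:t\ge 0,\ \langle a_i,x\rangle\le tb_i\ (i=1,\dots,m)\}$ is a polyhedral cone in $X\times\IR$ with $C=\{x:(x,1)\in\widehat C\}$. By part (1), $\widehat C=\cone(\widehat G)$ for a finite $\widehat G\subset X\times\IR$; each generator has nonnegative last coordinate, and after rescaling we may take $\widehat G=\{(e,1):e\in E\}\cup\{(f,0):f\in F\}$ with $E,F\subset X$ finite. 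Reading off the level $t=1$ then gives $C=\conv(E)+\cone(F)$. Conversely, if $C=\cone(F)+\conv(E)$ with $E,F$ finite, then $\widehat C:=\cone\big(\{(f,0):f\in F\}\cup\{(e,1):e\in E\}\big)$ is finitely generated, hence polyhedral by part (1); writing $\widehat C$ with homogeneous inequalities in $X\times\IR$ and setting $t=1$ exhibits $C=\{x:(x,1)\in\widehat C\}$ as a finite intersection of half-spaces in $X$, so $C$ is polyhedral.

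The only genuinely substantial ingredient is the Fourier--Motzkin elimination step, which powers the ``finitely generated $\Ra$ polyhedral'' implication and, through the polar/separation argument above, its converse; the remaining difficulties are bookkeeping — checking that the homogeneous representation of a polyhedral cone is legitimate, that the generators of $\widehat C$ can be normalized to last coordinate $0$ or $1$, and that slicing at $t=1$ produces exactly a set of the form $\cone(F)+\conv(E)$ and an honest finite family of half-spaces. Since all of this is entirely classical, in the paper it is enough to cite \cite{Klee}, \cite[Theorems 1.2, 1.3]{Ziegler}, or \cite[\S4.3]{Gallier}.
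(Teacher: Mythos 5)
The paper gives no proof of this lemma at all---it is stated as classical and the references \cite{Klee}, \cite[Theorems 1.2, 1.3]{Ziegler}, \cite[\S4.3]{Gallier} are cited, exactly as you anticipate. Your sketch is the standard and correct Weyl--Minkowski argument (Fourier--Motzkin elimination plus cone polarity for part (1), homogenization for part (2)), which is precisely the route taken in the cited source \cite[Theorems 1.2, 1.3]{Ziegler}, so citing the literature is all that is required here.
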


We shall be mainly interested in characteristic cones and dual characteristic cones of convex sets in Banach spaces. Let us recall that for a convex subset $C$ of a Banach space $X$ its {\em characteristic cone} $V_C$ is defined by
$$V_C=\{x\in X:\forall c\in C\;\;c+\bar\IR_+x\subset C\}\subset X.$$
By the {\em dual characteristic cone} of $C$ we understand the convex cone
$$V_C^*=\{x^*\in X^*:\sup x^*(C)<\infty\}$$lying in the dual Banach space $X^*$.

It is clear that the dual characteristic cone $V_C^*$ of a convex set $C\subset X$ coincides with the dual characteristic cone $V_{\bar C}^*$ of its closure $\bar C$ in $X$. The relation between the characteristic cones $V_C$ and $V_{\bar C}$ are described in the following simple lemma, whose proof is left to the reader as an exercise.

\begin{lemma}\label{l2.2n} Let $\bar C$ be the closure of a convex set $C$ in a Banach space $X$. Then
\begin{enumerate}
\item $V_C\subset V_{\bar C}$;
\item $V_C=V_{\bar C}$ if the set $C$ is open in $X$.
\end{enumerate}
\end{lemma}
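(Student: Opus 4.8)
The plan is to verify both inclusions directly from the definitions; the only ingredient that is not completely routine is the standard accessibility property of convex sets with non-empty interior.

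For item (1), I would take $v\in V_C$ together with an arbitrary $c\in\bar C$ and a real $t\ge 0$, and show $c+tv\in\bar C$. Pick a sequence $(c_n)_{n\in\IN}$ in $C$ with $c_n\to c$. Since $v\in V_C$, each point $c_n+tv$ lies in $C\subset\bar C$, and $c_n+tv\to c+tv$, so $c+tv\in\bar C$ because $\bar C$ is closed. As $c\in\bar C$ and $t\ge 0$ were arbitrary, $c+\bar\IR_+v\subset\bar C$ for every $c\in\bar C$, i.e. $v\in V_{\bar C}$.

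For item (2), by (1) it remains to prove $V_{\bar C}\subset V_C$ when $C$ is open. I would take $v\in V_{\bar C}$, fix $c\in C$ and $t>0$ (the case $t=0$ being trivial), and show $c+tv\in C$. Since $c\in\bar C$ and $v\in V_{\bar C}$, the ray $c+\bar\IR_+v$ lies in $\bar C$; in particular $b:=c+(t+1)v\in\bar C$. As $C$ is open, $c$ is an interior point of the convex set $C$, while $b\in\bar C$, so the accessibility lemma --- if a convex set $K$ has an interior point $a$ and $b\in\bar K$, then the half-open segment $[a,b)$ lies in the interior of $K$ --- yields $[c,b)\subset\mathrm{int}\,C=C$. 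Finally $c+tv=\tfrac{t}{t+1}\,b+\tfrac{1}{t+1}\,c$ with $\tfrac{t}{t+1}\in(0,1)$, hence $c+tv\in[c,b)\subset C$, so $v\in V_C$.

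I do not expect any genuine obstacle here: both parts are short limiting/convexity arguments. The single point deserving attention is the accessibility lemma used in (2); if one prefers not to cite it, it can be proved in two lines by fixing $r>0$ with $c+rU\subset C$ for the open unit ball $U$, approximating $b$ by a point of $C$, and using convexity of $C$ to absorb a small perturbation of the combination $\tfrac{t}{t+1}\,b+\tfrac{1}{t+1}\,c$ back into $C$. It is also worth noting that the openness hypothesis in (2) is genuinely needed: for a merely closed convex set, or one with empty interior, $V_C$ may be a proper subset of $V_{\bar C}$.
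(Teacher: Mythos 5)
Your proof is correct; the paper leaves this lemma as an exercise, and your argument (a sequential limit argument for (1), and the accessibility lemma $[a,b)\subset\mathrm{int}\,K$ for $a\in\mathrm{int}\,K$, $b\in\bar K$ for (2)) is exactly the standard one intended. The only quibble is with your closing aside: for a \emph{closed} convex set $C$ one has $\bar C=C$ and hence $V_C=V_{\bar C}$ trivially, so the genuine counterexamples to (2) without openness are sets that are neither open nor closed, e.g.\ an open half-plane with a single boundary point adjoined.
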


Our next aim is to show that the characteristic cones of two closed convex subsets $A,B\subset X$ with  $\dH(A,B)<\infty$ coincide. For this we shall need:

\begin{lemma}\label{l2.3} For each point $c_0$ of a convex set $C$ in a Banach space $X$, each vector $v\notin V_{\bar C}$ and each real number $\e\in\IR_+$ there is a real number $t\in\IR_+$ such that $\dist(c_0+tv,C)=\e$.
\end{lemma}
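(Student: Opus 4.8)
The plan is to study the function $\phi\colon[0,\infty)\to[0,\infty)$ defined by $\phi(t)=\dist(c_0+tv,C)$. Since $t\mapsto c_0+tv$ is continuous and the distance function $\dist(\cdot,C)$ is $1$-Lipschitz, $\phi$ is continuous; moreover $\phi(0)=\dist(c_0,C)=0$ because $c_0\in C$. Hence, by the intermediate value theorem, it suffices to prove that $\phi$ is unbounded: then for every $\e\in\IR_+$ there is $T>0$ with $\phi(T)>\e$, and $\phi$ takes the value $\e$ at some $t\in(0,T)\subset\IR_+$, which is exactly what we need.

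To see that $\phi$ is unbounded I would first pass to the closure: the inclusion $C\subset\bar C$ gives $\phi(t)\ge\dist(c_0+tv,\bar C)$, so it is enough to bound $\dist(c_0+tv,\bar C)$ from below. Because $v\notin V_{\bar C}$, the definition of the characteristic cone provides a point $\bar c\in\bar C$ and a number $s_0\in\IR_+$ with $\bar c+s_0v\notin\bar C$; as $\bar C$ is closed, $\delta:=\dist(\bar c+s_0v,\bar C)>0$.

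The main point is a similarity estimate exploiting convexity: for every $t>s_0$ the point $\bar c+s_0v$ is the convex combination $\bigl(1-\tfrac{s_0}{t}\bigr)\bar c+\tfrac{s_0}{t}(\bar c+tv)$, so for each $q\in\bar C$ the point $\bigl(1-\tfrac{s_0}{t}\bigr)\bar c+\tfrac{s_0}{t}q$ belongs to $\bar C$ and lies at distance $\tfrac{s_0}{t}\|(\bar c+tv)-q\|$ from $\bar c+s_0v$. Taking the infimum over $q\in\bar C$ gives $\delta\le\tfrac{s_0}{t}\dist(\bar c+tv,\bar C)$, that is, $\dist(\bar c+tv,\bar C)\ge\tfrac{t}{s_0}\delta$. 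Since $\|(c_0+tv)-(\bar c+tv)\|=\|c_0-\bar c\|$ does not depend on $t$ and $\dist(\cdot,\bar C)$ is $1$-Lipschitz, we conclude $\phi(t)\ge\dist(c_0+tv,\bar C)\ge\tfrac{t}{s_0}\delta-\|c_0-\bar c\|\to\infty$ as $t\to\infty$, so $\phi$ is unbounded and the proof is complete.

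The only step carrying real content is this similarity estimate — the observation that sliding a point farther out along a ray issued from a fixed base point of a convex set can only increase its distance to the set once the point has left the set, and in fact makes that distance blow up proportionally. Everything else (continuity of $\phi$, the intermediate value theorem, the passage from $C$ to $\bar C$, and the $1$-Lipschitz shift of base point from $\bar c$ to $c_0$) is routine, so I anticipate no serious obstacle.
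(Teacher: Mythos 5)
Your proof is correct, and while it shares the paper's overall skeleton (continuity of $t\mapsto\dist(c_0+tv,C)$, the value $0$ at $t=0$, unboundedness, and the intermediate value theorem), the one step that carries real content --- the proof that the distance tends to infinity along the ray --- is done by a genuinely different and more elementary argument. The paper first asserts that $v\notin V_{\bar C}$ yields some $t_0>0$ with $c_0+t_0v\notin\bar C$ (implicitly using the standard fact that the recession cone of a \emph{closed} convex set can be tested at any single base point), and then applies the Hahn--Banach separation theorem to produce a norm-one functional $x^*$ with $x^*(v)>0$, giving the linear lower bound $\dist(c_0+tv,C)\ge t\,x^*(v)-\sup x^*(C-c_0)$. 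You instead take the witness $\bar c\in\bar C$, $s_0>0$ with $\bar c+s_0v\notin\bar C$ directly from the definition of $V_{\bar C}$, and obtain the linear lower bound $\dist(\bar c+tv,\bar C)\ge\tfrac{t}{s_0}\,\dist(\bar c+s_0v,\bar C)$ by a purely convexity-based homothety estimate, transferring back to the base point $c_0$ via the $1$-Lipschitz property of $\dist(\cdot,\bar C)$. Your route avoids both duality and the unstated point-independence of the recession cone, at the cost of a slightly longer computation; the paper's functional $x^*$ is marginally more reusable (it is the same device that reappears in Lemma 2.5 and elsewhere), but for this lemma in isolation your argument is self-contained and complete.
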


\begin{proof} Since $v\notin V_{\bar C}$, there is a real number $t_0>0$ such that $c_0+t_0v\notin \bar C$. Consider the continuous function $$f:\bar\IR_+\to\bar\IR_+,\;\;f:t\mapsto \dist(c_0+tv,C),$$and observe that $f(0)=0$ as $c_0\in C$. We claim that $\lim_{t\to+\infty}f(t)=+\infty$. Since $c_0+t_0v\notin \bar C$, we can apply Hahn-Banach Theorem and find a linear functional $x^*\in X^*$ with unit norm such that $x^*(c_0+t_0v)>\sup x^*(\bar C)\ge x^*(c_0)$, which implies that $x^*(v)>0$. Then for any real number $t>t_0$ we get $$\dist(t v,C)=\inf_{c\in C}\|c_0+tv-c\|\ge \inf_{c\in C} |x^*(tv)-x^*(c-c_0)|=t x^*(v)-\sup x^*(C-c_0)$$ and hence $\lim_{t\to+\infty}\dist(c_0+tv,C)=\infty$. By the continuity of $f$, there is a positive real number $t$ with $\dist(c_0+tv,C)=f(t)=\e$.
\end{proof}

Now we can prove the promised:

\begin{lemma}\label{l2.4n} Let $A,B$ be two closed convex sets in a Banach space $X$. If $\dH(A,B)<\infty$, then $V_A=V_B$.
\end{lemma}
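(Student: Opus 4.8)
The plan is to prove the contrapositive of the symmetric statement: if $V_A \neq V_B$, then there is a vector belonging to, say, $V_B \setminus V_A$, and I will use this vector to manufacture points of $B$ arbitrarily far from $A$, forcing $\dH(A,B)=\infty$. By symmetry it suffices to show $V_A \subset V_B$ under the hypothesis $\dH(A,B)=:r<\infty$; equivalently, I take $v \in V_A$ and a point $b_0 \in B$, and I must show $b_0 + \bar\IR_+ v \subset B$. Suppose not: then $v \notin V_{\bar B} = V_B$ (here $B$ is closed, so $V_B = V_{\bar B}$). Apply Lemma~\ref{l2.3} to the closed convex set $B$, the point $b_0 \in B$, the vector $v \notin V_{\bar B}$, and the real number $\e := r+1$: this yields $t \in \IR_+$ with $\dist(b_0 + tv, B) = r+1 > r \ge \dH(A,B)$.

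Now I derive a contradiction from $\dH(A,B) = r$. Since $b_0 \in B$ and $\dH(A,B) = r$, there is a point $a_0 \in A$ with $\|a_0 - b_0\| \le r+1$ (indeed $\dist(b_0,A)\le r$, so we can even take $\|a_0-b_0\|\le r+\tfrac12$). Because $v \in V_A$ and $a_0 \in A$, the whole ray $a_0 + \bar\IR_+ v$ lies in $A$; in particular $a_0 + tv \in A$ for the $t$ produced above. Then
$$\dist(b_0 + tv, B) \le \dist(b_0 + tv, A) + \dH(A,B) \le \|(b_0+tv)-(a_0+tv)\| + r = \|b_0 - a_0\| + r \le (r+\tfrac12) + r.$$
Hmm — this bound is not yet a contradiction, so I need to sharpen the choice of $\e$. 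The fix is to run Lemma~\ref{l2.3} with a much larger value: pick $\e := \|b_0 - a_0\| + 2r + 1$ (after first fixing $a_0 \in A$ with, say, $\|a_0 - b_0\| \le r+1$). Then $\dist(b_0+tv, B) = \e$, while the displayed estimate gives $\dist(b_0+tv,B) \le \|b_0-a_0\| + r < \e$, a contradiction. Hence every ray $b_0 + \bar\IR_+ v$ lies in $B$, i.e. $v \in V_B$, proving $V_A \subset V_B$. Exchanging the roles of $A$ and $B$ gives the reverse inclusion and hence $V_A = V_B$.

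The only genuinely delicate point is bookkeeping the constants so that Lemma~\ref{l2.3}'s value $\e$ is chosen \emph{after} $a_0$ is fixed but is large enough to beat the triangle-inequality estimate; everything else is a direct application of Lemma~\ref{l2.3}, the identity $V_B = V_{\bar B}$ from Lemma~\ref{l2.2n}(2) (valid since $B$ is closed, so $B = \bar B$), and the definition of the Hausdorff metric. No compactness or completeness is needed beyond what Lemma~\ref{l2.3} already uses.
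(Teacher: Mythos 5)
Your proof is correct and uses essentially the same idea as the paper: both arguments reduce to Lemma~\ref{l2.3}, which produces a point on a ray in a characteristic-cone direction lying arbitrarily far from the other set, contradicting $\dH(A,B)<\infty$. The paper's version is just a little shorter because it first translates so that $0\in A\cap B$ and picks $v\in V_B\setminus V_A$, so the far point $tv$ lies exactly in $B$ and no bookkeeping of the constant $\|a_0-b_0\|$ is needed.
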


\begin{proof} We lose no generality assuming that $0\in A\cap B$. If $V_A\ne V_B$, then we can find a vector $v\in X$ that lies in $V_A\setminus V_B$ or in $V_B\setminus V_A$. We lose no generality assuming that $v\in V_B\setminus V_A$. By Lemma~\ref{l2.3}, there is a positive real number $t$ such that $\dist(tv,A)>\dH(A,B)$, which is not possible as $tv\in V_B\subset B$.
\end{proof}

Observe that for a convex $C$ containing zero, the inclusion $V_C\subset C$ implies $V_C^*\subset V^*_{V_C}$.

\begin{lemma}\label{l2.5n} For any closed convex set $C$ in a Banach space the dual characteristic cone $V^*_{V_C}$ of the characteristic cone $V_C$ of $C$ coincides with the weak$^*$ closure $\cl^*(V_C)$ of $V_C$.
\end{lemma}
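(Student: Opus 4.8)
The plan is to identify both $V^*_{V_C}$ and the weak$^*$ closure in question with the polar cone of $V_C$ in $X^*$. First I would note that, since $V_C$ is a convex cone, a functional $x^*\in X^*$ satisfies $\sup x^*(V_C)<\infty$ exactly when $x^*(v)\le 0$ for all $v\in V_C$ (otherwise $x^*$ is unbounded on the ray $\bar\IR_+v\subset V_C$). Hence
$$V^*_{V_C}=V_C^{\circ}:=\{x^*\in X^*:x^*(v)\le 0\ \text{for all}\ v\in V_C\},$$
which is automatically weak$^*$ closed. By the observation preceding the lemma (or directly: $\sup x^*(C)<\infty$ forces $x^*|_{V_C}\le 0$) we have $V_C^*\subset V^*_{V_C}=V_C^{\circ}$, and therefore $\cl^*(V_C^*)\subset V_C^{\circ}$ for free. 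So the substance of the lemma is the reverse inclusion $V_C^{\circ}\subset\cl^*(V_C^*)$; once this is shown we get $V^*_{V_C}=V_C^{\circ}=\cl^*(V_C^*)$, i.e.\ (under the canonical embedding $X\hookrightarrow X^{**}$) the dual characteristic cone of $V_C^*$ coincides with the weak$^*$ closure of $V_C$.

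For the reverse inclusion I would invoke the bipolar theorem for the dual pair $\langle X^*,X\rangle$: the $\sigma(X^*,X)$-closed convex cone generated by $V_C^*$ equals the bipolar $((V_C^*)_{\circ})^{\circ}$, where $(V_C^*)_{\circ}=\{x\in X:x^*(x)\le 0\ \text{for all}\ x^*\in V_C^*\}$ is the pre-polar cone of $V_C^*$ in $X$. Since $V_C^*$ is already a convex cone, $\cl^*(V_C^*)=((V_C^*)_{\circ})^{\circ}$, so it suffices to prove the identity $V_C=(V_C^*)_{\circ}$; then $\cl^*(V_C^*)=(V_C)^{\circ}=V_C^{\circ}=V^*_{V_C}$. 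This identity $V_C=(V_C^*)_{\circ}$ is the classical duality between the recession (characteristic) cone and the barrier (dual characteristic) cone.

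It remains to establish $V_C=(V_C^*)_{\circ}$, which I regard as the only step requiring real work, though it is routine; there is no loss of generality in assuming $0\in C$, since translating $C$ changes neither $V_C$ nor $V_C^*$. The inclusion $V_C\subset(V_C^*)_{\circ}$ is immediate: for $v\in V_C$, $x^*\in V_C^*$ and $c\in C$ we have $x^*(c)+t\,x^*(v)=x^*(c+tv)\le\sup x^*(C)<\infty$ for all $t\ge 0$, forcing $x^*(v)\le 0$. The converse I prove by contraposition: if $x\notin V_C$, choose $c_0\in C$ and $t_0>0$ with $c_0+t_0x\notin C$; as $C$ is closed and convex, the Hahn--Banach separation theorem yields $x^*\in X^*$ and $\alpha\in\IR$ with $x^*(c_0+t_0x)>\alpha\ge\sup x^*(C)$. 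The inequality $\sup x^*(C)\le\alpha<\infty$ shows that $x^*$ automatically lies in $V_C^*$ — this is the one delicate point — and then
$$t_0\,x^*(x)=x^*(c_0+t_0x)-x^*(c_0)>\alpha-\sup x^*(C)\ge 0,$$
so $x^*(x)>0$ and $x\notin(V_C^*)_{\circ}$. This gives $(V_C^*)_{\circ}\subset V_C$, and assembling the three ingredients completes the proof.
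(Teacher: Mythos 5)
Your proof is correct and follows essentially the same route as the paper's: both identify $V^*_{V_C}$ with the polar cone $V_C^{\circ}$ (hence weak$^*$ closed and containing $\cl^*(V_C^*)$), and both obtain the reverse inclusion from a weak$^*$ Hahn--Banach separation combined with the norm-topology separation showing that any $x$ with $x^*(x)\le 0$ for all $x^*\in V_C^*$ lies in $V_C$ --- your appeal to the bipolar theorem merely packages the first of these two separations, which the paper carries out by hand inside a proof by contradiction. You also correctly read the statement as asserting $V^*_{V_C}=\cl^*(V^*_C)$; the printed $\cl^*(V_C)$ is a typo, since $V_C$ lives in $X$ rather than $X^*$.
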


\begin{proof} We lose no generality assuming that $0\in C$. Observe that the dual characteristic cone $$V^*_{V_C}=\{x^*\in X^*:\sup x^*(V_C)<\infty\}=\{x^*\in X^*:\sup x^*(V_C)=0\}=\bigcap_{v\in V_C}\{x^*\in X^*:x^*(v)\le 0\}$$is weak$^*$ closed in $X^*$, being an intersection of weak$^*$-closed half-spaces in $X^*$. So, the inclusion $V^*_C\subset V^*_{V_C}$ implies $\cl^*(V^*_C)\subset V^*_{V_C}$. It remains to prove the reverse inclusion $V^*_{V_C}\subset \cl^*(V^*_C)$. Assume conversely that it is not true and find a functional $x^*\in V^*_{V_C}\setminus\cl^*(V^*_C)$. By the Hahn-Banach Theorem applied to the weak$^*$ topology of $X^*$, there is an element $x\in X$ that separates $x^*$ from $\cl^*(V^*_C)$ in the sense that $$x^*(x)>\sup\{v^*(x):v^*\in\cl^*(V^*_C)\}\ge\sup\{v^*(x):v^*\in V^*_C\}.$$ We claim that $v^*(x)\le 0$ for all functionals $v^*\in V^*_C$. Assuming that $v^*(x)>0$, we can find a positive real number $\lambda$ so large that $\lambda v^*(x)>x^*(x)$, which contradicts the choice of $x$ (because $\lambda v^*\in V^*_C$). So, $v^*(x)\le 0$ for all $v^*\in V^*_C$. We claim that $x\in V_C$.

In the opposite case, we could find a positive real number $t>0$ such that $tx\notin C$ (here we recall that $0\in C$). Applying Hahn-Banach Theorem, find a linear functional $v^*\in X^*$ such that  $v^*(tx)>\sup v^*(C)\ge 0$. Then $v^*\in V^*_C$ and $v^*(x)>0$, which contradicts the property of $x$ established at the end of the preceding paragraph. This contradiction shows that $x\in V_C$ and then $x^*(x)>0$ implies that $\sup x^*(V_C)=\infty$, which contradicts the choice of the functional $x^*\in V^*_{V_C}$. This contradiction completes the proof of the inclusion $V^*_{V_C}\subset \cl^*(V^*_C)$.
\end{proof}

The following lemma implies that polyhedral convex sets in Banach spaces lie on positive Hausdorff distance from their characteristic cones.

\begin{lemma}\label{l2.6} For a normed space $X$,  functionals  $f_1,\dots,f_n:X\to\IR$, a  vector $\mathbf a=(a_1,\dots,a_n)\in\IR^n$ with non-negative coordinates, and the polyhedral convex set $$P_{\mathbf a}=\bigcap\limits_{i=1}^nf_i^{-1}\big((-\infty,a_i]\big)$$
\begin{enumerate}
\item  the characteristic cone $V_{P_{\mathbf a}}$ of the convex set $P_{\mathbf a}$ coincides with the polyhedral cone $P_{\mathbf 0}$;
\item $\dH(P_{\mathbf a},P_{\mathbf 0})\le \dH(P_{\mathbf 0},P_{\mathbf 1})\cdot\max\limits_{1\le i\le n}a_i$,
\end{enumerate} where $\mathbf 0=(0,\dots,0)$ and  $\mathbf 1=(1,\dots,1)$.
\end{lemma}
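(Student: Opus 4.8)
The plan is to prove (1) directly from the definition of the characteristic cone and then derive (2) by a scaling/homothety argument that reduces the estimate to the single case $\mathbf a=\mathbf 1$.

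For part (1), I would first observe that for any vector $v$ and any $c\in P_{\mathbf a}$ the ray $c+\bar\IR_+v$ lies in $P_{\mathbf a}$ if and only if $f_i(c+tv)=f_i(c)+tf_i(v)\le a_i$ for all $t\ge 0$ and all $i$; since $f_i(c)\le a_i$ already, this holds for all $t\ge 0$ precisely when $f_i(v)\le 0$ for every $i$. Hence $V_{P_{\mathbf a}}=\{v\in X:f_i(v)\le 0,\ i=1,\dots,n\}=\bigcap_{i=1}^n f_i^{-1}((-\infty,0])=P_{\mathbf 0}$, using here that the coordinates of $\mathbf a$ are non-negative only to the extent of knowing $P_{\mathbf a}\ne\emptyset$ (it contains $0$, since $f_i(0)=0\le a_i$), which makes the computation meaningful. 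Note this argument is independent of $\mathbf a$, which is exactly why the characteristic cone is always $P_{\mathbf 0}$.

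For part (2), set $M=\max_{1\le i\le n}a_i$. If $M=0$ then $P_{\mathbf a}=P_{\mathbf 0}$ and there is nothing to prove, so assume $M>0$. The key observation is the homothety relation: since $f_i(Mx)=Mf_i(x)$, we have $x\in P_{M\mathbf 1}$ iff $f_i(x)\le M$ for all $i$ iff $f_i(x/M)\le 1$ for all $i$, so $P_{M\mathbf 1}=M\cdot P_{\mathbf 1}$; similarly $P_{\mathbf 0}=M\cdot P_{\mathbf 0}$. The Hausdorff metric scales under homotheties, $\dH(\lambda A,\lambda B)=\lambda\,\dH(A,B)$ for $\lambda>0$, hence $\dH(P_{M\mathbf 1},P_{\mathbf 0})=M\cdot\dH(P_{\mathbf 1},P_{\mathbf 0})$. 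It therefore remains to compare $P_{\mathbf a}$ with $P_{M\mathbf 1}$: since $0\le a_i\le M$ for every $i$, we have the inclusions $P_{\mathbf 0}\subset P_{\mathbf a}\subset P_{M\mathbf 1}$, and a general fact about the Hausdorff metric is that for nested convex sets $A\subset B\subset C$ one has $\dH(A,B)\le\dH(A,C)$ (indeed $\dH(A,B)=\sup_{b\in B}\dist(b,A)\le\sup_{c\in C}\dist(c,A)=\dH(A,C)$). Applying this with $A=P_{\mathbf 0}$, $B=P_{\mathbf a}$, $C=P_{M\mathbf 1}$ gives $\dH(P_{\mathbf a},P_{\mathbf 0})\le\dH(P_{M\mathbf 1},P_{\mathbf 0})=M\cdot\dH(P_{\mathbf 1},P_{\mathbf 0})$, which is the claimed bound.

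The only genuinely delicate point is making sure the sets in question are non-empty so that the Hausdorff distance is a well-defined (finite-or-infinite) number and the monotonicity-under-inclusion step is legitimate; this is handled by the assumption $a_i\ge 0$, which guarantees $0\in P_{\mathbf 0}\subset P_{\mathbf a}\subset P_{M\mathbf 1}$, so all three sets contain the origin. Everything else is routine: the scaling behaviour of $\dH$ under homotheties and its monotonicity on nested sets are elementary consequences of the definition $\dH(A,B)=\max\{\sup_{a\in A}\dist(a,B),\sup_{b\in B}\dist(b,A)\}$ together with $\dist(\lambda x,\lambda B)=\lambda\,\dist(x,B)$. I do not expect a serious obstacle here; the one thing to watch is the edge case $M=0$ (equivalently $\mathbf a=\mathbf 0$), which must be disposed of separately before dividing by $M$.
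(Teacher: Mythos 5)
Your proof is correct and, for part (2), follows essentially the same route as the paper: the sandwich $P_{\mathbf 0}\subset P_{\mathbf a}\subset P_{a\mathbf 1}$ with $a=\max_i a_i$, monotonicity of $\dH$ on nested sets, and homogeneity $\dH(aA,aB)=a\,\dH(A,B)$; your direct computation for part (1) is also fine (the paper leaves it as an exercise). The one thing the paper's proof does that yours does not is establish the finiteness $\dH(P_{\mathbf 0},P_{\mathbf 1})<\infty$, by writing $P_{\mathbf 1}=\conv(F)+P_{\mathbf 0}$ via Lemma~\ref{l2.1}(2); this is not required by the literal inequality you proved, but it is the content actually invoked later (e.g.\ in Lemma~\ref{l3.1} and in the implication $(1)\Rightarrow(2)$ of Theorem~\ref{main}), so it is worth adding.
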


\begin{proof} We consider the space $\IR^n$ as a Banach lattice with coordinatewise operations of minimum and maximum.
\smallskip

1. The first statement is easy and is left to the reader as an exercise.
\smallskip

2. To prove the second statement, we first check that $\dH(P_{\mathbf 0},P_{\mathbf 1})<\infty$.
By Lemma~\ref{l2.1}(2), $P_{\mathbf 1}=\conv(F)+\cone(E)$ for some finite sets $F,E\subset X$.
It follows that the cone $\cone(E)$ coincides with the characteristic cone $P_{\mathbf 0}$ of $P_{\mathbf 1}$ and hence $P_{\mathbf 1}=\conv(F)+P_{\mathbf 0}$. Then
$$\dH(P_{\mathbf 1},P_{\mathbf 0})\le\dH(\conv(F)+P_{\mathbf 0},P_{\mathbf 0})\le\dH(\conv(F),\{0\})<\infty.$$
Let $a=\max\limits_{1\le i\le n}a_i$. Taking into account that that the norm of the Banach space $X$ is homogeneous and that $P_{\mathbf 0}\subset P_{\mathbf a}\subset P_{a\mathbf 1}$, we get the required inequality
$$\dH(P_{\mathbf a},P_{\mathbf 0})\le\dH(P_{a\mathbf 1},P_{\mathbf 0})=a\cdot\dH(P_{\mathbf 1},P_{\mathbf 0})=\dH(P_{\mathbf 1},P_{\mathbf 0})\cdot \max_{1\le i\le n}a_i<\infty.$$
\end{proof}

\section{Recognizing separable components of $\ConvH(X)$}

In this section we shall prove some lemmas that will help us to recognize closed convex sets $C$ with separable component $\HH_C$. First we consider the finite-dimensional case. The following lemma was proved by V.Klee in \cite{Klee}. However, we give an alternative proof based on a Ramsey-theoretic argument.

\begin{lemma}\label{l3.1} If the component $\HH_C$ of a closed convex subset $C$ of a finite-dimensional Banach space $X$ contains a polyhedral convex set, then $\HH_C$ contains a countable dense family of polyhedral convex sets.
\end{lemma}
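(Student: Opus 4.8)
The plan is to exploit the structure theorem for polyhedral convex sets in finite dimensions (Lemma~\ref{l2.1}) together with Lemma~\ref{l2.4n} to reduce the problem to approximating finitely generated data by rational data, and then to use a Ramsey-type (compactness) argument to control the combinatorial types that arise.

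First I would fix a polyhedral convex set $P\in\HH_C$ and write $P=\conv(E)+\cone(F)$ for finite sets $E,F\subset X$, using Lemma~\ref{l2.1}(2). By Lemma~\ref{l2.4n} every $A\in\HH_C$ has $V_A=V_C=V_P=\cone(F)$, so the component $\HH_C$ consists of closed convex sets with one fixed polyhedral characteristic cone $V:=\cone(F)$. The natural dense family to produce is the collection of sets of the form $\conv(G)+V$ where $G$ ranges over finite subsets of a fixed countable dense subset $D\subset X$ (say $D=\mathbb Q^n$ after identifying $X$ with $\IR^n$ as a Banach space). There are only countably many such sets, so separability will follow once I show this family is dense in $(\HH_C,\dH)$.

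The core step is the density claim: given $A\in\HH_C$ and $\e>0$, find a finite $G\subset D$ with $\dH(A,\conv(G)+V)<\e$. Since $\dH(A,P)<\infty$, $A$ is ``close at infinity'' to $P$ and hence to $V$; more precisely I would first show that any $A$ with $V_A=V$ and $\dH(A,V)<\infty$ can be written (up to Hausdorff distance $<\e/3$) as $\conv(K)+V$ for some \emph{compact} convex set $K$ — this is where the finite-dimensionality is essential, via a cross-section/recession argument showing $A$ lies within bounded distance of $V$ only in the ``lineal'' directions and is genuinely compact transversally. Then approximate the compact convex set $K$ to within $\e/3$ in Hausdorff metric by $\conv(G_0)$ for a finite set $G_0$ (compact convex sets in $\IR^n$ are approximatively polyhedral, indeed the convex hull of finitely many of their points), and finally perturb the finitely many points of $G_0$ into $D$ to get $G\subset D$ with $\dH(\conv(G_0),\conv(G))<\e/3$; adding $V$ to all three sets only decreases Hausdorff distance (since $\dH(S_1+V,S_2+V)\le\dH(S_1,S_2)$), so the triangle inequality yields $\dH(A,\conv(G)+V)<\e$.

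The main obstacle — and the place the promised Ramsey-theoretic argument enters — is controlling the passage ``$A\leadsto\conv(K)+V$'' uniformly, i.e.\ showing that the compact ``transversal part'' $K$ can be chosen with a bound depending only on $\dH(A,V)$ and the geometry of $V$. The subtlety is that the facial/combinatorial structure of $A$ relative to the finitely many generating rays of $V$ can vary, but there are only finitely many possible combinatorial types of how a closed convex set with recession cone $V$ can sit; a compactness (finite-Ramsey) argument on these types gives a single modulus, after which the three-$\e$ estimate above goes through. Everything else — that $\conv(G)+V$ really does lie in $\HH_C$, that there are only countably many such sets, and the elementary inequality $\dH(S_1+V,S_2+V)\le\dH(S_1,S_2)$ — is routine.
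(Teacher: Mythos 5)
Your overall architecture coincides with the paper's: by Lemma~\ref{l2.4n} all members of $\HH_C$ share the polyhedral characteristic cone $V=V_C$; the candidate dense family is $\{\conv(G)+V:G\ \text{a finite subset of a countable dense}\ D\subset X\}$; and density reduces to (a) replacing a given $A\in\HH_C$, up to $\e$, by $(\text{compact convex set})+V$, and (b) approximating that compact part by $\conv(G)$ with $G\subset D$. Step (b), the inequality $\dH(S_1+V,S_2+V)\le\dH(S_1,S_2)$, and the countability/membership bookkeeping are all fine and match the paper.

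The gap is precisely at the step you yourself flag as the main obstacle, and the mechanism you propose for it does not work. The claim that ``there are only finitely many possible combinatorial types of how a closed convex set with recession cone $V$ can sit'' is not a usable finiteness statement: closed convex sets with a prescribed polyhedral recession cone admit no finite combinatorial classification (their boundaries may be smooth, have infinitely many extreme points, etc.), so there is no finite list of types to pigeonhole over and no ``single modulus'' falls out of it. What must actually be proved is: for every $A\in\HH_C$ and $\e>0$ there is $r>0$ with $\dH\bigl((A\cap r\bar\IB)+V,\,A\bigr)\le\e$. This is genuinely nontrivial and must use $\dH(A,V)<\infty$ (which you do have, via $\dH(A,C),\dH(C,P),\dH(P,V)<\infty$): for the parabola, whose recession cone is a polyhedral ray but which lies at infinite distance from it, the statement fails for every $r$. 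The paper's proof of this claim is the actual Ramsey argument: write $V=\bigcap_{i=1}^k f_i^{-1}\bigl((-\infty,0]\bigr)$, choose $\delta>0$ with $\bigcap_{i=1}^k f_i^{-1}\bigl((-\infty,\delta]\bigr)\subset V+\e\IB$ (Lemma~\ref{l2.6}); if the claim failed for all $r$, one extracts a sequence $(x_n)$ in $A$ such that for all $n<m$ some index $i(n,m)$ satisfies $f_{i(n,m)}(x_m)>f_{i(n,m)}(x_n)+\delta$; Ramsey's theorem yields an infinite set on which this index is constant, forcing $\sup f_i(A)=\infty$ and contradicting $\dH(A,V)<\infty$. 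Note that the tempting shortcut of picking one bounded point of $A$ that nearly maximizes all $f_i$ simultaneously fails, since the suprema $\sup f_i(A)$ need not be nearly attained at a common point. Until an argument of this kind is supplied, your decomposition $A\approx\conv(K)+V$ with $K$ compact is unproved, and the lemma does not follow.
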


\begin{proof} The case $C=X$ is trivial because in this case the component $\HH_C=\{X\}$ contains a unique convex set $X$, which is polyhedral as the intersection of the empty family of closed half-spaces. So, we assume that $\HH_C$ contains some polyhedral convex set $P\ne X$. Without loss of generality we can assume that $0\in P$. By Lemma~\ref{l2.4n}, $\dH(C,P)<\infty$ implies $V_C=V_P\ne X$.

 Write $P$ as a finite intersection of closed half-spaces
$$P=\bigcap_{i=1}^kf_i^{-1}\big((-\infty,a_i]\big)$$
where $f_1,\dots,f_k:X\to\IR$ are linear continuous functionals with unit norm and $a_1,\dots,a_k$ are some real numbers. It follows from $0\in P$ that $a_1,\dots,a_n\ge 0$. According to Lemma~\ref{l2.6}, we lose no generality assuming that $a_1=\dots=a_k=0$, which implies that $P$ is a polyhedral cone that coincides with its characteristic cone $V_P=V_C$. By Lemma~\ref{l2.1}, $P=\cone(B)$ for some finite subset $B\subset X$.

By our assumption, the Banach space $X$ is finite-dimensional and hence separable. So, we can fix a countable dense subset $D\subset X$. Next, for every finite subset $F\subset D$ consider its convex hull $\conv(F)$ and the polyhedral convex set $$C_F=\conv(F)+V_C=\conv(F)+\cone(B).$$ It remains to check that the countable family $$\mathcal C=\{C_F:\mbox{$F$ is a finite subset of $D$}\}$$
is dense in $\HH_C$.

Given a convex set $A\in\HH$ and $\e>0$, we shall find a finite subset $F\subset D$ with $\dH(C_F,A)< 2\e$. Denote by
$\bar \IB=\{x\in X:\|x\|\le 1\}$ the closed unit ball of the Banach space $X$. Then for every $r>0$ the set $r\bar\IB=\{r\cdot x:x\in\bar\IB\}$ coincides with the closed $r$-ball $\{x\in X:\|x\|\le r\}$.

\begin{claim}\label{cl3.2} There is $r\in\IR_+$ so large that the convex set $A_r=(A\cap r\bar\IB)+P$ is not empty and lies on the Hausdorff
distance $\dH(A_r,A)\le\e$ from $A$.
\end{claim}

\begin{proof} It follows from $\dH(A,C)<\infty$ that $V_A=V_C=P$, see Lemma~\ref{l2.4n}. Then for each $r\in\IR_+$ we get the inclusion
$$A_r=(A\cap r\bar\IB)+P\subset A+P=A+V_A=A.$$
Assuming that $\dH(A_r,A)>\e$ for all $r\in\IR_+$, we can construct an increasing sequence of positive real numbers $(r_n)_{n\in\w}$ and
a sequence of points $(x_n)_{n\in\w}$ in $A$ such that $\|x_n\|\le r_n$ and $\dist(x_{n+1},A_{r_{n}})>\e$ for all $n\in\w$. Consequently,
for every $n<m$ we get $$(x_m+\e\bar\IB)\cap(x_n+P)\subset (x_m+\e\bar\IB)\cap (A_{r_n}+P)=\emptyset,$$
which implies $x_m-x_n\notin\e\bar\IB+P$.

Let us recall that $P=\bigcap_{i=1}^k H_i$ where $H_i=f_i^{-1}\big((-\infty,0]\big)$ for $i\le k$. Using Lemma~\ref{l2.6}, we can choose $\delta>0$ such that
$\bigcap_{i=1}^kf_i^{-1}\big((-\infty,\delta]\big)\subset P+\e\IB$.

It follows that for any $n<m$ we get $x_m-x_n\notin \bigcap_{i=1}^nf_i^{-1}\big((-\infty,\delta]\big)$
and hence there is a number $i=i(n,m)\in\{1,\dots,k\}$ such that $x_m-x_n\notin f_i^{-1}\big((-\infty,\delta]\big)$ and hence $f_i(x_m)>f_i(x_n)+\delta$.

The correspondence $i:(n,m)\to i(n,m)$ can be thought as a finite coloring of the set $[\w]^2=\{(n,m)\in\w^2:n<m\}$ of pairs of positive
integers. The Ramsey Theorem 5 of \cite{Ramsey} yields an infinite subset $\Omega\subset \w$ and a number $i\in\{1,\dots,k\}$ such that $i(n,m)=i$ and hence $f_i(x_m)>f_i(x_n)+\delta$ for
all numbers $n<m$ in $\Omega$. This implies $\sup_{c\in C}f_i(c)\ge\sup_{n\in \Omega}f_i(x_n)=\infty$,  which is not possible as $\sup f_i(C)\le (\sup f_i(P))+\dH(C,P)=\dH(C,P)<\infty$.
\end{proof}

Claim~\ref{cl3.2} yields us a real number $r\in\IR_+$ such that the intersection $A\cap r\bar\IB$ is not empty and $\dist(A_r,A)<\e$ where
$A_r=(A\cap r\bar\IB)+P$. By \cite{Gruber}, the compact convex set $A\cap r\bar\IB$ can be approximated by a finite subset $F\subset D$ such that $\dH(\conv(F),A\cap r\bar \IB)<\e$. Then the polyhedral convex set $C_F=\conv(F)+P$ lies on Hausdorff distance $\dH(C_F,A_r)<\e$ from the
set $A_r$ and hence on Hausdorff distance $\dH(C_F,A)\le \dH(C_F,A_r)+\dH(A_r,A)<2\e$ from $A$.
\end{proof}

In order to generalize Lemma~\ref{l3.1} to infinite-dimensional Banach spaces $X$, we now establish some simple properties of maps between spaces of closed convex sets, induced by quotient operators.

Let us recall that for a Banach space $(X,\|\cdot\|_X)$ and a closed linear subspace $Z\subset X$, the quotient Banach space $Y=X/Z$ is endowed with the norm
$$\|y\|_Y=\inf\big\{\|x\|_X:x\in q^{-1}(y)\big\},$$
where $q:X\to Y$, $q:x\mapsto x+Z$, stands for the quotient operator.

The quotient operator $q:X\to Y$ induces an operator $\bar{q}:\ConvH(X)\to\ConvH(Y)$ assigning to each closed convex set $C\subset X$ the closure $\bar qC$ of its image $qC$ in the Banach space $Y$. The following lemma is simple and is left to the reader as an exercise.

\begin{lemma}\label{l3.3} Let $Z$ be a closed linear subspace of a Banach space $X$, $Y=X/Z$ be the quotient Banach space, and $q:X\to Y$ be the quotient operator.
\begin{enumerate}
\item A convex set $C\subset X$ with $Z\subset V_C$ is closed in $X$ if and only if the image $qC$ is closed in $Y$.
\item A convex set $C\subset X$ with $Z\subset V_C$ is polyhedral in $X$ if and only if its image $qC$ is polyhedral in $Y$.
\item For any non-empty convex sets $A,B\subset X$ with $Z\subset V_A\cap V_B$ we get $\dH(A,B)=\dH(qA,qB)$.
\end{enumerate}
\end{lemma}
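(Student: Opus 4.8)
The plan is to reduce all three assertions to a single structural observation about $Z$ and $q$. Since $Z$ is a \emph{linear} subspace contained in the convex cone $V_C$, for each $z\in Z$ both $z$ and $-z$ lie in $V_C$, so that $C+Z\subseteq C$ and $C-Z\subseteq C$; hence $C+Z=C$, i.e. $C=q^{-1}(qC)$. Thus $C$, and equally $X\setminus C$, is a union of whole fibers of $q$. Two standard facts about the quotient operator will be used without further comment: $q$ is $1$-Lipschitz (so $\|q(x)\|_Y\le\|x\|_X$), and $q$ is an open map (by the open mapping theorem, or since quotient maps of topological groups are open). In each of (1)--(3) the case of the empty set is immediate, so I would assume throughout that the sets involved are non-empty.

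For (1): if $qC$ is closed in $Y$, then $C=q^{-1}(qC)$ is closed by continuity of $q$. Conversely, if $C$ is closed, then $X\setminus C$ is open, so $q(X\setminus C)$ is open in $Y$; and since $X\setminus C$ is a union of fibers of $q$, we have $q(X\setminus C)=Y\setminus qC$ (every fiber of $q$ lies entirely inside $C$ or entirely inside $X\setminus C$). Hence $Y\setminus qC$ is open and $qC$ is closed.

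For (2): the ``if'' direction is obtained by pulling a half-space representation $qC=\bigcap_{j=1}^m g_j^{-1}((-\infty,b_j])$ back through $q$, since each $g_j\circ q\in X^*$ is non-zero and $C=q^{-1}(qC)=\bigcap_{j=1}^m(g_j\circ q)^{-1}((-\infty,b_j])$. The ``only if'' direction is the step calling for an idea. First I would translate $C$ so that $0\in C$; this changes neither $V_C$ nor the polyhedrality of $C$ and of $qC$, and now $Z\subseteq C$. Fixing a representation $C=\bigcap_{i=1}^n f_i^{-1}((-\infty,a_i])$ with non-zero $f_i\in X^*$, the inclusion $Z\subseteq C\subseteq f_i^{-1}((-\infty,a_i])$ shows that $f_i$ is bounded above on the subspace $Z$, which forces $f_i|_Z\equiv 0$; hence $f_i$ factors as $f_i=\tilde f_i\circ q$ for a unique non-zero functional $\tilde f_i\in Y^*$. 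A short verification using $C=q^{-1}(qC)$ then gives $qC=\bigcap_{i=1}^n\tilde f_i^{-1}((-\infty,a_i])$, which is polyhedral.

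For (3): since $q$ is $1$-Lipschitz, $\dist(q(a),qB)\le\|q(a-b)\|_Y\le\|a-b\|_X$ for every $b\in B$, whence $\dist(q(a),qB)\le\dist(a,B)$; and conversely, using $B+Z=B$ (which follows as above from $Z\subseteq V_B$),
$$\dist(q(a),qB)=\inf_{b\in B}\inf_{z\in Z}\|a-b-z\|_X=\dist(a,B+Z)=\dist(a,B).$$
Taking the supremum over $a\in A$, and the symmetric one over $b\in B$, and recalling that the Hausdorff distance and such suprema are unaffected by passing to closures, one gets $\dH(A,B)=\dH(qA,qB)$. Among all these steps I expect only the forward implication in (2) to be a genuine obstacle: everything there hinges on the normalisation $0\in C$, which is exactly what upgrades ``$f_i$ bounded above on the linear subspace $Z$'' to ``$f_i$ vanishes on $Z$'', and so lets $f_i$ descend to the quotient.
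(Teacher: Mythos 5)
Your proof is correct; the paper in fact gives no proof of Lemma~\ref{l3.3} (it is explicitly ``left to the reader as an exercise''), and your argument---reducing everything to the observation that $Z\subset V_C$ for a linear subspace $Z$ forces $C+Z=C$, i.e.\ $C=q^{-1}(qC)$, combined with the openness and $1$-Lipschitz property of $q$ and the factorization of functionals vanishing on $Z$ through the quotient---is exactly the standard argument the authors evidently intend.
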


Now we are able to prove an (infinite-dimensional) generalization of Lemma~\ref{l3.1}, which will be used in the proof of the implications $(3)\Ra(1,5)$ from Theorem~\ref{main}.

\begin{lemma}\label{l3.4} If the component $\HH_C$ of a non-empty closed convex subset $C$ of a Banach space $X$ contains a polyhedral convex set, then $\HH_C$ contains a countable dense family of polyhedral closed sets, which implies that the space $\HH_C$ is separable and the convex set $C$ is approximatively polyhedral.
\end{lemma}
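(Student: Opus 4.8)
The plan is to reduce Lemma~\ref{l3.4} to the finite-dimensional Lemma~\ref{l3.1} by means of a quotient construction. Suppose $\HH_C$ contains a polyhedral convex set $P$. By Lemma~\ref{l2.4n} we have $V_C = V_P$, and by Lemma~\ref{l2.6}(1) the characteristic cone $V_P$ is itself a polyhedral cone, say $V_P = \bigcap_{i=1}^n f_i^{-1}((-\infty,0])$ for unit-norm functionals $f_1,\dots,f_n \in X^*$. First I would set $Z = \bigcap_{i=1}^n \ker f_i = \lin(V_P) \cap (-\lin(V_P))$, i.e. the largest linear subspace contained in $V_P$; this is a \emph{finite-codimensional} closed linear subspace of $X$, since $X/Z$ embeds into $\IR^n$ via $x+Z \mapsto (f_1(x),\dots,f_n(x))$. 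Thus $Y := X/Z$ is a finite-dimensional Banach space, and the quotient operator $q:X\to Y$ is well-behaved.

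The key observation is that $Z \subset V_P = V_C \subset V_A$ for \emph{every} $A \in \HH_C$ (again by Lemma~\ref{l2.4n}, $V_A = V_C$). Therefore Lemma~\ref{l3.3} applies uniformly across the whole component: for every $A \in \HH_C$ the image $qA$ is closed in $Y$ (part 1), and for any $A,B\in\HH_C$ we have $\dH(qA,qB) = \dH(A,B)$ (part 3). Consequently $\bar q = q$ restricted to $\HH_C$ is an isometry of $\HH_C$ onto a subset of $\ConvH(Y)$, in fact onto the component $\HH_{qC}$ in $\ConvH(Y)$ (surjectivity: given a closed convex $B\subset Y$ with $\dH(B,qC)<\infty$, its full preimage $q^{-1}(B)$ is closed convex in $X$, contains $Z$ in its characteristic cone, and maps back to $B$; and $\dH(q^{-1}(B),C)=\dH(B,qC)<\infty$). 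Moreover $qP$ is polyhedral in $Y$ by Lemma~\ref{l3.3}(2), so $\HH_{qC}$ contains a polyhedral convex set.

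Now I invoke Lemma~\ref{l3.1} in the finite-dimensional space $Y$: the component $\HH_{qC}$ contains a countable dense family $\{C_n\}_{n\in\w}$ of polyhedral convex subsets of $Y$. Pulling back, I set $\tilde C_n := q^{-1}(C_n) \subset X$. By Lemma~\ref{l3.3}(1,2) each $\tilde C_n$ is closed, convex and polyhedral in $X$, and by Lemma~\ref{l3.3}(3) the family $\{\tilde C_n\}_{n\in\w}$ is dense in $\HH_C$ because the isometry $q:\HH_C\to\HH_{qC}$ carries it onto the dense family $\{C_n\}$. This gives a countable dense family of polyhedral closed convex sets in $\HH_C$, so $\HH_C$ is separable. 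Finally, approximative polyhedrality of $C$ is immediate: since the family is dense and $C\in\HH_C$, for every $\e>0$ some $\tilde C_n$ satisfies $\dH(\tilde C_n, C)<\e$, and $\tilde C_n$ is polyhedral.

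The only point requiring genuine care — and the main obstacle — is verifying that $Z$ is closed and finite-codimensional and that $Z \subset V_C$; the first follows because $Z$ is an intersection of finitely many kernels of continuous functionals, and the codimension is at most $n$ via the embedding into $\IR^n$; the inclusion $Z\subset V_C$ follows from $Z \subset V_P$ (each element of $Z$ is fixed by all $f_i$, hence lies in $\bigcap f_i^{-1}((-\infty,0])=V_P$) together with $V_P = V_C$. Everything else is a routine application of Lemmas~\ref{l2.4n}, \ref{l3.1} and \ref{l3.3}.
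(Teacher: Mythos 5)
Your proposal is correct and follows essentially the same route as the paper: quotient by the finite-codimensional lineality space $Z=\bigcap_i\ker f_i=V_P\cap(-V_P)$ of the polyhedral characteristic cone, apply the finite-dimensional Lemma~\ref{l3.1} in $Y=X/Z$, and pull the countable dense polyhedral family back via Lemma~\ref{l3.3}. The surjectivity of $q$ onto $\HH_{qC}$ that you verify is a harmless extra; the paper only uses that $qA\in\HH_{qC}$ for each $A\in\HH_C$ together with the isometry property of Lemma~\ref{l3.3}(3).
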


\begin{proof} The statement of the lemma is trivial if $C=X$. So, we assume that $C\ne X$ and $\HH_C$ contains a polyhedral convex set $P$. Replacing $P$ by its shift, we can assume that $0\in P$. Write $P$ as a finite intersection of closed half-spaces
$$P=\bigcap_{i=1}^kf_i^{-1}\big((-\infty,a_i]\big),$$
where $f_1,\dots,f_k:X\to\IR$ are suitable linear continuous functionals and $a_1,\dots,a_k$ are suitable non-negative real numbers. It follows from $\dH(C,P)<\infty$ that the characteristic cone $$V_C=V_P=\bigcap_{i=1}^kf_i^{-1}\big((-\infty,0]\big)$$
is polyhedral and the closed linear subspace
$$Z=-V_C\cap V_C=\bigcap_{i=1}^kf_i^{-1}(0)$$has finite codimension in $X$.

Then the quotient Banach space $Y=X/Z$ is finite-dimensional. Taking into account that $Z\subset V_P\cap V_C$ and applying Lemma~\ref{l3.3}(1,3), we conclude that the images $qC$ and $qP$ are closed convex sets in $Y$ with $\dH(qC,qP)<\infty$. Moreover, the convex set $qP$ is polyhedral in $Y$. Since the Banach space $Y$ is finite-dimensional, it is legal to apply Lemma~\ref{l3.1} in order to find a dense countable subset $\mathcal D_Y\in\HH_{qC}$ that consists of polyhedral convex sets. By Lemma~\ref{l3.3}(2), the countable family $\mathcal D_X=\{q^{-1}(D):D\in\mathcal D_X\}$ consists of polyhedral convex subsets of $X$ and by Lemma~\ref{l3.3}(3) is dense in the component $\HH_C$ of $C$.
\end{proof}

\section{Recognizing non-separable components of $\ConvH(X)$}

In this section we develop some tools for recognizing non-separable components of the space $\ConvH(X)$.

\begin{lemma}\label{l4.1} Let $C$ be a convex subset of a linear space $X$ and $a,b\in X$ two points such that $[a,b]\cap C\ne\emptyset$. Then for any points $x\in\conv(C\cup\{a\})$ and $y\in\conv(X\cup\{b\})$ we get $[x,y]\cap C\ne\emptyset$.
\end{lemma}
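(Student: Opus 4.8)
The plan is to argue purely in barycentric coordinates. First I would note that $\conv(X\cup\{b\})=X$, so the statement as printed is not literally correct (take $X=\IR$, $C=\{0\}$, $a=1$, $b=-1$, $x=1$, $y=2$): the intended hypothesis must be $y\in\conv(C\cup\{b\})$, which restores the symmetry between the pairs $(a,x)$ and $(b,y)$, and I shall prove that version. So fix $c_0\in[a,b]\cap C$ and write $c_0=(1-s)a+sb$ with $s\in[0,1]$, together with $x=(1-t)c_1+ta$ and $y=(1-u)c_2+ub$ where $c_1,c_2\in C$ and $t,u\in[0,1]$. The idea is to find a point of the segment $[x,y]$ whose expansion in $a,b,c_1,c_2$ has its $a$- and $b$-coefficients in the exact ratio $(1-s):s$; then the $a,b$-part collapses to a non-negative multiple of $c_0$, and the point becomes a convex combination of the three points $c_0,c_1,c_2\in C$.

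Next I would clear the degenerate cases. If $t=0$ then $x=c_1\in C$, and if $u=0$ then $y=c_2\in C$; if $s=0$ then $a=c_0\in C$, so $x=(1-t)c_1+ta\in C$ by convexity of $C$, and symmetrically $s=1$ forces $y\in C$. In all of these $[x,y]\cap C\neq\emptyset$ for free, so from now on $s\in(0,1)$ and $t,u\in(0,1]$, whence the quantity $st+u(1-s)$ is strictly positive. I would then set
$$\lambda=\frac{st}{st+u(1-s)}\in(0,1),\qquad \mu=\frac{ut}{st+u(1-s)}\ge 0,$$
and look at $z=(1-\lambda)x+\lambda y\in[x,y]$. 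Expanding gives $z=(1-\lambda)(1-t)c_1+\lambda(1-u)c_2+\big((1-\lambda)t\,a+\lambda u\,b\big)$, and the choice of $\lambda$ is exactly what makes $(1-\lambda)t=\mu(1-s)$ and $\lambda u=\mu s$, so the bracket equals $\mu c_0$. Thus $z=(1-\lambda)(1-t)c_1+\lambda(1-u)c_2+\mu c_0$ with all three coefficients non-negative (using $t,u\le1$ and $\lambda\in[0,1]$) and summing to $1$ (since $(1-\lambda)t+\lambda u=\mu$). Hence $z\in\conv\{c_0,c_1,c_2\}\subset C$, so $z\in[x,y]\cap C$, as required.

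I do not expect a genuine obstacle here: the argument is elementary convex geometry and the only non-mechanical step is guessing the weight $\lambda$, which is forced by the requirement that the ratio of the $a$- and $b$-contributions along $[x,y]$ coincide with their ratio inside $c_0$. The one place needing care is the bookkeeping of the degenerate cases, so that the denominator $st+u(1-s)$ is positive and all the coefficients genuinely lie in $[0,1]$.
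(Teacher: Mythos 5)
Your proof is correct and follows essentially the same route as the paper's: both express $x$, $y$ and the point $c_0\in[a,b]\cap C$ in barycentric coordinates and then pick the convex combination on $[x,y]$ whose $a$- and $b$-contributions collapse onto a multiple of $c_0$, landing the point in $\conv\{c_0,c_1,c_2\}\subset C$; your weight $\lambda$ is exactly the paper's $u$ up to relabelling. You are also right that $\conv(X\cup\{b\})$ is a typo for $\conv(C\cup\{b\})$ --- the paper's own proof uses the latter.
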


\begin{proof} The lemma trivially holds if $x$ or $y$ belongs to the set $C$. So, we assume that $x,y\notin C$. It follows that the point $x\in\conv(C\cup\{a\})\setminus C$ can be written as $x=t_xa+(1-t_x)c_x$ for some $t_x\in(0,1]$ and $c_x\in C$. The same is true for the point $y\in\conv(C\cup\{b\})\setminus C$ which can be written as $y=t_yb+(1-t_y)c_y$ for some  $t_y\in(0,1]$ and $c_y\in C$.

By our assumption, the intersection $[a,b]\cap C$ contains some point $c=ta+(1-t)b$ where $t\in[0,1]$.

The lemma will be proved as soon as we check that the segment $[x,y]$ meets the convex hull $\conv(\{c,c_x,c_y\})\subset C$ of the 3-element set $\{c,c_x,c_y\}\subset C$. This will follow as soon as we find real numbers $u,\alpha,\alpha_x,\alpha_y\in[0,1]$ such that $\alpha+\alpha_x+\alpha_y=1$ and
$$\alpha c+\alpha_xc_x+\alpha_yc_y=ux+(1-u)y=u(t_xa+(1-t_x)c_x)+(1-u)(t_yb+(1-t_y)c_y).$$

The number $u\in[0,1]$ can be found from the equation
$$ut_xa+(1-u)t_yb=(ut_x+(1-u)t_y)(ta+(1-t)b)=(ut_x+(1-u)t_y)c,$$
which has a well-defined solution $$u=\frac{t\cdot t_y}{t\cdot t_y+(1-t)t_x}.$$
The remaining numbers $\alpha,\alpha_x$ and $\alpha_y$ can be found as
$$\alpha=ut_x+(1-u)t_y,\quad \alpha_x=u(1-t_x),\quad\alpha_y=(1-u)(1-t_y).$$
\end{proof}

The following lemma will be used for the proof of the implication $(6)\Ra(4)$ from Theorem~\ref{main}.

\begin{lemma}\label{l4.2} The component $\HH_C\subset\ConvH(X)$ of a closed convex subset $C$ of a Banach space $X$ has density
$\dens(\HH_C)\ge\mathfrak c$ provided $\HH_C$ contains a positively hiding closed convex subset $P$ of $X$.
\end{lemma}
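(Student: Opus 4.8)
The plan is to exploit the positively hiding set $A\subset X\setminus P$ witnessing that $P$ is positively hiding, together with the fact that $P$ hides $A$, in order to construct a family of $\mathfrak c$ many closed convex sets in $\HH_C$ that are pairwise at Hausdorff distance bounded below by a fixed positive constant. Since $\dH(P,C)<\infty$ implies $\HH_P=\HH_C$, it suffices to produce such a family inside $\HH_P$. Let $\delta=\inf_{a\in A}\dist(a,P)>0$. Because $A$ is infinite, I would first pass to a countably infinite subsequence $\{a_n:n\in\w\}\subset A$ of distinct points, and for each subset $S\subset\w$ define
$$C_S=\cconv\big(P\cup\{a_n:n\in S\}\big),$$
the closed convex hull of $P$ together with the selected points. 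Each $C_S$ contains $P$, and one checks (using that the $a_n$ lie within bounded distance of $P$ in the relevant quotient, or directly) that $\dH(C_S,P)<\infty$, so $C_S\in\HH_P=\HH_C$.

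The key point is the separation estimate: for $S\ne S'$, say $m\in S\setminus S'$, I want to show $\dH(C_S,C_{S'})\ge \delta/2$ or some comparable fixed constant, by proving that $\dist(a_m,C_{S'})$ stays bounded below. Here is where Lemma~\ref{l4.1} enters: since $P$ hides $A$, for any two distinct $a_i,a_j\in A$ the segment $[a_i,a_j]$ meets $P$; Lemma~\ref{l4.1} then propagates this to show that segments between points of $\conv(P\cup\{a_i\})$ and $\conv(P\cup\{a_j\})$ still meet $P$. Iterating this observation, I expect to deduce that $a_m$ is ``hidden behind $P$'' from every point of $C_{S'}$ — more precisely, that any point of $C_{S'}$ can be joined to $a_m$ by a segment through $P$ — which, combined with convexity of $P$ and the fact that $a_m$ is at distance $\ge\delta$ from $P$, should force $\dist(a_m,C_{S'})$ to be bounded below by a constant depending only on $\delta$ (and not on $S'$). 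A useful geometric sublemma to isolate is: if $c+\bar\IR_+v$-type rays or segments from a point $x$ to $a_m$ all pass through a convex set $P$ with $\dist(a_m,P)\ge\delta$, and $x\in C_{S'}$, then $\dist(a_m,C_{S'})$ is comparable to $\delta$; this follows because the nearest point of $C_{S'}$ to $a_m$, being a convex combination involving $P$ and other $a_n$'s with $n\ne m$, is itself approached through $P$.

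Granting the uniform separation $\dH(C_S,C_{S'})\ge c_0>0$ for all $S\ne S'$ in $2^\w$, the family $\{C_S:S\subset\w\}$ is a $c_0$-separated subset of $\HH_C$ of cardinality $\mathfrak c$, and a metric space containing a $c_0$-separated set of size $\mathfrak c$ has density at least $\mathfrak c$ (any dense set must contain a point within $c_0/2$ of each $C_S$, and these balls are disjoint). This gives $\dens(\HH_C)\ge\mathfrak c$ as required.

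The main obstacle will be establishing the uniform lower bound on $\dist(a_m,C_{S'})$. The delicate issue is that $C_{S'}$ contains all of $P$ plus infinitely many far-away points $a_n$, so a priori $a_m$ could be close to some convex combination of these; I must use the hiding property in an essential way to rule this out, and I expect the cleanest route is to first prove that $C_{S'}\cap\big(a_m+\tfrac{\delta}{2}\bar\IB\big)=\emptyset$ by showing any point of that ball would yield, via Lemma~\ref{l4.1}, a segment from a point near $a_m$ to $a_m$ itself that meets $P$, contradicting $\dist(a_m,P)\ge\delta$. Getting the bookkeeping of the convex combinations and the repeated application of Lemma~\ref{l4.1} exactly right — in particular handling the case where the near point is a combination involving several of the $a_n$ simultaneously — is the part that requires care.
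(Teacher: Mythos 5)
Your overall strategy --- indexing closed convex hulls $C_S=\cconv\bigl(P\cup\{a_n:n\in S\}\bigr)$ by subsets $S\subset\w$ and separating them via Lemma~\ref{l4.1} --- is exactly the paper's, but there is a genuine gap at the step ``$\dH(C_S,P)<\infty$.'' Positive hiding only bounds $\dist(a,P)$ from \emph{below}; nothing prevents $\sup_{a\in A}\dist(a,P)=\infty$ (e.g.\ the parabola $\{(x,y):y\ge x^2\}$ hides sets whose points recede arbitrarily far from it, since a chord over a horizontal gap $d$ rises $d^2/4$ above the boundary). For such a witness set and infinite $S$, the set $C_S$ contains points at unbounded distance from $P$, so $C_S\notin\HH_P$ and your family does not live in the component at all. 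Your parenthetical ``using that the $a_n$ lie within bounded distance of $P$'' assumes precisely what fails. The missing idea is a normalization: fix $c_0\in P$ and replace each $a\in A$ by a point $b_a\in[c_0,a]$ with $\dist(b_a,P)=\delta$ exactly (such a point exists by continuity of $t\mapsto\dist(c_0+t(a-c_0),P)$); Lemma~\ref{l4.1} shows the new set $B=\{b_a\}$ is still infinite and still hidden behind $P$, and now $B\subset P+2\delta\IB$, so every $\conv(P\cup\beta)$ with $\beta\subset B$ satisfies $\dH(\conv(P\cup\beta),P)\le 2\delta$. Only after this step does the construction stay inside $\HH_C$.

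The separation estimate you sketch is essentially correct and is the paper's argument, but the ``delicate issue'' you defer --- a point of $C_{S'}$ being a convex combination of $P$ with \emph{several} of the $a_n$ at once --- is resolved cleanly by observing that Lemma~\ref{l4.1} forces $\conv(P\cup\beta)=\bigcup_{b\in\beta}\conv(P\cup\{b\})$ for any hidden set $\beta$: if $x\in\conv(P\cup\{b\})$ and $y\in\conv(P\cup\{b'\})$, then $[x,y]$ meets $P$ at some $c$, so any point of $[x,y]$ lies in $\conv(P\cup\{x\})$ or $\conv(P\cup\{y\})$, hence again in one of the single-point hulls. With that decomposition, a point $x\in C_{S'}$ within $\delta$ of $b_m$ lies in some $\conv(P\cup\{b_n\})$ with $n\ne m$, Lemma~\ref{l4.1} puts a point of $P$ on $[x,b_m]\subset b_m+\delta\IB$, and this contradicts $\dist(b_m,P)=\delta$. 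So the separation constant is $\delta$ itself, not merely ``comparable to $\delta$.'' You should supply both the normalization and this union decomposition to make the proof complete.
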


\begin{proof} Since $\HH_C=\HH_P$, we lose no generality assuming that the convex set $C$ itself is positively hiding, which means that there is an infinite subset $A\subset X\setminus C$ on positive distance $\e=\inf_{a\in A}\dist(a,C)$ from $C$, which is hidden behind the set $C$ in the sense that for any distinct points $a,b\in A$ the affine segment $[a,b]$ meets the set $C$.

Fix any point $c_0\in C$ and for every point $a\in A$ choose a point $b_a\in[c_0,a]$ on the distance $\dist(b_a,C)=\e$ from $C$. The choice of the point $b_a$ is possible as $\dist(a,C)\ge\e$. Lemma~\ref{l4.1} guarantees that the set $B=\{b_a:a\in A\}$ is infinite and is hidden behind the set $C$. Moreover, this set lies in the $2\e$-neighborhood $C+2\e\IB$ of $C$. Here $\IB=\{x\in X:\|x\|< 1\}$ stands for the open unit ball in the Banach space $X$.

Now for any subset $\beta\subset B$ consider the convex set $C_\beta=\conv(C\cup\beta)$. Applying Lemma~\ref{l4.1} one can show that this set is closed in $X$ and  $C_\beta=\bigcup_{b\in\beta}\conv(C\cup\{b\})$. Taking into account that $C\subset C_\beta\subset C+2\e\IB$, we see that $\dH(C,C_\beta)\le 2\e$ and hence $C_\beta$ belongs to the component $\HH_C$ of $C$.

We claim that for any two distinct subsets $\alpha,\beta\subset B$, the convex sets $C_\alpha$ and $C_\beta$ lie on the Hausdorff distance $\dH(C_\alpha,C_\beta)\ge\e$. Since $\alpha\ne\beta$, there is a point $b\in (\beta\setminus\alpha)\cup(\alpha\setminus\beta)$. We lose no generality assuming that $b\in\beta\setminus\alpha$. Then $b\in C_\beta$ and $\dist(b,C_\alpha)\ge\e$. Indeed, assuming that $\dist(b,C_\alpha)<\e$, we conclude that the open $\e$-ball $b+\e\IB$ meets the set $C_\alpha=\conv(C\cup\alpha)=\bigcup_{a\in \alpha}\conv(C\cup\{a\})$ at some point $x$ that belongs to $\conv(C\cup\{a\})$ for some point $a\in\alpha$. Since the set $B\ni a,b$ is hidden behind the set $C$, the segment $[a,b]$ meets the set $C$. By Lemma~\ref{l4.1}, the segment $[x,b]$ also meets the set $C$, which is not possible as $[x,b]$ lies in the $\e$-ball $b+\e \IB$, which does not meet $C$ as $\dist(b,C)=\e$.
This contradiction shows that $\dist(b,C_\alpha)\ge\e$ and hence $\dH(C_\beta,C_\alpha)\ge\e$.

Now we see that the component $\HH_C$ contains the subset $\mathcal C=\{C_\beta:\beta\subset B\}$ of cardinality $|\mathcal C|\ge 2^{|B|}\ge\mathfrak c$, consisting of points on mutual distance $\ge \e$. This implies that $\dens(\HH_C)\ge |\mathcal C|\ge \mathfrak c$.
\end{proof}

\section{Recognizing infinitely hiding convex sets}

In this section we develop some tools for recognizing infinitely hiding convex sets.
In fact, we shall work with the following relative version of this property.

Let $C_0,C$ be two convex sets in a Banach space $X$. We shall say that $C_0$ is {\em $C$-infinitely hiding} if $C_0$ hides some infinite set $A\subset\aff(C_0)$ such that $\sup_{a\in A}\dist(a,C)=\infty$.

It is easy to see that a convex set $C\subset X$ is infinitely hiding if and only if it is $C$-infinitely hiding.

We start with the following elementary lemma.

\begin{lemma}\label{l5.1} Let $C\ni 0$ be a convex set in a Banach space and $V_{\bar C}$ be the characteristic cone of its closure. For a linear subspace $Z\subset X$, the intersection $Z\cap V_{\bar C}$ is $C$-infinitely hiding if the cone $Z\cap V_{\bar C}$ is a hiding convex set in $Z$.
\end{lemma}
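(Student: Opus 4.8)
The plan is to unpack the hypothesis and transport the hiding set from $Z$ out into the affine span of the intersection cone. Write $W = Z \cap V_{\bar C}$. By hypothesis $W$ is a hiding convex set in $Z$, so there is an infinite set $A \subset Z \setminus W$ such that $[a,b] \cap W \neq \emptyset$ for all distinct $a,b \in A$. Since $W$ is a cone containing $0$, the rays $\bar\IR_+ a$ for $a \in A$ behave well under scaling, and I want to replace $A$ by a rescaled family $A' = \{t_a a : a \in A\}$ with the scalars $t_a > 0$ chosen to march off to infinity in distance from $C$. The first thing to check is that $A'$ is still hidden behind $W$: this is exactly the content of Lemma~\ref{l4.1} applied inside $Z$ (with $C$ there taken to be $W$ and the points $a = 0 \cdot a + 1 \cdot a$ sitting on the ray from $0$ through $a$), which guarantees that if $[a,b] \cap W \neq \emptyset$ then $[t_a a, t_b b] \cap W \neq \emptyset$ for any $t_a, t_b \in (0,1]$ — and after a harmless rescaling of the original $A$ we may assume all the relevant parameters lie in $(0,1]$, or simply note that the cone structure of $W$ lets us scale up instead.

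The second and main point is to force $\sup_{a \in A'} \dist(a', C) = \infty$ while keeping $A'$ off the cone $W$ itself. Here I would use Lemma~\ref{l2.3}: since $A \cap W = \emptyset$, for each $a \in A$ the vector $a$ lies outside $W = Z \cap V_{\bar C}$; provided $a \notin V_{\bar C}$ we may apply Lemma~\ref{l2.3} with $c_0 = 0 \in C$ and $v = a$ to find, for any prescribed $\e$, a scalar $t > 0$ with $\dist(ta, C) = \e$. Letting $\e$ range over the naturals across the (infinitely many) points of $A$ produces the desired family $A'$ with $\sup \dist(\cdot, C) = \infty$; and $\dist(ta,C) > 0$ in particular forces $ta \notin \bar C \supset W$, so $A' \subset Z \setminus W \subset \aff(W) \setminus W$ as required (note $0 \in W$ so $\aff(W) = \lin(W) \subset Z$, hence $A' \subset \aff(W)$ is fine provided we keep the points in $\lin(W)$; if some $a \in A$ happens to lie in $\lin(W) \setminus W$ but $a \in V_{\bar C}$, then $a$ is already in the cone $V_{\bar C}$, and since $a \in Z$ we'd get $a \in W$, a contradiction — so in fact every $a \in A \subset Z \setminus W$ automatically satisfies $a \notin V_{\bar C}$, and Lemma~\ref{l2.3} always applies).

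Having produced an infinite $A' \subset \aff(W)$ that is hidden behind $W$ — hence behind $C$, since $W \subset V_{\bar C} \subset \bar C$ and one checks that a segment meeting $W$ meets $C$ (here the normalization $0 \in C$ together with $W$ being the characteristic cone of $\bar C$ lets us perturb: any point of $W$ is a limit of points of $C$, and Lemma~\ref{l4.1} absorbs the perturbation; alternatively restrict attention to the case $W \subset C$, which after the shift $0 \in C$ holds because $\bar\IR_+ w \subset \bar C$ and $0 \in C$) — with $\sup_{a' \in A'}\dist(a',C) = \infty$, the set $A'$ witnesses that $W = Z \cap V_{\bar C}$ is $C$-infinitely hiding, which is the claim.

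The step I expect to be the main obstacle is the bookkeeping around whether the hidden points and the hiding cone live relative to $C$ versus relative to $W$, and making sure the rescaling simultaneously (i) preserves the hiding property, (ii) keeps the points out of $W$, and (iii) drives the distance to $C$ to infinity. The cleanest route is probably to first reduce to $W \subset C$ via the shift, then everything takes place with a single convex set $W \subset C \subset \bar C$ and $V_{\bar C} \supset W$, and the only real work is combining Lemma~\ref{l4.1} (to preserve hiding under scaling) with Lemma~\ref{l2.3} (to realize prescribed large distances), diagonalizing over the infinitely many points of $A$.
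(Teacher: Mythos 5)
Your proposal is correct and follows essentially the same route as the paper: take the infinite set hidden behind $W=Z\cap V_{\bar C}$, rescale each point along its ray using Lemma~\ref{l2.3} (valid since, as you check, each $a\in Z\setminus W$ lies outside $V_{\bar C}$) so that the distances to $C$ tend to infinity, and observe that the cone structure of $W$ makes hiding invariant under these rescalings. The paper justifies that last step by a direct computation showing the segment between the rescaled points passes through a positive multiple of the original intersection point, which is exactly your ``scale up using the cone structure'' remark; your detour through Lemma~\ref{l4.1} and the discussion of hiding behind $C$ versus behind $W$ are harmless but unnecessary.
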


\begin{proof} Assume that the cone $Z\cap V_{\bar C}$ hides some infinite injectively enumerated set $\{a_n\}_{n\in\w}\subset Z\setminus V_{\bar C}$. By Lemma~\ref{l2.3}, for every $n\in\w$, there is a real number $t_n>0$ such that $\dist(t_n a_n,C)>n$. It is clear that for the set $A=\{t_na_n\}_{n\in\w}$ we get $\sup\limits_{a\in A}\dist(a,C)=\lim\limits_{n\to\infty}\dist(t_na_n,C)=\infty$. It remains to show that for every distinct numbers $n,m\in\w$ the segment $[t_na_n,t_mb_m]$ intersects the cone $Z\cap V_{\bar C}$.

Since the set $\{a_n,a_m\}\subset A\subset Z$ is hidden behind $Z\cap V_{\bar C}$, the segment $[a_n,a_m]$ meets the cone $Z\cap V_{\bar C}$ at some point $c=\tau a_n+(1-\tau)a_m$ where $\tau\in[0,1]$. Then for the number
$$u=\frac{\tau t_m}{\tau t_m+(1-\tau)t_n}\in[0,1]$$we get
$$
ut_na_n+(1-u)t_ma_m=\frac{t_nt_n}{\tau t_m+(1-\tau)t_n}(\tau a_n+(1-\tau)a_m)=\frac{t_nt_m}{\tau t_m+(1-\tau)t_n}c\in [t_na_n,t_ma_m]\cap V_{\bar C}
$$and hence the intersection $Z\cap V_{\bar C}\cap[t_na_n,t_ma_m]\ni ut_na_n+(1-u)t_ma_m$ is not empty.
\end{proof}

By \cite{BH}, a closed convex subset $C$ of a complete linear metric space $X$ is hiding if and only if $C$ is polyhedral in its closed linear hull. This characterization combined with Lemma~\ref{l5.1} implies:

\begin{lemma}\label{l5.2} Let $C\ni 0$ be a convex set in a Banach space and $V_{\bar C}$ be the characteristic cone of its closure. For a closed linear subspace $Z\subset X$ the intersection $V=Z\cap V_{\bar C}$ is infinitely $C$-hiding if the cone $V$ is polyhedral in its closed linear hull $V^\pm=\cl(V-V)$.
\end{lemma}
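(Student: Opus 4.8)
The plan is to read the lemma off as an immediate consequence of Lemma~\ref{l5.1} combined with the characterization of \emph{hiding} convex sets recalled from \cite{BH} just before the statement: a closed convex subset of a complete linear metric space is hiding if and only if it is polyhedral in its closed linear hull. Since the hypothesis of Lemma~\ref{l5.1} is phrased precisely as the requirement that the cone $V=Z\cap V_{\bar C}$ be a hiding convex set in $Z$, the whole task reduces to applying this characterization to $V$ in the correct ambient space and then chaining the two results together.

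First I would verify that the characterization is applicable to $V$. The cone $V=Z\cap V_{\bar C}$ is closed in $X$, being the intersection of the closed linear subspace $Z$ with the characteristic cone $V_{\bar C}$ of the closed set $\bar C$, which is itself closed by the remarks preceding Lemma~\ref{l2.2n}. Because $V\subset Z$ and $Z$ is a closed linear subspace, the closed linear hull $V^\pm=\cl(V-V)$ satisfies $V^\pm\subset Z$; hence $V$ is a closed convex subset of the Banach space $Z$, the closed linear hull of $V$ computed inside $Z$ is exactly $V^\pm$, and any infinite set witnessing hiding of $V$ inside $V^\pm$ witnesses it inside $Z$ as well. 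Applying the recalled characterization to $V$ regarded as a closed convex subset of the complete linear metric space $Z$, we obtain that $V$ is a hiding convex set in $Z$ if and only if $V$ is polyhedral in its closed linear hull $V^\pm$.

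Consequently, the assumption that $V$ is polyhedral in $V^\pm$ supplies exactly the premise of Lemma~\ref{l5.1}, namely that $V$ hides some infinite subset of $Z\setminus V$. Lemma~\ref{l5.1} then yields that $V=Z\cap V_{\bar C}$ is $C$-infinitely hiding, which is the assertion of the lemma. I expect the only step demanding genuine care to be the bookkeeping of ambient spaces between the two cited statements: the \cite{BH} characterization is stated for an abstract complete linear metric space, whereas Lemma~\ref{l5.1} speaks of hiding specifically in $Z$. The inclusion $V^\pm\subset Z$ reconciles them, so that once this identification is made explicit the two results compose with no further computation.
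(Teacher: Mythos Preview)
Your proposal is correct and follows exactly the paper's own approach: the paper derives Lemma~\ref{l5.2} in a single sentence by combining the characterization from \cite{BH} with Lemma~\ref{l5.1}, and your write-up merely spells out this composition together with the (appropriate) bookkeeping that $V^\pm\subset Z$ so that the two ambient spaces match. One caveat worth recording: both the statement of Lemma~\ref{l5.2} and the sentence preceding it quoting \cite{BH} carry the same typographical slip---the hypothesis should read ``\emph{not} polyhedral in its closed linear hull,'' as is evident from Lemma~\ref{l5.3} and from how Lemma~\ref{l5.2} is invoked in the proof of Lemma~\ref{l5.7}---but since the slip is shared, your chain of implications remains faithful to the intended argument.
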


This lemma implies its absolute version:

\begin{lemma}\label{l5.3} A closed convex subset $C$ of a Banach space is infinitely hiding if its characteristic cone $V_C$ is not polyhedral in its closed linear hull $V_C^\pm=\cl(V_C-V_C)$.
\end{lemma}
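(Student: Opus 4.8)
The plan is to obtain Lemma~\ref{l5.3} as the announced absolute version of Lemma~\ref{l5.2}: I would apply Lemma~\ref{l5.2} with the set $C$ itself (which is closed, hence equal to its own closure) and with the closed linear subspace $Z=V_C^\pm=\cl(V_C-V_C)$.

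First comes a harmless normalization: translating $C$ changes neither $V_C$, nor $V_C^\pm$, nor whether $V_C$ is polyhedral in $V_C^\pm$, nor whether $C$ is closed or infinitely hiding (translations being isometries), so I may assume $0\in C$. Then $\bar C=C$ gives $V_{\bar C}=V_C$, and since $0\in V_C$ and $V_C$ is a convex cone we get $V_C\subseteq V_C-V_C\subseteq\cl(V_C-V_C)=V_C^\pm=:Z$, which is a closed linear subspace. Hence the cone $V:=Z\cap V_{\bar C}$ equals $V_C^\pm\cap V_C=V_C$, and its closed linear hull is $V^\pm=\cl(V-V)=\cl(V_C-V_C)=V_C^\pm=Z$. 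By hypothesis $V_C$ is not polyhedral in $V_C^\pm$, that is, $V$ is not polyhedral in $V^\pm$, so Lemma~\ref{l5.2} yields that $V=V_C$ is infinitely $C$-hiding: there is an infinite set $A\subseteq\aff(V_C)$, hidden behind $V_C$, with $\sup_{a\in A}\dist(a,C)=\infty$.

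The last step is to turn this into an honest witness of infinite hiding for $C$. Since $0\in C$, the very definition of the characteristic cone gives $v\in 0+\bar\IR_+v\subseteq C$ for every $v\in V_C$, i.e. $V_C\subseteq C$; therefore every segment $[a,b]$ with $a,b\in A$ distinct, which meets $V_C$, a fortiori meets $C$, so $A$ is hidden behind $C$ as well. Choosing $a_n\in A$ with $\dist(a_n,C)>n$ for each $n\in\w$, the set $A'=\{a_n:n\in\w\}$ is an infinite subset of $A$ that is still hidden behind $C$ (hiddenness passes to subsets), satisfies $\sup_{n}\dist(a_n,C)=\infty$, and --- because $C$ is closed and each $\dist(a_n,C)>0$ --- lies entirely in $X\setminus C$. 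Thus $C$ is infinitely hiding. I do not expect any real obstacle here: everything past the invocation of Lemma~\ref{l5.2} is bookkeeping, the only genuinely new point being the passage from ``$V_C$ is infinitely $C$-hiding'' to ``$C$ is infinitely hiding'', which rests precisely on the inclusion $V_C\subseteq C$ and on the closedness of $C$.
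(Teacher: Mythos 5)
Your proof is correct and follows exactly the route the paper intends: Lemma~\ref{l5.3} is obtained by instantiating Lemma~\ref{l5.2} with $Z=V_C^{\pm}$ (correctly reading the evidently missing ``not'' in that lemma's statement), and your concluding bookkeeping --- passing from ``$V_C$ is $C$-infinitely hiding'' to ``$C$ is infinitely hiding'' via the inclusion $V_C\subseteq C$ and discarding the points of $A$ at distance $0$ from $C$ --- is precisely the content the paper leaves implicit in the phrase ``this lemma implies its absolute version''. Nothing is missing.
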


Next, we derive the infinite hiding property of a convex set from that property of its projections. We start with the following algebraic fact.

\begin{lemma}\label{l5.4} Let $q:X\to \tilde X$ be a linear operator between linear spaces, $E=q^{-1}(0)$ be the kernel of $q$,  and $C\subset X$ be a convex set such that $V_{C\cap E}-V_{C\cap E}=E$. If the image $\tilde C=q(C)$ hides some countable set $\tilde A\subset \aff(\tilde C)$, then $C$ hides some set $A\subset \aff(C)$ with $q(A)=\tilde A$.
\end{lemma}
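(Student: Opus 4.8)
The plan is to lift the hiding witness $\tilde A\subset\aff(\tilde C)$ from $\tilde X$ back to $X$ one point at a time, using the hypothesis $V_{C\cap E}-V_{C\cap E}=E$ to supply enough ``room inside $C$'' along the kernel directions. Enumerate $\tilde A=\{\tilde a_n\}_{n\in\w}$ injectively and fix an arbitrary point $x_n\in q^{-1}(\tilde a_n)$ for each $n$; these $x_n$ lie in $\aff(C)$ because $\tilde a_n\in\aff(\tilde C)=q(\aff(C))$. The naive choice $A=\{x_n\}$ need not be hidden behind $C$: we only know that for distinct $n,m$ the segment $[\tilde a_n,\tilde a_m]$ meets $\tilde C$, i.e. some convex combination $\tilde c_{n,m}=\tau\tilde a_n+(1-\tau)\tilde a_m$ lies in $\tilde C$; lifting $\tilde c_{n,m}$ to a point of $C$ and comparing with the point $\tau x_n+(1-\tau)x_m$ on $[x_n,x_m]$ produces a discrepancy vector in $E$. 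The idea is to correct the $x_n$ by adding suitable vectors from $V_{C\cap E}$ so that these discrepancies are absorbed.

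First I would set up the bookkeeping. For each pair $n<m$, since $\tilde c_{n,m}=\tau_{n,m}\tilde a_n+(1-\tau_{n,m})\tilde a_m\in\tilde C=q(C)$, pick $c_{n,m}\in C$ with $q(c_{n,m})=\tilde c_{n,m}$; then $e_{n,m}:=c_{n,m}-(\tau_{n,m}x_n+(1-\tau_{n,m})x_m)\in E$. Using $E=V_{C\cap E}-V_{C\cap E}$, write $e_{n,m}=v^+_{n,m}-v^-_{n,m}$ with $v^\pm_{n,m}\in V_{C\cap E}$. The point to exploit is that $V_{C\cap E}$ is a convex cone contained (after translating $C$ so that $0\in C\cap E$, which the hypothesis $C\supset C\cap E\ni$ something allows — more carefully, one first reduces to $0\in C\cap E$ by a shift, legitimate since hiding and affine hulls are shift-invariant and $q$ is linear) in the characteristic cone of $C\cap E$, hence $C+v\subset C$ for every $v\in V_{C\cap E}$ when the base point lies in $C\cap E$; in general $c+v\in C$ for all $c\in C$, $v\in V_{C\cap E}$ once we know $V_{C\cap E}\subset V_C$, which holds because the defining ray condition for $C\cap E$ is weaker than for $C$... actually the clean fact is: $V_{C\cap E}\subset V_C$ need not hold, so instead I would use that $C\cap E$ contains a point $p$ and $p+V_{C\cap E}\subset C\cap E\subset C$, and then enlarge. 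The robust move is: replace each $x_n$ by $a_n:=x_n+w_n$ where $w_n\in V_{C\cap E}$ is chosen (by a greedy/inductive procedure over $n$) large enough that for every $m<n$ the needed correction $e_{n,m}$ is ``dominated'' in the sense that $\tau_{n,m}w_n+(1-\tau_{n,m})w_m+e_{n,m}\in V_{C\cap E}$; this is possible because $V_{C\cap E}-V_{C\cap E}=E$ means every element of $E$ — in particular $-e_{n,m}$ shifted appropriately — becomes a member of $V_{C\cap E}$ after adding a sufficiently ``interior'' cone element, a standard absorption argument for convex cones that span $E$.

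Having made this choice, I would verify that $A:=\{a_n\}_{n\in\w}$ works. Clearly $q(a_n)=q(x_n)+q(w_n)=\tilde a_n$ since $w_n\in E$, so $q(A)=\tilde A$, and $a_n\in\aff(C)$ since $x_n\in\aff(C)$ and $w_n\in E\cap(\aff(C)-\aff(C))$... one checks $a_n\in\aff(C)$ directly from $a_n\in C+(\text{stuff in }\lin(C-C))$. For the hiding property: given $n<m$, the point $\tau_{n,m}a_n+(1-\tau_{n,m})a_m=(\tau_{n,m}x_n+(1-\tau_{n,m})x_m)+(\tau_{n,m}w_n+(1-\tau_{n,m})w_m)=c_{n,m}-e_{n,m}+(\tau_{n,m}w_n+(1-\tau_{n,m})w_m)=c_{n,m}+\big((\tau_{n,m}w_n+(1-\tau_{n,m})w_m)-e_{n,m}\big)$ lies in $c_{n,m}+V_{C\cap E}\subset C$ by the choice of the $w$'s (here I use $c_{n,m}+V_{C\cap E}\subset C$, valid because $c_{n,m}\in C$ and $V_{C\cap E}\subset V_C$ — which I would instead secure by noting $V_{C\cap E}$ consists of recession directions of $C$ as well, since a ray from any point of $C$ in a direction of $V_{C\cap E}$ stays in $C$: this needs $C\cap E\ne\emptyset$ together with convexity, which gives $c+tv\in C$ by combining with the cone structure along $E$). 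Thus $[a_n,a_m]\cap C\ne\emptyset$, so $C$ hides $A$.

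The main obstacle I anticipate is exactly the absorption step: arranging the correction vectors $w_n\in V_{C\cap E}$ simultaneously for all (countably many) pairs, so that every discrepancy $e_{n,m}$ is swallowed into the cone. The hypothesis $V_{C\cap E}-V_{C\cap E}=E$ is the precise algebraic input that makes this possible — it says the cone $V_{C\cap E}$ is ``full'' in $E$ — but turning it into a coherent inductive choice (each $w_n$ must work against all earlier $w_m$, with the convex coefficients $\tau_{n,m}$ varying) requires care: one wants, e.g., a single ``interior'' direction $d\in V_{C\cap E}$ with $d+B_E\subset V_{C\cap E}$ for a spanning bounded set $B_E\subset E$, then take $w_n=\lambda_n d$ with $\lambda_n\to\infty$ fast enough; the existence of such a $d$ follows from $V_{C\cap E}-V_{C\cap E}=E$ by a Baire-category / algebraic-interior argument in the (possibly infinite-dimensional) space $E$, and handling the case where $E$ is infinite-dimensional and $V_{C\cap E}$ has empty interior is the delicate point. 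I would isolate that as the crux and treat it with the relative-interior / spanning-cone machinery, reducing if necessary to finitely many coordinate directions at a time since each individual $e_{n,m}$ lives in a finite-dimensional subspace of $E$.
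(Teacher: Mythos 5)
Your overall architecture matches the paper's: enumerate $\tilde A$, lift inductively, and use elements of the cone $V_{C\cap E}$ to correct the lifts so that the convex combinations witnessing the hiding of $\tilde A$ land in $C$. The bookkeeping with the discrepancies $e_{n,m}$ and the target condition $\tau_{n,m}w_n+(1-\tau_{n,m})w_m-e_{n,m}\in V_{C\cap E}$ is right. But the step you yourself single out as the crux --- actually producing the corrections $w_n$ --- is not carried out, and the mechanism you propose for it fails in general. You suggest finding a single $d\in V_{C\cap E}$ with $d+B_E\subset V_{C\cap E}$ for a bounded spanning set $B_E$ (an algebraic interior point of the cone) and taking $w_n=\lambda_nd$. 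A convex cone $V$ with $V-V=E$ can have empty algebraic interior in $E$: take $E$ to be the space of finitely supported sequences and $V$ the cone of those with non-negative entries; every $d\in V$ has finite support, so $d-te_{k}\notin V$ for every $t>0$ and every index $k$ outside that support. Baire category does not rescue this, since $V_{C\cap E}$ need not be closed. You flag the infinite-dimensional case as ``the delicate point'' but leave it unresolved, so as written there is a genuine gap.

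The gap is closed by an elementary observation that replaces the interior-point idea entirely: if $V$ is a convex cone with $V-V=E$, then finitely many translates $x_1+V,\dots,x_k+V$ whose base points pairwise differ by elements of $E$ have a common point. For $k=2$ write $x_1-x_2=v-w$ with $v,w\in V$; then $x_1+w=x_2+v$ lies in both translates. For larger $k$, replace $x_1+V$ and $x_2+V$ by $u+V$ for such a common point $u$ (note $u+V\subset x_i+V+V=x_i+V$) and induct, since $u-x_3\in V+E=E$. At stage $n$ of your induction only the finitely many constraints coming from $m<n$ are active, and each says exactly that $a_n=x_n+w_n$ must lie in a translate of $V_{C\cap E}$ by a point of the coset $q^{-1}(\tilde a_n)$; the directedness above produces $w_n$. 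This is precisely how the paper argues (it chooses $a_n\in\bigcap_i\big(a_i'+V_{C\cap E}\big)$ for suitable preimages $a_i'$ of $\tilde a_n$), so with this lemma inserted your proof becomes essentially the paper's. One further point to tighten: your justification of $c_{n,m}+V_{C\cap E}\subset C$ wavers between two incompatible claims; what you actually need (and what the paper also uses tacitly) is that $V_{C\cap E}$ consists of recession directions of $C$ itself, and this deserves a clean argument rather than an appeal to ``the cone structure along $E$''.
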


\begin{proof} Let $\tilde A=\{\tilde a_n:n\in\w\}$ be an injective enumeration of the countable set $\tilde A$. By induction, for every $n\in\w$ we shall choose a point $a_n\in q^{-1}(\tilde a_n)\cap\aff(C)$ so that $[a_n,a_m]\cap C\ne\emptyset$ for every numbers $n<m$, and $a_n\in C$ if $\tilde a_n\in \tilde C$.

We start the inductive construction choosing any point $a_0\in q^{-1}(\tilde a_0)\cap\aff(C)$. Such point $a_0$ exists since $q(\aff(C))=\aff(\tilde C)$. If $\tilde a_0\in \tilde C$, then we can additionally assume that $a_0\in C$.  Assume that for some $n\ge 1$ the points $a_0,\dots,a_{n-1}$ have been constructed. We need to choose a point $a_n\in q^{-1}(\tilde a_n)\cap\aff(C)$ so that $[a_i,a_n]\cap C\ne\emptyset$ for all $i<n$. If $\tilde a_n\in\tilde C$, then let $a_n\in C$ be any point with $q(a_n)=\tilde a_n$. So, assume that $\tilde a_n\notin \tilde C$.
Let $I_n=\{i\in \w:i<n,\;\tilde a_i\notin\tilde C\}$.

Since the set $\tilde A\subset\tilde X$ is hidden behind the convex set $\tilde C$, for every $i\in I_n$ the intersection $[\tilde a_i,\tilde a_n]\cap \tilde C$ contains some point $\tilde c_i$ which can be written as the convex combination $\tilde c_i=u_i\tilde a_i+(1-u_i)\tilde a_n$ for some $u_i\in(0,1)$. Since $\tilde c_i\in \tilde C$, there is a point $c_i\in C$ with $q(c_i)=\tilde c_i$. It follows from $\tilde a_n=\frac{\tilde c_i-u_i\tilde a_i}{1-u_i}$ that the point $a_i'=\frac{c_i-u_ia_i}{1-u_i}$ belongs to the preimage $q^{-1}(\tilde a_n)$.

It follows from $E=V_{E\cap C}-V_{E\cap C}$ that the intersection $\bigcap_{i\in I_n}(a_i'+V_{E\cap C})$ contains some point $a_n$. For this point we get $u_ia_i+(1-u_i)a_n\in c_i+V_{C}\subset C$ and hence $[a_i,a_n]\cap C\ne\emptyset$ for all $i<n$, which completes the inductive step.

After completing the inductive construction, we obtain the countable set $A=\{a_n\}_{n\in\w}$ that has the required property.
\end{proof}

Lemma~\ref{l5.4} implies its $C$-infinitely hiding version.

\begin{lemma}\label{l5.5} Let $X$ be a Banach space, $E$ be a closed linear subspace of $X$, $\tilde X=X/E$ be the quotient Banach space and $q:X\to \tilde X$ be the quotient operator. Let $C_0,C$ be two convex sets in $X$ and $\tilde C_0=q(C_0)$, $\tilde C=q(C)$ be their quotient images in $\tilde X$.
The convex set $C_0$ is $C$-infinitely hiding $X$ if its image $\tilde C_0$ is $\tilde C$-infinitely hiding and $E=V_{E\cap C_0}-V_{E\cap C_0}$.
\end{lemma}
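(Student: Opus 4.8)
The plan is to reduce the statement to Lemma~\ref{l5.4} together with the elementary fact that a quotient operator does not increase norms. First I would unwind the hypothesis: assuming that $\tilde C_0$ is $\tilde C$-infinitely hiding, there is an infinite set $\tilde A\subset\aff(\tilde C_0)$ that is hidden behind $\tilde C_0$ and satisfies $\sup_{\tilde a\in\tilde A}\dist(\tilde a,\tilde C)=\infty$. Since Lemma~\ref{l5.4} is stated for \emph{countable} hidden sets, I would first replace $\tilde A$ by a countable subset $\{\tilde a_n\}_{n\in\w}\subset\tilde A$ chosen so that $\dist(\tilde a_n,\tilde C)>n$ for every $n\in\w$; this subset is still hidden behind $\tilde C_0$ (hiddenness passes to subsets), still lies in $\aff(\tilde C_0)$, and keeps $\sup_{n\in\w}\dist(\tilde a_n,\tilde C)=\infty$. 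Relabel this countable set as $\tilde A$.

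Next I would apply Lemma~\ref{l5.4} to the quotient operator $q:X\to\tilde X$, whose kernel is exactly $E$, and to the convex set $C_0$, noting that the algebraic hypothesis $V_{C_0\cap E}-V_{C_0\cap E}=E$ of that lemma is precisely the assumed identity $E=V_{E\cap C_0}-V_{E\cap C_0}$. Since $q$ is linear we have $\aff(\tilde C_0)=\aff(q(C_0))=q(\aff(C_0))$, so the countable set $\tilde A\subset\aff(\tilde C_0)$ hidden behind $\tilde C_0=q(C_0)$ is exactly the kind of data Lemma~\ref{l5.4} consumes. It therefore yields a set $A\subset\aff(C_0)$ that is hidden behind $C_0$ and satisfies $q(A)=\tilde A$. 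In particular $A$ is infinite because $\tilde A$ is.

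It then remains to show $\sup_{a\in A}\dist(a,C)=\infty$, and here I would use that the quotient norm satisfies $\|q(x)\|_{\tilde X}\le\|x\|_X$ for all $x\in X$. Consequently, for any $a\in A$ and any $c\in C$ we get $\|a-c\|_X\ge\|q(a)-q(c)\|_{\tilde X}\ge\dist(q(a),\tilde C)$, since $q(c)\in q(C)=\tilde C$; taking the infimum over $c\in C$ gives $\dist(a,C)\ge\dist(q(a),\tilde C)$. Hence $\sup_{a\in A}\dist(a,C)\ge\sup_{a\in A}\dist(q(a),\tilde C)=\sup_{\tilde a\in\tilde A}\dist(\tilde a,\tilde C)=\infty$, so the infinite set $A\subset\aff(C_0)$ witnesses that $C_0$ is $C$-infinitely hiding. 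There is no real obstacle in this argument: all the substance sits in Lemma~\ref{l5.4}, and the only two points needing care are the harmless passage to a countable subset and the observation that $q$ is norm-nonincreasing, so that distances can only grow when points are lifted from $\tilde X$ back to $X$.
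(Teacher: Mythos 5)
Your argument is correct and matches the paper's own proof of Lemma~\ref{l5.5} essentially verbatim: apply Lemma~\ref{l5.4} to lift the hidden set through $q$, then use the fact that the quotient operator has norm $\|q\|\le 1$ so that $\dist(a,C)\ge\dist(q(a),\tilde C)$. The only addition is your explicit passage to a countable subset of $\tilde A$ (so that Lemma~\ref{l5.4}, stated for countable sets, applies), a harmless refinement the paper leaves implicit.
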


\begin{proof} If $\tilde C_0$ is $\tilde C$-infinitely hiding, then $\tilde C_0$ hides some infinite set $\tilde A\subset \aff(\tilde C_0)$ such that $\sup_{\tilde a\in \tilde A}\dist(\tilde a,\tilde C)=\infty$. By Lemma~\ref{l5.4}, there is a set $A\subset \aff(C_0)$ with $q(A)=\tilde A$, hidden behind the set $C$.

Since the quotient operator $q:X\to \tilde X$ has norm $\|q\|\le 1$, for every point $a\in A$ and its image $\tilde a=q(a)$,  we get $\dist(\tilde a,\tilde C)\le \dist(a,C)$. Consequently, $\sup_{a\in A}\dist(a,C)\ge \sup_{\tilde a\in \tilde A}\dist(\tilde a,\tilde C)=\infty$, which means that the set  $C_0$ is $C$-infinitely hiding.
\end{proof}

The preceding lemma allows us to derive the $C$-infinite hiding property of a convex set from that property of its projection. Our next lemma will help us to do the same using the $C$-infinite hiding property of two-dimensional sections of the convex set.

\begin{lemma}\label{l5.6} Let $C$ be a closed convex subset of a Banach space $X$ and $Z$ be a two-dimensional linear subspace of $X$ such that the convex set $C\cap Z$ has non-empty interior $C_0$ in $Z$, which contains zero. If $\dH(C_0,V_{C_0})=\infty$, then the convex set $C_0$ is $C$-infinitely hiding.
\end{lemma}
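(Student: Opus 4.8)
The plan is to reduce the two-dimensional case to the ``hidden'' characterization of polyhedrality in the plane (Theorem from \cite{BH}, as used already in Lemma~\ref{l5.2} and Lemma~\ref{l5.3}). Since $C_0$ is an open convex subset of the two-dimensional space $Z$ containing zero, Lemma~\ref{l2.2n}(2) gives $V_{C_0}=V_{\bar{C_0}}$, where $\bar{C_0}$ is the closure of $C_0$ in $Z$; and since $\dH(C_0,V_{C_0})=\infty$, the closed convex set $\bar{C_0}$ is not on finite Hausdorff distance from its characteristic cone, so $\bar{C_0}$ cannot be polyhedral in $Z$ (a polyhedral convex set lies on finite Hausdorff distance from its characteristic cone, by Lemma~\ref{l2.6}, combined with Lemma~\ref{l2.1}). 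Hence, by the characterization from \cite{BH}, the closed convex set $\bar{C_0}$ is hiding in its closed linear hull inside $Z$; since $C_0$ has non-empty interior in $Z$, that linear hull is all of $Z$, so $\bar{C_0}$ hides some infinite set $\tilde A\subset Z\setminus \bar{C_0}$.

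Next I would transfer this from $\bar{C_0}$ to $C_0$ itself. The points of $\tilde A$ lie outside $\bar{C_0}$, hence outside $C_0$, and for distinct $a,b\in\tilde A$ the segment $[a,b]$ meets $\bar{C_0}$. To replace $\bar{C_0}$ by the open set $C_0$ in the hiding condition, note that $[a,b]$ is a segment with endpoints outside the \emph{closed} convex set $\bar{C_0}$ whose interior is $C_0$; if $[a,b]$ meets $\bar{C_0}$ in more than one point, or meets its interior, we are done directly, and the only delicate case is when $[a,b]$ is tangent to $\bar{C_0}$ at a single boundary point. This can be handled by a small perturbation: replace each $a_n\in\tilde A$ by a nearby point $a_n'$ on the ray from $0$ through $a_n$, far enough out that $\dist(a_n',C)>n$ (possible because $a_n\notin V_{\bar{C_0}}\supset$ the characteristic cone relevant for Lemma~\ref{l2.3}, so Lemma~\ref{l2.3} applies inside $X$ with $C$ in place of $C_0$), exactly as in the proof of Lemma~\ref{l5.1}; the same convex-combination computation used there shows the perturbed segments still meet the cone $V_{C_0}\subset \bar{C_0}\subset C$, and since $0\in C_0=\mathrm{int}_Z(C\cap Z)$ one upgrades ``meets $\bar{C_0}$'' to ``meets $C_0\subset C$'' by moving very slightly inward. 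Thus $C_0$ hides an infinite set $A=\{a_n'\}\subset\aff(C_0)=Z$ with $\sup_{a\in A}\dist(a,C)=\infty$, which is precisely the assertion that $C_0$ is $C$-infinitely hiding.

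Actually the cleanest route is probably to invoke Lemma~\ref{l5.1} (or Lemma~\ref{l5.2}) almost verbatim. Those lemmas already take a cone $Z\cap V_{\bar C}$ that hides an infinite subset of $Z$ and conclude $C$-infinite hiding; the only difference here is that we start not from the cone hiding a set but from $\bar{C_0}$ hiding a set. So the real content I must supply is: the hidden set $\tilde A$ for $\bar{C_0}$ can be chosen so that the segments between its points meet not just $\bar{C_0}$ but the sub-cone $Z\cap V_{\bar{C_0}}=V_{C_0}$, after which Lemma~\ref{l5.1}'s scaling argument finishes the job. This is standard in the theory: a bounded-away-from-zero direction-spread family of points whose connecting segments meet a non-polyhedral planar cone can always be produced when the cone is non-polyhedral (equivalently, has infinitely many extreme rays, or a ``round'' boundary arc), and this is exactly what \cite{BH} gives.

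The main obstacle I expect is the passage from ``$[a,b]$ meets the closed set $\bar{C_0}$'' to ``the connecting segment meets the \emph{cone} $V_{C_0}$ (hence meets $C$)'', i.e.\ making sure the hidden set can be taken ``behind the recession cone'' rather than merely ``behind $\bar{C_0}$''. Once that is arranged, the scaling/convex-combination bookkeeping is identical to Lemma~\ref{l5.1} and the distance blow-up is immediate from Lemma~\ref{l2.3}. I would therefore organize the write-up as: (i) $\bar{C_0}$ is non-polyhedral in $Z$ by Lemma~\ref{l2.6}; (ii) by \cite{BH} it hides an infinite $\tilde A\subset Z\setminus \bar{C_0}$ whose connecting segments can be taken to meet the cone $V_{C_0}$; (iii) apply the scaling argument of Lemma~\ref{l5.1} to get $A\subset Z=\aff(C_0)$ with $\sup_{a\in A}\dist(a,C)=\infty$ and all connecting segments meeting $V_{C_0}\subset C$; conclude $C_0$ is $C$-infinitely hiding.
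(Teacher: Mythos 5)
Your reduction has a genuine gap at exactly the point you flag as ``the main obstacle,'' and that obstacle cannot be overcome by the route you propose. You want to arrange the infinite set $\tilde A$ hidden behind $\bar{C_0}$ so that the connecting segments meet the \emph{cone} $V_{C_0}$, and then rescale as in Lemma~\ref{l5.1}. But the hypothesis $\dH(C_0,V_{C_0})=\infty$ is perfectly compatible with $V_{C_0}$ being a polyhedral cone, and this is in fact the typical case the lemma must handle: for the parabola $C_0=\{(x,y):y>x^2\}$ one has $V_{C_0}=\bar\IR_+\cdot(0,1)$, a single ray, which by \cite{BH} hides no infinite set at all. So no infinite set whose connecting segments all meet $V_{C_0}$ exists, and step (ii) of your outline cannot be carried out; your assertion that such a set ``can always be produced when the cone is non-polyhedral'' does not apply because the cone need not be non-polyhedral (it is $\bar{C_0}$ that is non-polyhedral). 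The scaling computation of Lemma~\ref{l5.1} genuinely needs the hiding set to be a cone: the scaled segment $[t_na_n,t_ma_m]$ meets the ray through the old intersection point $c$ at the dilate $\lambda c$ with $\lambda=\frac{t_nt_m}{\tau t_m+(1-\tau)t_n}\ge\min(t_n,t_m)\to\infty$, and for a closed convex set containing $0$ the points $\lambda c$ remain inside for arbitrarily large $\lambda$ only when $c\in V_{\bar C_0}$. Hence hiding behind $\bar{C_0}$ does not survive the rescaling, and hiding behind $V_{C_0}$ is in general impossible. Moreover, the theorem of \cite{BH} gives no control on $\dist(\tilde a,C)$ for the hidden set it produces, so without the (unavailable) conical structure there is no mechanism to push the points out to infinite distance.

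The paper's proof avoids this entirely by constructing the hidden set directly. It distinguishes the cases $V_{C_0}=\cone(\{e_1\})$ and $V_{C_0}=\cone(\{e_1,e_2\})$, uses $\dH(C_0,V_{C_0})=\infty$ to arrange $\inf e_2^*(C_0)=-\infty$, and builds by induction a sequence $(a_n)$ in the sector $\cone(\{e_1,-e_2\})$ with the slopes $|e_2^*(a_n)|/e_1^*(a_n)$ strictly decreasing: each $a_{n+1}$ is taken on a ray $c+\IR_+(c-a_n)$ for a point $c\in C_0$ chosen far out in the $-e_2$ direction, with $\dist(a_{n+1},C)>n+1$ secured by Lemma~\ref{l2.3}, and a plane-geometry argument with the triangle $\conv(\{0,c',c\})\subset C_0$ shows that all earlier segments $[a_k,a_{n+1}]$ still meet $C_0$. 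The set is hidden behind $C_0$ itself rather than behind its recession cone, and the distance blow-up is built into the induction rather than obtained by rescaling a pre-existing hidden set.
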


\begin{proof} The equality $\dH(C_0,V_{C_0})=\infty$ implies that
the open convex subset $C_0$ of the plane $Z$ is not bounded. Consequently, its characteristic cone $V_{C_0}=V_{\bar C_0}=Z\cap V_C$ is unbounded too. Moreover, the cone $V_{C_0}$ is not a plane, not a half-plane, and not a line (otherwise $C_0$ would be on finite Hausdorff distance from its characteristic cone $V_{C_0}$). Consequently, we can choose two linearly independent vectors $e_1,e_2\in Z$ such that the cone $V_{C_0}$ is equal to $\cone(\{e_1\})$ or to  $\cone(\{e_1,e_2\})$. Let $e_1^*,e_2^*\in Z^*$ be the coordinate functionals corresponding to the base $e_1,e_2$ of $Z$. This means that $z=e_1^*(z)e_1+e_2^*(z)e_2$ for each vector $z\in Z$.

If $V_{C_0}=\cone(\{e_1,e_2\})$, then the equality $\dH(C_0,V_{C_0})=\infty$ implies that $\inf e_1^*(C_0)=-\infty$ or $\inf e_2^*(C_0)=-\infty$. We lose no generality assuming that $\inf e_2^*(C_0)=-\infty$.

If $V_{C_0}=\cone(\{e_1\})$, then the equality $\dH(C_0,V_{C_0})=\infty$ implies that $\inf e_2^*(C_0)=-\infty$ or $\sup e_2^*(C_0)=+\infty$. Changing $e_2$ to $-e_2$, if necessary, we can assume that $\inf e_2^*(C_0)=-\infty$.

So, in both cases we can assume that $\inf e_2^*(C_0)=-\infty$.

By induction, we shall construct a sequence of points $(a_n)_{n\in\w}$ in $\cone(\{e_1,-e_2\})$ such that for every $n\in\w$ the following conditions are satisfied:
\begin{enumerate}
\item $\dist(a_n,C)>n$;
\item $e_1^*(a_{n})>e_1^*(a_{n-1})>0$, $e_2^*(a_n)<e_2^*(a_{n-1})<0$, and $\dfrac{|e_2^*(a_n)|}{e_1^*(a_n)}<\dfrac{|e_2^*(a_{n-1})|}{e_1^*(a_{n-1})}$;
\item $[a_n,a_{k}]\cap C_0\ne\emptyset$ for all $k<n$.
\end{enumerate}

We start the inductive construction selecting a point $a_0\in\IR_+\cdot(e_1-e_2)$ on the distance $\dist(a_0,C)>0$ from the set $C$. Such point $a_0$ exists because $e_1-e_2\notin V_{C_0}=Z\cap V_C$. Now assume that for some $n\in\IN$ we have constructed points $a_0,\dots,a_n\in\cone(\{e_1,-e_2\})$ satisfying the conditions (1)--(3). It follows from $\inf e_2^*(C_0)=-\infty$ and $e_1\in V_{C_0}\setminus(-V_{C_0})$ that there exists a point $c\in C_0$ such that $$e^*_1(c)>c_1^*(a_n),\;\;e^*_2(c)<e^*_2(a_n)\mbox{ \ and \ }\frac{|e^*_2(c)|}{e^*_1(c)}<\frac{|e^*_2(a_n)|}{e^*_1(a_n)}.$$
Now consider the vector $v=c-a_n$ and observe that $v\notin V_{C_0}=V_{\bar C_0}$. Consequently, $c+\IR_+v\not\subset \bar C_0$, which allows us to find a point $a_{n+1}\in c+\IR_+ v$ with $\dist(a_{n+1},C)>n+1$ (using Lemma~\ref{l2.3}). It can be shown that the point $a_{n+1}$ satisfies the condition (2). Since the segment $[a_n,a_{n+1}]$ contains the point $c$, it  meets the set $C\cap Z$.

It remains to check that $[a_k,a_{n+1}]\cap C_0\ne\emptyset$ for every number $k<n$. By the inductive assumption, the segment $[a_k,a_n]$ meets the set $C_0$ at some point $c'$.

\begin{picture}(300,190)(-30,25)
\put(50,150){\circle*{3}}
\put(50,150){\vector(1,0){300}}
\put(345,140){$e_1$}
\put(50,150){\vector(0,1){50}}
\put(37,195){$e_2$}
\multiput(50,150)(5,0){55}{\line(1,1){10}}
\multiput(50,150)(0,5){8}{\line(1,1){10}}
\put(80,180){$V_{Z\cap C}$}
\put(45,140){$0$}

\put(50,150){\line(3,-2){77}}
\put(127,99){\circle*{3}}
\put(123,88){$c'$}
\put(127,99){\line(5,-2){51}}
\put(127,99){\line(-5,2){44}}
\put(83,116.8){\circle*{3}}
\put(77,108){$a_k$}
\put(178.3,78.7){\circle*{3}}
\put(174,70){$a_n$}


\put(50,150){\line(5,-2){215}}
\put(265,64){\circle*{3}}
\put(262,55){$c$}
\put(265,64){\line(-4,1){138}}
\put(350,50){\line(-4,1){268}}
\put(350,50){\line(-6,1){172}}
\put(350,50){\circle*{3}}
\put(345,41){$a_{n+1}$}

\end{picture}

 By an elementary plane geometry argument one can prove that the segment $[a_k,a_{n+1}]$ meets the triangle $\conv(\{0,c',c\})\subset C_0$ and hence meets the set $C_0$.
This completes the inductive step.

After completing the inductive construction, we obtain the infinite set $A=\{a_n\}_{n\in\w}$ with $\sup_{a\in A}\dist(a,C)=\infty$ which is hidden behind $C_0$. This means that the set $C_0$ is $C$-infinitely hiding.
\end{proof}

\begin{lemma}\label{l5.7}
Let $C$ be a convex subset of a Banach space $X$ and $Z$ be a finite-dimensional linear subspace of $X$ such that the convex set $C\cap Z$ has non-empty interior $C_0$ in $Z$ and $0\in C_0$. If $\dH(C_0,V_{C_0})=\infty$, then the convex set $C_0$ is $C$-infinitely hiding.
\end{lemma}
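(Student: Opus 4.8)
The plan is to prove the lemma by induction on $n=\dim Z$. Put $V=V_{C_0}$; since $C_0$ is open in $Z$ we have $V=V_{\bar C_0}=Z\cap V_C$, a closed convex cone in $Z$, and $V\subset C_0$ and $\aff(C_0)=Z$ follow from $0\in C_0$. If $n\le 1$ the hypothesis $\dH(C_0,V)=\infty$ cannot hold (a convex subset of a line lies at finite Hausdorff distance from its characteristic cone), so the lemma is vacuous; if $n=2$ it is exactly Lemma~\ref{l5.6} (which is where closedness of $C$ is used). Assume $n\ge 3$. First, if $V$ is \emph{not} polyhedral in $Z$, then it is not polyhedral in its linear hull either, so by Lemma~\ref{l5.2} the cone $V=Z\cap V_C$ is infinitely $C$-hiding: it hides an infinite set $A\subset\aff(V)\subset Z=\aff(C_0)$ with $\sup_{a\in A}\dist(a,C)=\infty$ and $[a,b]\cap V\ne\emptyset$ for distinct $a,b\in A$; as $V\subset C_0$, the same set $A$ is hidden behind $C_0$, so $C_0$ is $C$-infinitely hiding. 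Hence from now on $V$ is polyhedral in $Z$.

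Second, suppose the polyhedral cone $V$ contains a line $E$. Then $E$ lies in the lineality space $V\cap(-V)$, whence $E\subset C_0$ and $V_{E\cap C_0}-V_{E\cap C_0}=E$. Let $q\colon X\to X/E$ be the quotient operator. Since $E\subset V_C$, Lemma~\ref{l3.3}(1) gives that $qC$ is closed; moreover $qC_0$ is the interior of $qC\cap qZ$ in the $(n-1)$-dimensional space $qZ$, $0\in qC_0$, $q(V)=V_{qC_0}$, and $\dH(qC_0,V_{qC_0})=\dH(C_0,V)=\infty$ by Lemma~\ref{l3.3}(3). By the induction hypothesis $qC_0$ is $qC$-infinitely hiding, and then Lemma~\ref{l5.5} yields that $C_0$ is $C$-infinitely hiding.

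The remaining case — $V$ polyhedral and pointed, $n\ge 3$ — is the crux. The plan is to produce a two-dimensional linear subspace $Z'\subset Z$ along which $C_0$ still recedes away from its characteristic cone, i.e. $\dH(C_0\cap Z',V_{C_0\cap Z'})=\infty$, and then to apply Lemma~\ref{l5.6} to $Z'$. The passage from the plane back to $C_0$ is then automatic: since $0\in C_0=\mathrm{int}_Z(C\cap Z)$, no supporting hyperplane of $C\cap Z$ in $Z$ passes through $0$, and an elementary supporting-hyperplane argument shows $\mathrm{int}_{Z'}(C\cap Z')=C_0\cap Z'$ for every subspace $Z'\subset Z$; consequently the infinite set hidden behind $C_0\cap Z'$ that Lemma~\ref{l5.6} produces is hidden behind $C_0$ as well, still with supremum of distances to $C$ equal to $\infty$. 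To locate $Z'$, fix a Euclidean structure on $Z$ and write $V=\bigcap_i f_i^{-1}((-\infty,0])$; by Lemma~\ref{l2.6}, $\dH(C_0,V)=\infty$ forces $\sup_{c\in C_0}f_{i_0}(c)=\infty$ for some $i_0$ (otherwise $C_0$ would lie in a polyhedron at finite Hausdorff distance from $V$). Picking $c_m\in C_0$ with $\dist(c_m,V)\to\infty$ and passing to subsequences, $c_m/\|c_m\|\to u$ for an asymptotic recession direction $u\in V$, while the normalized transverse offsets $(c_m-\langle c_m,u\rangle u)/\|c_m-\langle c_m,u\rangle u\|\to w$ for a unit vector $w\perp u$; one then sets $Z'=\lin(u,w)$. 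I expect the main obstacle of the whole proof to be exactly the verification that this $Z'$ — or a more carefully chosen substitute — genuinely satisfies $\dH(C_0\cap Z',V_{C_0\cap Z'})=\infty$: one has to show that the ``curving'' of $C_0$ away from the pointed polyhedral cone $V$ survives restriction to the plane spanned by the asymptotic and transverse directions, which comes down to a quantitative comparison of $\dist(c_m,V)$ with the distances of $c_m$ to the ray $\IR_+u$ and to $Z'$ (ensuring the transverse growth is not swallowed by another face of $V$) — the one genuinely delicate point, possibly requiring a better base point and transverse direction.
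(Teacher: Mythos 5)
Your treatment of the base cases, of the case where $V=V_{C_0}$ is not polyhedral (via Lemma~\ref{l5.2}), and of the case where $V$ contains a line (quotient by the line, then use Lemma~\ref{l3.3}, the inductive hypothesis and Lemma~\ref{l5.5}) agrees in substance with the paper's proof. But the crux case --- $V$ pointed and polyhedral, $n\ge 3$ --- is exactly where your argument stops being a proof. You propose to find a two-dimensional subspace $Z'\subset Z$ with $\dH(C_0\cap Z',V_{C_0\cap Z'})=\infty$ and feed it to Lemma~\ref{l5.6}, but you never establish that such a plane exists, and you flag this yourself as the unresolved obstacle. The candidate $Z'=\lin(\{u,w\})$ built from an asymptotic direction $u$ and a limiting transverse offset $w$ does not obviously work: the witnesses $c_m$ with $\dist(c_m,V)\to\infty$ need not accumulate near any fixed plane (even after passing to subsequences their distances to $\lin(\{u,w\})$ can tend to infinity), and convexity alone does not force the planar section $C_0\cap Z'$ to inherit the unbounded transverse growth, since that section only sees convex combinations of points of $C$ that happen to straddle $Z'$, and these can lie much closer to $V$ than the $c_m$ themselves. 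So the induction step is missing its entire content.

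The paper's proof is organized precisely so as to avoid having to exhibit a bad planar section. It argues by contradiction: assuming $C_0$ is not $C$-infinitely hiding, Lemma~\ref{l5.6} yields the \emph{negative} information that \emph{every} two-dimensional section satisfies $\dH(Z_2\cap C_0,V_{Z_2\cap C_0})<\infty$ (Claim~\ref{cl5.8}), and Lemma~\ref{l5.2} yields polyhedrality of $V$. One then uses Lemma~\ref{l2.6} to pick a functional $f_i$ from a minimal representation of $V$ with $\sup f_i(C_0)=\infty$, finds a nonzero vector $e$ in the face $f_i^{-1}(0)\cap V$ (Claim~\ref{cl5.11}, which needs $\dim V\ge 2$, itself established by a separate quotient argument), and quotients by $\IR e$; Claim~\ref{cl5.8} applied to planes of the form $\lin(\{v,e\})$ is what controls the recession cone of the quotient and gives $\dH(\tilde C_0,V_{\tilde C_0})=\infty$, so the inductive hypothesis applies one dimension lower and Lemma~\ref{l5.4} lifts the hidden set back up to $X$. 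To salvage your direct approach you would have to prove outright that a section at infinite Hausdorff distance from its characteristic cone exists; that is a strictly stronger statement than anything the paper establishes and would require a genuinely new argument.
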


\begin{proof} This lemma will be proved by induction on the dimension $\dim(Z)$ of $Z$. The lemma is trivially true if $\dim(Z)\le 1$. Assume that the lemma has been proved for all  triples $(X,C,Z)$ with $\dim(Z)<n$. Now we prove this lemma for $\dim(Z)=n$. Assuming that $\dH(C_0,V_{C_0})=\infty$, we need to prove that the convex set $C_0$ is $C$-infinitely hiding. To derive a contradiction, assume that $C_0$ is not $C$-infinitely hiding. In this case Lemma~\ref{l5.6} implies the following fact, which will be used several times in the subsequent proof.

\begin{claim}\label{cl5.8} For each two-dimensional linear subspace $Z_2\subset Z$ we get $\dH(Z_2\cap C_0,V_{Z_2\cap C_0})<\infty$.
\end{claim}

Now consider the characteristic cone $V_{C_0}=V_{\bar C_0}$ of the open convex set $C_0$ in $Z$.

\begin{claim} The linear subspace $-V_{C_0}\cap V_{C_0}$ is trivial.
\end{claim}

\begin{proof} Assume that the linear subspace $E=-V_{C_0}\cap V_{C_0}$ is not equal to $\{0\}$. Consider the quotient Banach space $\tilde X=X/E$, the quotient operator $q:X\to \tilde X$, the convex set $\tilde C=q(C)$, and the finite-dimensional subspace $\tilde Z=q(Z)$ of dimension $\dim(\tilde Z)<\dim(Z)=n$. Since the quotient operator $q|Z:Z\to\tilde Z$ is open, the convex set $\tilde C_0=q(C_0)$ is open in $\tilde Z$ and hence the convex set $\tilde Z\cap\tilde C$ has non-empty interior in $\tilde Z$. Since $E\subset V_C$, the set $\tilde C$ is closed in the Banach space $\tilde X$ by Lemma~\ref{l3.3}(1). Now we can see that the triple $(\tilde X,\tilde C,\tilde Z)$ satisfies the requirements of Lemma~\ref{l5.7} with $\dim(\tilde Z)<\dim(Z)=n$. So, by the inductive assumption, the convex set $\tilde C_0$ is $\tilde C$-infinitely hiding.

Since $E=V_{E\cap C_0}=V_{E\cap C_0}-V_{E\cap C_0}$, we can apply Lemma~\ref{l5.4} to conclude that the open convex  set $C_0$ is $C$-infinitely hiding in $X$. But this contradicts our assumption.
\end{proof}

\begin{claim} $\dim(V_{C_0})\ge 2$.
\end{claim}

\begin{proof} Assume conversely that $\dim(V_{C_0})\le 1$. The equality $\dH(C_0,V_{C_0})=\infty$ implies that the open convex subset $C_0$ is unbounded in the finite-dimensional linear space $Z\cap C_0$ and consequently $V_{C_0}\ne 0$. Since $-V_{C_0}\cap V_{C_0}=\{0\}$, we conclude that $V_{C_0}=\bar\IR_+ e$ for some non-zero vector $e\in V_{C_0}$. Now consider the linear subspace $E=\IR e\subset Z$, the quotient Banach space $\tilde X=X/E$, the finite-dimensional linear subspace $\tilde Z=q(Z)$, the convex sets $\tilde C=q(C)$, and the open convex set $\tilde C_0=q(C_0)$, which is dense in $\tilde C$. Claim~\ref{cl5.8} guarantees that the set $\tilde C_0$ has trivial characteristic cone and hence is bounded in the finite-dimensional space $\tilde Z$. This implies that $\dH(C_0,V_{C_0})<\infty$, which is a desired contradiction.
\end{proof}

Since $C_0$ is not $C$-infinitely hiding, Lemma~\ref{l5.2}, guarantees that the characteristic cone $V_{C_0}$ is polyhedral in $Z$ and hence it can be written as a finite intersection of closed half-spaces $$V_{C_0}=\bigcap_{i=1}^k f_i^{-1}\big((-\infty,0]\big)$$determined by some linear functionals $f_1,\dots,f_k:Z\to\IR$. We shall assume that the number $k$ in this representation is the smallest possible.

It follows from $\dH(C_0,V_{C_0})=\infty$ and Lemma~\ref{l2.6} that $\sup f_i(C_0)=\infty$ for some $i\le k$.

\begin{claim}\label{cl5.11} The face $f_i^{-1}(0)\cap V_{C_0}$ of $V_{C_0}$ contains a non-zero vector $e$.
\end{claim}

\begin{proof} By the minimality of $k$ the cone $V=\bigcap\{f^{-1}_j\big((-\infty,0]\big):1\le j\le k,\;\;j\ne i\}$ is strictly larger than $V_{C_0}$ and hence contains a point $x\in V\setminus V_{C_0}$. For this point $x$ we get $f_j(x)\le 0$ for all $j\ne i$, and $f_i(x)>0$.

Since $\dim(V_{C_0})\ge 2$, there exists a vector $y\in V_{C_0}\setminus\IR x$. Such choice of $x$ guarantees that $0\notin [x,y]$. Since $f_i(y)\le 0$ and $f_i(x)>0$, there is a point $e\in [x,y]$ with $f_i(e)=0$.
For every $j\ne i$, the inequalities $f_j(x)\le 0$ and $f_j(y)\le 0$ imply $f_j(e)\le 0$. Conseqeuntly, $e$ is a required non-zero vector in $f_i^{-1}(0)\cap V_{C_0}$.
\end{proof}

Claim~\ref{cl5.11} yields a non-zero vector $e\in f_i^{-1}(0)\cap V_{C_0}$. Consider the 1-dimensional linear subspace $E=\IR e$ of $X$ and let $\tilde X=X/E$ be the quotient Banach space. Observe that $\tilde X$  contains the finite-dimensional linear subspace $\tilde Z=Z/E$ of dimension $\dim(\tilde Z)=\dim(Z)-1<n$. Let $q:X\to\tilde X$ be the quotient operator, and $\tilde C_0=q(C_0)$, $\tilde C=q(C)$ be the images of the convex sets $C_0$ and $C$ in $\tilde X$. It follows from $E=q^{-1}(0)\subset Z$ that $\tilde Z\cap\tilde C=q(Z\cap C)$ and $\tilde C_0=q(C_0)$ coincides with the interior of the set $q(Z\cap C)=\tilde Z\cap\tilde C$ in $\tilde Z$. So, the triple $(\tilde X,\tilde Z,\tilde C)$ satisfies the assumptions of the lemma.

We claim that $\dH(\tilde C_0,V_{\tilde C_0})=\infty$.
Since $E\subset f_i^{-1}(0)$, there is a linear functional $\tilde f_i:\tilde Z\to \IR$ such that $f_i=\tilde f_i\circ q|Z$.

\begin{claim} $V_{\tilde C_0}\subset \tilde f_i^{-1}(-\infty,0]$.
\end{claim}

\begin{proof} Assume conversely that the characteristic cone $V_{\tilde C_0}$ contains some vector $w\in \tilde Z$ with $\tilde f_i(w)>0$. Then $\IR_+w\subset \tilde Z$. Pick any vector $v\in q^{-1}(w)\subset Z$ and consider the two-dimensional subspace $Z_2=\lin(\{v,e\})$ spanning the vectors $v$ and $e$. Observe that for every $t\in\IR$ we get  $f_i(v+te)=\tilde f_i(w)>0$, which implies that $(v+t\IR)\cap V_{Z_2\cap C}=\emptyset$. Then  $V_{Z_2\cap C}\subset \IR e-\bar\IR_+v$ and $q(V_{Z_2\cap C})\subset -\bar\IR_+w$. On the other hand, the projection $q(Z_2\cap C)$ contains the half-line $\IR_+ w$, which implies that $\dH(Z_2\cap C,V_{Z_2\cap C})=\infty$. But this contradicts Claim~\ref{cl5.8}.
\end{proof}

Taking into account that $\infty=\sup f_i(C_0)=\sup \tilde f_i(\tilde C_0)$, we conclude that $\dH(\tilde C_0,V_{\tilde C_0})=\infty$ and by the inductive assumption, the open convex set $\tilde C_0$ is $\tilde C$-infinitely hiding (as $\dim(\tilde Z)<\dim(Z)=n$).
Since $E=\IR_+e-\IR_+e=V_{E\cap C_0}-V_{E\cap C_0}$, we can apply Lemma~\ref{l5.4} to conclude that the open convex  set $C_0$ is $C$-infinitely hiding in $X$. This contradiction completes the proof of Lemma~\ref{l5.7}.
\end{proof}

Lemma~\ref{l5.7} admits the final (and main) lemma of this section.

\begin{lemma}\label{l5.8} A closed convex subset $C$ of a Banach space $X$ is infinitely hiding if  $\dH(A\cap C,A\cap V_C)=\infty$ for some finite-dimensional affine subspace $A\subset X$.
\end{lemma}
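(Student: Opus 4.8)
The plan is to deduce the statement from Lemma~\ref{l5.7}. We assume that both $A\cap C$ and $A\cap V_C$ are non-empty --- this is the only case in which the hypothesis is meaningful, and it holds in particular whenever $A$ contains the origin, since $0\in V_C$. I would fix $d_0\in A\cap V_C$, write $L:=A-A$ for the direction subspace of $A$ (so that $A=d_0+L$), and set $D:=A\cap V_C$; since $V_C$ is a closed convex cone and the recession cone of a closed convex set does not depend on the base point, the recession cone of $D$ equals $L\cap V_C$. The argument then splits according to whether $\dH\big(D,\,d_0+(L\cap V_C)\big)$ is finite or infinite, i.e.\ according to whether the cross-section $D$ of the cone $V_C$ is asymptotically conical.

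If $\dH\big(D,\,d_0+(L\cap V_C)\big)=\infty$, I would argue that $V_C$ is \emph{not} polyhedral in its closed linear hull $V_C^{\pm}=\cl(V_C-V_C)$, so that Lemma~\ref{l5.3} immediately gives that $C$ is infinitely hiding. Indeed, were $V_C$ polyhedral in $V_C^{\pm}$, one could write $V_C=V_C^{\pm}\cap\bigcap_{i=1}^{k}g_i^{-1}\big((-\infty,0]\big)$ with $g_1,\dots,g_k:X\to\IR$ continuous linear (extending the defining functionals from $V_C^{\pm}$ to $X$). Then $D=A\cap V_C$ is a finite intersection of half-spaces inside the finite-dimensional affine subspace $A\cap V_C^{\pm}$, hence $D-d_0$ is a polyhedral convex set containing $0$ in the finite-dimensional normed space $(A\cap V_C^{\pm})-d_0$ (with right-hand sides $-g_i(d_0)\ge 0$, since $d_0\in V_C$). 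Lemma~\ref{l2.6} then forces $\dH(D-d_0,\,V_{D-d_0})<\infty$, i.e.\ $\dH\big(D,\,d_0+(L\cap V_C)\big)<\infty$, a contradiction.

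If instead $\dH\big(D,\,d_0+(L\cap V_C)\big)<\infty$, then since $\dH(A\cap C,D)=\infty$ the triangle inequality gives $\dH\big(A\cap C,\,d_0+(L\cap V_C)\big)=\infty$, and because $A\cap C=d_0+\big(L\cap(C-d_0)\big)$, translating by $-d_0$ turns this into $\dH\big(L\cap C'',\,L\cap V_C\big)=\infty$, where $C'':=C-d_0$ (so $V_{C''}=V_C$). As $L$ is a linear subspace and $L\cap C''\ne\emptyset$ (it contains $c_0-d_0$ for any $c_0\in A\cap C$), the elementary identity for intersections with a linear subspace gives $V_{L\cap C''}=L\cap V_{C''}=L\cap V_C$, whence $\dH\big(L\cap C'',\,V_{L\cap C''}\big)=\infty$. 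I would then pick $w_0$ in the relative interior of the convex set $L\cap C''$, put $C''':=C''-w_0$, and let $Z:=\aff(L\cap C''')=\lin(L\cap C''')$, a finite-dimensional linear subspace of $X$. Now $Z\cap C'''=L\cap C'''$ is a closed convex set whose interior $C_0$ in $Z$ contains $0$; since $C_0$ is open in $Z$, Lemma~\ref{l2.2n} together with $Z\cap C'''=L\cap C'''$ and $V_{Z\cap C'''}=Z\cap V_{C'''}$ yields $V_{C_0}=Z\cap V_C=V_{L\cap C'''}$. Because $L\cap C'''=(L\cap C'')-w_0$ differs from $L\cap C''$ only by the bounded translation $-w_0$, which alters no Hausdorff distance by more than $\|w_0\|$, it follows that $\dH(C_0,V_{C_0})=\dH\big(L\cap C''',\,V_{L\cap C'''}\big)=\infty$. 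Lemma~\ref{l5.7}, applied with $C'''$ in place of $C$, then shows that $C_0$ is $C'''$-infinitely hiding; as $C_0\subseteq C'''$, this makes $C'''$ --- and hence its translate $C$ --- infinitely hiding.

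The hard part is this second case. The obstacle is that a cross-section of the cone $V_C$ by an affine subspace not passing through the origin need not lie within finite Hausdorff distance of its own recession cone, so Lemma~\ref{l5.7} cannot be applied directly; the two successive translations (first by $-d_0$, which transports the picture over the linear subspace $L$, where the cross-section of the cone coincides with the recession cone of the section, and then by $-w_0$, which centres the section) are precisely what remove this difficulty, the only auxiliary facts being the recession-cone identity $V_{L\cap C''}=L\cap V_C$ and the stability of the property ``the Hausdorff distance is infinite'' under bounded translations. When even this fails --- exactly when the cross-section $D$ is not asymptotically conical --- the first case takes over and extracts non-polyhedrality of $V_C$, so that Lemma~\ref{l5.3} applies instead. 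I therefore expect the case split, and the bookkeeping across the two translations in Case~2, to be where essentially all of the work lies.
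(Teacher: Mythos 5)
Your proof is correct, and in your Case~2 it follows the paper's own strategy (recentre so that the section of $C$ becomes a convex body containing $0$ in a finite-dimensional linear subspace $Z$, then invoke Lemma~\ref{l5.7}); but the dichotomy you introduce is a genuine structural addition, and in fact it repairs a step that the paper's proof leaves unjustified. The paper shifts $C$ and $A$ so that $0$ lies in the relative interior of $C\cap A$ and then asserts that $\dH(Z\cap C,V_{Z\cap C})=\infty$ ``follows'' from the hypothesis; this silently replaces $A\cap V_C$ (the section of the cone by the original affine subspace) with $V_{A\cap C}=L\cap V_C$ (the section by the parallel linear subspace through the new origin), and these two sets can lie at infinite Hausdorff distance from each other. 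Concretely, for $V=\{(x,y,z)\in\IR^3:z\ge\sqrt{x^2+y^2}\}$, $C=(0,0,1)+V$ and $A=\{z=x+1\}$, the set $A\cap C$ is a ray while $A\cap V_C$ is a filled parabola, so the hypothesis $\dH(A\cap C,A\cap V_C)=\infty$ holds, yet $\dH(Z\cap C,V_{Z\cap C})<\infty$ and Lemma~\ref{l5.7} is inapplicable; the conclusion is nevertheless true because $V_C=V$ is not polyhedral in $\cl(V_C-V_C)=\IR^3$. Your Case~1 covers exactly this situation: when the affine cross-section $D=A\cap V_C$ lies at infinite Hausdorff distance from its recession cone $L\cap V_C$, you deduce from Lemma~\ref{l2.6} that $V_C$ cannot be polyhedral in $\cl(V_C-V_C)$ and conclude via Lemma~\ref{l5.3}; when it lies at finite distance, the triangle inequality transfers the hypothesis to the linear section and your two translations reduce matters to Lemma~\ref{l5.7} exactly as in the paper. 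The bookkeeping in both cases (the recession-cone identity $V_{L\cap C''}=L\cap V_{C''}$ for a linear subspace $L$, the stability of infinite Hausdorff distance under bounded translations, and the nonnegativity $-g_i(d_0)\ge 0$ needed to apply Lemma~\ref{l2.6}) is carried out correctly.
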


\begin{proof} It is well-known that $C\cap A$ has non-empty interior $C_0$ in its affine hull $\aff(C\cap A)$. Moreover, $C\cap A$ coincides with the closure $\bar C_0$ of $C_0$ in $A$.
Shifting the set $C$, if necessary, we can assume that $0\in C_0$. Then $Z=\aff(C_0)$ is a finite-dimensional linear subspace of $X$ such that $C\cap Z=C\cap A$ has non-empty interior $C_0$ which contains zero and is dense in $C\cap Z$. It follows that $\dH(C_0,V_{C_0})=\dH(Z\cap C,V_{Z\cap C})=\infty$ and hence $C_0$ is $C$-infinitely hiding by Lemma~\ref{l5.7}, and $C$ is infinitely hiding as $C_0\subset C$.
\end{proof}

\section{Approximating by positively hiding convex sets}\label{s6}

In this section we search for conditions guaranteeing that a closed convex subset $C$ of a Banach space can be approximated by positively hiding convex subsets of $X$.

At first we construct biorthogonal sequences, which are related to convex sets that have trivial characteristic cone.

We recall that a sequence of pairs $\{(x_n,x^*_n)\}_{n\in \w}\subset X\times X^*$ is {\em  biorthogonal} if $x^*_n(x_n)=1$ and $x^*_n(x_k)=0$ for all $n\ne k$, see \cite[1.1]{HMVZ}.

\begin{lemma}\label{l6.1}  Assume that a closed convex subset $C$ of an infinite-dimensional Banach space $X$ has trivial characteristic cone $V_C=\{0\}$. Then there exists a biorthogonal sequence $\{(x_n,x_n^*)\}_{n\in\w}\subset X\times V^*_C$ such that $\|x^*_n\|=1\le \|x_n\|<4$ for all $n\in\w$.
\end{lemma}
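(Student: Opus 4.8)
The plan is to build the biorthogonal sequence $\{(x_n,x_n^*)\}_{n\in\w}$ by induction, extracting at each step a new functional $x_n^* \in V_C^*$ that is ``independent'' of the finitely many already chosen, and then a vector $x_n$ dual to it. The condition $V_C=\{0\}$ is what gives us a supply of functionals in $V_C^*$: by Lemma~\ref{l2.5n} the weak$^*$ closure $\cl^*(V_C^*)$ equals $V^*_{V_C}=V^*_{\{0\}}=X^*$, so $V_C^*$ is weak$^*$ dense in $X^*$. In particular $V_C^*$ cannot be contained in any finite-codimensional weak$^*$-closed subspace, which is exactly the room needed to run the induction without exhausting $V_C^*$.

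The key steps, in order. \emph{Step 1 (choosing $x_n^*$).} Suppose $x_0^*,\dots,x_{n-1}^*\in V_C^*$ and $x_0,\dots,x_{n-1}\in X$ have been constructed with $x_i^*(x_j)=\delta_{ij}$. Pick any nonzero $y\in\bigcap_{i<n}\ker x_i^*$ (possible since $X$ is infinite-dimensional, so this intersection has codimension $\le n$ and is nontrivial). Since $V_C^*$ is weak$^*$ dense in $X^*$ and $y\ne 0$, there is $g\in V_C^*$ with $g(y)\ne 0$; replacing $g$ by a scalar multiple and using that $V_C^*$ is a cone (closed under positive scalars) — and, if necessary, that $-g$ can be reached as well after a harmless adjustment, or simply by rescaling — we may take $g(y)=1$. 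Now correct $g$ to kill the old vectors: set $x_n^* = g - \sum_{i<n} g(x_i)\,x_i^*$. Then $x_n^*\in V_C^*$ (it is a finite combination with the leading term in $V_C^*$ and we must check the correction terms stay in the cone — here one uses that each $x_i^*\in V_C^*$ and $V_C^*$ is a convex cone, so after rescaling all coefficients we land back in $V_C^*$; alternatively choose the $x_i^*$ from the start so that $V_C^*$ absorbs these combinations). By construction $x_n^*(x_i)=0$ for $i<n$ and $x_n^*(y)=1$, so $x_n^*\ne 0$; normalize $x_n^*$ to have $\|x_n^*\|=1$ (rescaling by a positive constant keeps it in the cone $V_C^*$).

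\emph{Step 2 (choosing $x_n$).} Having $x_n^*$ with $\|x_n^*\|=1$ and $x_n^*|_{\lin\{x_0,\dots,x_{n-1}\}}=0$, we need $x_n$ with $x_n^*(x_n)=1$, $x_i^*(x_n)=0$ for $i<n$, and $\|x_n\|<4$. By definition of the norm there is $z\in X$ with $\|z\|<2$ (say $\|z\|\le \tfrac{3}{2}$) and $x_n^*(z)>\tfrac12$; scaling, get $w$ with $x_n^*(w)=1$ and $\|w\|<2$ (taking $\|w\|=\|z\|/x_n^*(z)<3$; tightening the choice of $z$ gives $<2$). Now project $w$ onto $\bigcap_{i<n}\ker x_i^*$: the map $P=\mathrm{id}-\sum_{i<n}x_i^*(\cdot)\,x_i$ is a projection onto that subspace, so $x_n:=P(w)$ satisfies $x_i^*(x_n)=0$ for $i<n$ and $x_n^*(x_n)=x_n^*(w)-\sum_{i<n}x_i^*(w)x_n^*(x_i)=x_n^*(w)=1$ using $x_n^*(x_i)=0$. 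The only quantitative issue is the norm of $P(w)$; bounding $\|P\|$ requires $\sum_{i<n}\|x_i^*\|\,\|x_i\|$ to be controlled, which in turn forces us to keep the constructed data small — one remembers $\|x_i^*\|=1$ and $\|x_i\|<4$ and, crucially, arranges the supports so that only \emph{one} term of $P$ is relevant at a time (this is where a more careful, coordinate-wise inductive bookkeeping is needed, e.g. a perturbation argument ``à la Ovsepian–Pełczyński / Mazur'' so that the correction terms are small).

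\emph{The main obstacle.} The honest difficulty is exactly that norm control: naïvely correcting with $-\sum g(x_i)x_i^*$ and then projecting can blow the norms up geometrically in $n$. The real proof surely uses a Mazur-type construction: build the biorthogonal system together with a small perturbation parameter $\e_n\to 0$ at each step so that $\|\sum_{i<n} g(x_i)x_i^*\|$ and the projection constants stay bounded, letting one achieve $\|x_n\|<4$ uniformly. So the plan above is structurally right — induction, use weak$^*$-density of $V_C^*$ from Lemma~\ref{l2.5n} to find new functionals, then dualize and project — but the execution must interleave the two steps with a quantitative ``nearly orthogonal'' scheme to keep everything inside the ball of radius $4$ and, simultaneously, inside the cone $V_C^*$; verifying those two constraints together is the crux.
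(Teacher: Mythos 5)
Your overall plan (induct, extract a new functional from $V^*_C$ orthogonal to the previous vectors, then dualize) is the right shape, but as written it has two genuine gaps, and you have only flagged one of them honestly. First, the correction $x_n^*=g-\sum_{i<n}g(x_i)x_i^*$ does not stay in $V_C^*$: the dual characteristic cone is a convex cone, not a linear subspace, so subtracting a \emph{positive} multiple of some $x_i^*$ (which happens whenever $g(x_i)>0$) can leave the cone, and rescaling by positive constants cannot repair this. The paper avoids any correction step entirely: it lets $E$ be the (finite-dimensional) span of the previously chosen vectors together with certain auxiliary vectors, passes to the quotient $q:X\to X/E$, checks that the image of $C$ still has trivial characteristic cone there (this uses $V_C=\{0\}$ plus finite-dimensionality of $E$, not the weak$^*$-density of $V_C^*$), picks a unit-norm functional $\tilde x^*_k\in V^*_{q(C_0)}$ and sets $x_k^*=\tilde x_k^*\circ q$. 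This functional lies in $V_C^*$ automatically and already annihilates all earlier $x_i$ --- no correction, hence no cone-membership problem.

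Second, the bound $\|x_n\|<4$, which you defer to an unspecified ``Mazur-type'' perturbation scheme, is exactly where the paper's auxiliary vectors enter, and the mechanism is different from projecting a small $w$ onto $\bigcap_{i<n}\ker x_i^*$ (whose projection constant you indeed cannot control a priori). The paper includes in $E$, for each $f$ in a finite $\tfrac18$-net of the $2$-ball of $L^*=\lin\{x_0^*,\dots,x_{k-1}^*\}$, a norming vector $x_f$ with $\|x_f\|=1$ and $f(x_f)>\|f\|-\tfrac18$. Since $x_k^*$ vanishes on every $x_f$, a short computation forces $\dist(x_k^*,L^*)>\tfrac14$. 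Then the weak$^*$-compact ball $B^*$ of radius $\tfrac14$ around $x_k^*$ misses $L^*$, and a Hahn--Banach separation in the weak$^*$ topology yields $x_k$ with $x^*(x_k)=0$ for all $x^*\in L^*$ and $\inf_{x^*\in B^*}x^*(x_k)>0$; after normalizing $x_k^*(x_k)=1$, the latter inequality gives $\|x_k\|<4$ (if $\|x_k\|\ge4$, the functional $x_k^*-\tfrac14x^*$ with $x^*$ norming $x_k$ would lie in $B^*$ yet be nonpositive at $x_k$). So the orthogonality and the norm bound are produced by a single separation argument, not by a projection whose norm must be controlled inductively. As it stands, your proposal does not constitute a proof; both the cone membership of $x_n^*$ and the uniform bound on $\|x_n\|$ remain unestablished.
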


\begin{proof} Replacing $C$ by a shift of its closed neighborhood, we can assume that the convex subset $C$ of $X$ has non-empty interior $C_0$, which contains zero. After such replacement the cones $V_C$ and $V_C^*$ will not change, see Lemma~\ref{l2.4n}.

The biorthogonal sequence $\{(x_n,x_n^*)\}_{n\in\w}$ will be constructed by induction. We start by choosing an arbitrary functional $x^*_0\in V^*_C$ and a point  $x_0\in X$ with $1=\|x_0^*\|=x^*_0(x_0)\le \|x_0\|<4$.

Assume that for some $k\in\w$ a finite  biorthogonal sequence $\{(x_n,x^*_n)\}_{n<k}\subset X\times V^*_C$ has been constructed so that $1=\|x^*_n\|\le\|x_n\|<4$ for all $n< k$. Let $L^*$ be the linear hull of the finite set $\{x^*_0,\dots,x^*_{k-1}\}$ in the dual Banach space $X^*$. In the compact set $L^*_2=\{x^*\in L:\|x^*\|\le 2\}$, choose a finite subset $F^*_2\subset L^*_2$ such that for each functional $x^*\in L^*_2$ there is a functional $y^*\in F^*_2$ with $\|x^*-y^*\|<\frac18$. For every functional $f\in F^*_2$ choose a point $x_f\in X$ such that $\|x_f\|=1$ and $f(x_f)>\|f\|-\frac18$.

Let $E$ be the linear hull of the finite set $\{x_i:i<k\}\cup\{x_f:f\in F_2^*\}$ in $X$.
Let $\tilde X=X/E$ be the quotient Banach space, $q:X\to\tilde X$ be the quotient operator, and $\tilde C=q(C)$ be the quotient image of the convex set $C$ in $\tilde X$. Since the quotient operator $q$ is open, the image $\tilde C_0=q(C_0)$ coincides with the interior of $\tilde C$ in $\tilde X$. We claim that the set $\tilde C_0$ has trivial characteristic cone $V_{\tilde C_0}=\{0\}$. Assuming the converse, find a non-zero vector $\tilde v\in V_{\tilde C_0}$, choose any vector $v\in q^{-1}(\tilde v)$ and consider the finite-dimensional linear subspace $E_v=\lin(E\cup\{v\})$. The triviality of the characteristic cone $V_C=\{0\}$ implies that the intersection $E_v\cap C_0$ is bounded and then the image $q(E_v\cap C_0)=\IR \tilde v\cap \tilde C_0$ also is bounded. So, $\bar\IR_+\tilde v\not\subset C_0$, which contradicts $\tilde v\in V_{\tilde C_0}\setminus\{0\}$.

The triviality of the cone $V_{\tilde C_0}=\{0\}$ implies that $\tilde C\ne \tilde X$, so we can find a functional $\tilde x_k^*\in V^*_{\tilde C_0}$ with unit norm $\|\tilde x_k^*\|=1$. Now consider the functional $x_k^*=\tilde x_k^*\circ q$ and observe that $\|x_k^*\|=\|\tilde x^*_k\|=1$ and $x_k^*(x_i)=0$ for all $i<k$ and $x_k^*(x_f)=0$ for all $f\in F_2^*$. We claim that $\dist(x^*_k,L^*)>\frac14$. Assuming the converse, find a functional $l^*\in L^*$ with $\|x^*_k-l^*|\le \frac14$. Then $\|l^*\|\le\|x^*_k\|+\frac14$, so $l^*\in L_2^*$ and by the choice of the set $F_2^*$, we can find a functional $f\in F^*_2$ such that $\|l^*-f\|<\frac18$. Then $\|x^*_k-f\|\le\|x^*_k-l^*\|+\|l^*-f\|\le \frac14+\frac18=\frac38$, $\|f\|\ge\|x^*_k\|-\|x^*_k-f\|\ge 1-\frac38=\frac58$ and we obtain the contradiction: $$\frac48=\frac58-\frac18\le \|f\|-\frac18<f(x_f)=|x_k^*(x_f)-f(x_f)|\le\|x^*_k-f\|\cdot\|x_f\|\le\frac38.$$

This contradiction shows that $\dist(x^*_k,L^*)>\frac14$ and implies that the the closed $\frac14$-ball $B^*=\{x^*\in X^*:\|x^*-x^*_{k}\|\le \frac14\}$ centered at $x^*_{k}$ does not intersect $L^*$. By the Banach-Alaoglu Theorem this ball is compact in the weak$^*$ topology of $X^*$. Now the Hahn-Banach Theorem applied to the weak$^*$ topology of $X^*$ yields us a point $x_{k}\in X$ that separates $L^*$ and $B^*$ in the sense that $\sup_{x^*\in L^*}x^*(x_{k})< \inf_{x^*\in B^*}x^*(x_{k})$.
It follows from the linearity of $L^*$ that $x^*(x_{k})=0$ for all $x^*\in L^*$.
In particular, $x^*_i(x_{k})=0$ for all $i< k$. Multiplying $x_{k}$ by a suitable positive constant, we may additionally assume that $x^*_{k}(x_{k})=1$.
Then $\|x_{k}\|\ge 1$ because $x^*_{k}$ has unit norm. To finish the inductive step it suffices to check that $\|x_k\|<4$. Assuming the converse, find a functional $x^*\in X^*$ with unit norm such that $x^*(x_k)=\|x_k\|\ge 4$. Then the functional $y^*=x^*_k-\frac14x^*$ belongs to the ball $B^*$ and thus $y^*(x_k)>0$. On the other hand, $y^*(x_k)=x^*_k(x_k)-\frac14x^*(x_k)\le 1-\frac14\cdot4= 0$, which is a desired contradiction.
\end{proof}

\begin{lemma}\label{l6.2}  Assume that a closed convex subset $C$ of an infinite-dimensional Banach space $X$ has trivial characteristic cone $V_C=\{0\}$. Then for each $\e>0$ there is a positively hiding closed convex set $C_\e\subset X$ with $\dH(C_\e,C)\le \e$.
\end{lemma}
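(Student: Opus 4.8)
The plan is to feed the biorthogonal sequence produced by Lemma~\ref{l6.1} into an explicit perturbation of $C$ that is \emph{forced} to hide a prescribed infinite set sitting at a fixed positive distance from it.

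\textbf{Normalization and the biorthogonal system.} Replacing $C$ by $C':=(C-c_0)+\tfrac\e3\bar\IB$ for some $c_0\in C$, I may assume $0\in\mathrm{int}(C)$, say $\rho\bar\IB\subset C$; this changes neither $V_C$ nor $V_C^*$ (Lemma~\ref{l2.4n} and translation/ball invariance), and since positive hiding and the metric component are translation invariant, it suffices to produce $C_\e$ with $\dH(C_\e,C)\le\tfrac{2\e}3$. Lemma~\ref{l6.1} now hands me a biorthogonal sequence $\{(x_n,x_n^*)\}_{n\in\w}\subset X\times V_C^*$ with $\|x_n^*\|=1\le\|x_n\|<4$. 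The inclusion $x_n^*\in V_C^*$ means exactly that $M_n:=\sup x_n^*(C)<\infty$, and $\rho\bar\IB\subset C$ forces $M_n\ge\rho>0$. The $x_n$ will be ``poke directions'' and the $x_n^*$ will pin distances from below via $\|x_n^*\|=1$ and biorthogonality $x_n^*(x_m)=\delta_{nm}$.

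\textbf{The hidden set and the perturbation.} Fix $\delta:=\min(\rho,\e)>0$. Put $a_n:=(M_n+\delta)\,x_n$ and $A:=\{a_n:n\in\w\}$; then $x_n^*(a_n)=M_n+\delta$, so $\dist(a_n,C)\ge x_n^*(a_n)-M_n=\delta$ for every $n$ (Lemma~\ref{l2.3} also guarantees $a_n\notin C$ since $x_n\notin\{0\}=V_C$). Along the segment $[a_n,a_m]$ the coordinates read $x_n^*\big((1-\theta)a_n+\theta a_m\big)=(1-\theta)(M_n+\delta)$ and $x_m^*\big((1-\theta)a_n+\theta a_m\big)=\theta(M_m+\delta)$ — each governed by a single endpoint — so both fall back to $\le M_n$, resp.\ $\le M_m$, precisely for $\theta$ in the interval $\big[\tfrac{\delta}{M_n+\delta},\,1-\tfrac{\delta}{M_m+\delta}\big]$, which is non-empty because $\delta\le\rho\le M_n,M_m$. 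Assuming (this is the crux, see below) that on this interval the segment comes within $\tfrac\e2$ of $C$, pick such a point $b_{nm}\in[a_n,a_m]$ with $\dist(b_{nm},C)\le\tfrac\e2$ and $x_n^*(b_{nm})\le M_n$, $x_m^*(b_{nm})\le M_m$, and set
$$C_\e:=\cconv\Big(C\cup\{b_{nm}:n\ne m\}\Big).$$
(If an ``enlarging'' perturbation turned out to be geometrically awkward one could instead cut slivers $\{x_n^*\le M_n-\delta_n\}$ off $C$, the biorthogonality making such cuts repairable; but the convex-hull version looks cleaner.)

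\textbf{Verifications, and the main obstacle.} Three points finish it. (i) $\dH(C_\e,C)\le\tfrac{2\e}3$: immediate, since $C\subset C_\e\subset C+\tfrac\e2\bar\IB$. (ii) $\inf_n\dist(a_n,C_\e)>0$: since $x_n^*\in V_C^*=V_{C_\e}^*$ and, by biorthogonality, $x_k^*(b_{ij})\in\{0\}$ for $k\notin\{i,j\}$ while $x_i^*(b_{ij})\le M_i$, $x_j^*(b_{ij})\le M_j$, one has $\sup x_n^*(C_\e)=M_n$; hence $\dist(a_n,C_\e)\ge x_n^*(a_n)-M_n=\delta$ for all $n$. (iii) $[a_n,a_m]\cap C_\e\ne\emptyset$: trivially, $b_{nm}\in C_\e$. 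Thus $C_\e$ is a positively hiding closed convex set on Hausdorff distance $\le\e$ from the original $C$. The one nontrivial step — and the main obstacle — is the geometric claim used in Step~2: that each segment $[a_n,a_m]$, whose endpoints sit at distance $\ge\delta$ from $C$ but which may run very far from $C$ (the scalars $M_n$ are unbounded in general), nevertheless returns to within $\tfrac\e2$ of $C$ at a parameter in the admissible interval. This is exactly where the full strength of Lemma~\ref{l6.1} must be used: the bound $\|x_n\|<4$ controls how far a point $(1-\theta)a_n+\theta a_m$ can stray from the segment $[(1-\theta)\hat b_n+\theta\hat b_m]$ joining nearest points of $C$, and biorthogonality (together with $0\in\mathrm{int}(C)$) is what lets one analyse the situation inside the two-dimensional sections $\lin(x_n,x_m)$ and conclude that the segment genuinely dips back into a $\tfrac\e2$-neighbourhood of $C$. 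Everything else is routine Hausdorff-distance bookkeeping.
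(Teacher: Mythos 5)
Your construction differs from the paper's at one decisive point: you place the poke points on the rays through the biorthogonal vectors, $a_n=(M_n+\delta)x_n$, whereas the paper places them \emph{next to the set}, at $a_n=c_n+\tfrac{\e}4x_n$ where $c_n\in C$ nearly maximizes $x^*_n$ on $C$ (so that $\|a_n-c_n\|=\tfrac\e4\|x_n\|<\e$, which is exactly what the bound $\|x_n\|<4$ in Lemma~\ref{l6.1} is for). With the paper's choice every segment $[a_n,a_m]$ lies in $\cl(C+\e\IB)$ automatically, and $C_\e$ is obtained by \emph{cutting} $\cl(C+\e\IB)$ with the half-spaces $x^*_k\le\sup x^*_k(C)+\tfrac\e8$, so no ``return to $C$'' claim is ever needed: the midpoints of the segments satisfy the cuts by biorthogonality and are in $\cl(C+\e\IB)$ by convexity. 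The step you flag as ``the crux'' is therefore not a technical detail to be filled in later --- it is the whole difficulty, and as stated it is false.

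Concretely, your claim that each segment $[a_n,a_m]$ dips to within $\tfrac\e2$ of $C$ at an admissible parameter does not follow from the properties listed in Lemma~\ref{l6.1}. Take $X=\ell_1$, $C=\bar\IB_{\ell_1}$ (so $V_C=\{0\}$, $M_n=1$), and the biorthogonal system $x^*_n=e^*_{2n+1}$, $x_n=e_{2n+1}+2e_{2n}$, which satisfies all the conclusions of Lemma~\ref{l6.1} with $\|x_n\|=3<4$. Then $a_n=(1+\delta)(e_{2n+1}+2e_{2n})$ and, since the four basis vectors involved have pairwise distinct indices, every point $p_\theta=(1-\theta)a_n+\theta a_m$ has $\|p_\theta\|_1=3(1+\delta)$ identically in $\theta$; hence $\dist(p_\theta,C)=2+3\delta$ for all $\theta$, and no admissible $b_{nm}$ within $\tfrac\e2$ of $C$ exists for $\e<4$. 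Adding any such $b_{nm}$ to the convex hull would destroy the bound $\dH(C_\e,C)\le\e$, so your $C_\e$ cannot be formed. The rest of your bookkeeping (the computation $\sup x^*_k(C_\e)=M_k$ via biorthogonality, hence $\dist(a_n,C_\e)\ge\delta$, and the hiding of $A$ by the points $b_{nm}$) is sound and parallels the paper's, but the argument cannot be repaired without relocating the $a_n$ to within $O(\e)$ of $C$ as the paper does.
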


\begin{proof} By Lemma~\ref{l6.1}, there exists a biorthogonal sequence $\{(x_n,x_n^*)\}_{n\in\w}\subset X\times V^*_C$ such that $1=\|x_n^*\|\le \|x_n\|<4$ for all $n\in\w$.

Then for every $\e>0$, a positively hiding convex set $C_\e$ with $\dH(C_\e,C)\le\e$ can be defined by the formula:
$$C_\e=\big\{x\in \cl(C+\e\IB):\forall n\in\w\;\;x^*_n(x)\le \tfrac18\e+\sup x^*_n(C)\big\}$$where $\IB=\{x\in X:\|x\|<1\}$ stands for the open unit ball of the Banach space $X$.
It is clear that $C\subset C_\e\subset \cl(C+\e\IB)$, which implies that $\dH(\tilde C,C)\le\e$.
It remains to check that the set $\tilde C$ is positively hiding. This can be done as follows.

For every $n\in\w$ choose a point $c_n\in C$ with $x^*_n(c_n)>\sup x^*_n(C)-\tfrac1{16}\e$ and consider the point $a_n=c_n+\frac{\e}4 x_n$. We claim that $\dist(a_n,C_\e)\ge \frac1{16}\e$. Indeed, for any point $c\in C_\e$, we get $x^*_n(c)\le\sup x^*_n(C)+\frac18\e$ while
$$x^*_n(a_n)=\frac\e4 x^*_n(x_n)+x^*_n(c_n)>\frac\e4+\sup x^*_n(C)-\frac\e{16}=\frac{3\e}{16}+\sup x^*_n(C).$$ Consequently, $\|a_n^*-c\|=\|x^*_n\|\cdot\|a_n-c\|\ge x^*_n(a_n)-x^*(c)\ge \frac{3\e}{16}-\frac{\e}{8}=\frac\e{16}$.

So, the set $A=\{a_n\}_{n\in\w}$ lies on positive distance $\inf_{a\in A}\dist(a,C_\e)\ge\frac1{16}\e$ from $C_\e$.
To show that the set $A$ is infinite and hidden behind $C_\e$, it suffices to check that for any distinct numbers $n,m$ the midpoint $\frac12a_n+\frac12a_m$ of the
segment $[a_n,a_m]$ belongs to the convex set $C_\e$.
Taking into account that $a_n,a_m\subset \cl(C+\e\IB)$, we conclude that $\frac12a_n+\frac12a_m\in [a_n,a_m]\subset \cl(C+\e\IB)$.
The inclusion $\frac12a_n+\frac12a_m\in C_\e$ will follow from definition of $C_\e$ as soon as we check that $x^*_k(\frac12a_n+\frac12a_m)\le\sup x^*_k(C)+\frac18\e$ for every $k\in\w$.

If $k\notin\{n,m\}$, then $x^*_k(x_n)=x^*_k(x_m)=0$ and hence
$$x^*_k(\tfrac12a_n+\tfrac12a_m)=x^*_k(\tfrac12c_n+\tfrac12c_m)\le\sup x^*_k(C).$$
If $k=n$, then
$$x^*_k(\tfrac12a_n+\tfrac12a_m)=x^*_n(\tfrac12c_n+\tfrac12c_m)+\tfrac18\e x^*_n(x_n)\le\sup x^*_n(C)+\tfrac18\e.$$By analogy we can treat the case $k=m$.
\end{proof}

\begin{lemma}\label{l6.3}  Assume that for a closed convex subset $C$ of a Banach space $X$ the closed linear subspace $Z=\cl(V_C-V_C)$  has infinite codimension in $X$. Then for each $\e>0$ there is a positively hiding convex set $C_\e\subset X$ with $\dH(C_\e,C)\le \e$.
\end{lemma}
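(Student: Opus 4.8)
The plan is to reduce the lemma, via a dichotomy on how ``large'' $C$ is modulo finite-dimensional subspaces, either to Lemma~\ref{l5.8} or to the biorthogonal construction behind Lemmas~\ref{l6.1}--\ref{l6.2}. First I would pass to $C'=\cl(C+\tfrac\e2\IB)$: by Lemma~\ref{l2.4n} this changes neither $V_C$ nor $V_C^*$ (hence not $Z=\cl(V_C-V_C)$), it gives $0\in\mathrm{int}(C')$, and $\dH(C',C)\le\tfrac\e2$, so it suffices to produce a positively hiding set within distance $\tfrac\e2$ of $C'$. For a finite-dimensional subspace $F\subset X$ one has $V^*_{C'+F}=V^*_{C'}\cap F^\perp$, so $\overline{C'+F}=X$ is equivalent to $V^*_{C'}\cap F^\perp=\{0\}$; moreover, since $C'$ has nonempty interior, $\overline{C'+F}=X$ forces $C'+F=X$ (a convex set with nonempty interior whose closure is the whole space equals the whole space, applied in $X/F$). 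Now distinguish two cases.

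Case~1: $\overline{C'+F}=X$ for some finite-dimensional subspace $F$. Here I claim $C'$ is itself infinitely hiding, so $C_\e:=C'$ works. Let $\pi\colon X\to Y:=X/F$ be the quotient operator. Since $V_{C'}\subset Z$ and $Z+F$ is a closed subspace of infinite codimension in $X$ (as $Z$ has infinite codimension and $F$ is finite-dimensional), $\pi(Z)=(Z+F)/F$ is a proper closed subspace of $Y$; choose $\bar y\in Y\setminus\pi(Z)$ and put $Z_1=F+\IR\bar y$, a finite-dimensional subspace of $X$. From $C'+F=X$ one gets $\pi(C'\cap Z_1)=\IR\bar y$, so $C'\cap Z_1$ contains points $c_t$ with $\pi(c_t)=t\bar y$ for every $t\in\IR$; on the other hand $\pi(V_{C'}\cap Z_1)\subset\IR\bar y\cap\pi(Z)=\{0\}$, whence $\dist(c_t,V_{C'}\cap Z_1)\ge\|\pi(c_t)\|_Y=|t|\,\|\bar y\|_Y\to\infty$. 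Thus $\dH(C'\cap Z_1,\,V_{C'}\cap Z_1)=\infty$, and Lemma~\ref{l5.8}, applied to $C'$ and the finite-dimensional affine subspace $Z_1$, shows that $C'$ is infinitely hiding, hence positively hiding.

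Case~2: $V^*_{C'}\cap F^\perp\ne\{0\}$ for every finite-dimensional subspace $F\subset X$. Then I would rerun the inductive construction of Lemma~\ref{l6.1} for $C'$ almost verbatim, with one change: at the $k$-th step, after forming $E=\lin\bigl(\{x_i\}_{i<k}\cup\{x_f:f\in F^*_2\}\bigr)$, the inequality $\overline{C'+E}\ne X$ now follows directly from the Case~2 hypothesis (instead of from triviality of the characteristic cone), so for the quotient operator $q\colon X\to X/E$ we have $\overline{q(C')}\ne X/E$, and Hahn--Banach supplies a norm-one functional $\tilde x^*_k$ on $X/E$ bounded above on $\overline{q(C')}$; its pull-back $x^*_k=\tilde x^*_k\circ q$ is then a norm-one element of $V^*_{C'}$ annihilating $E$. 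The rest of that step (the estimate $\dist(x^*_k,L^*)>\tfrac14$ via the $\tfrac18$-net $F^*_2$, and the choice of $x_k$ with $x^*_i(x_k)=0$ for $i<k$, $x^*_k(x_k)=1$, $\|x_k\|<4$) goes through unchanged, yielding a biorthogonal sequence $\{(x_n,x^*_n)\}\subset X\times V^*_{C'}$ with $\|x^*_n\|=1\le\|x_n\|<4$. Feeding this sequence into the argument of Lemma~\ref{l6.2} (which uses only biorthogonality, $x^*_n\in V^*_{C'}$, $\|x^*_n\|=1$ and $\|x_n\|<4$) produces a positively hiding $C'_{\e/2}$ with $\dH(C'_{\e/2},C')\le\tfrac\e2$, and $C_\e:=C'_{\e/2}$ satisfies $\dH(C_\e,C)\le\e$.

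The step requiring most care is Case~1: a direct generalization of Lemma~\ref{l6.1} is genuinely impossible, since $V^*_C$ can meet a finite-codimensional subspace $F^\perp$ only in $\{0\}$ --- e.g. for the convex ``paraboloid'' $C=\{x\in l_2:x_1+\sum_{n\ge2}x_n^2\le0\}$ with $F=\IR e_1$ --- so exactly that obstruction must be isolated and, using that the recession span $Z$ has infinite codimension, converted through a finite-dimensional section into the hypothesis of Lemma~\ref{l5.8}. The other ingredients --- the identity $V^*_{C'+F}=V^*_{C'}\cap F^\perp$, the upgrade $\overline{C'+F}=X\Rightarrow C'+F=X$, and the fact that the Case~2 hypothesis is precisely what keeps the construction of Lemma~\ref{l6.1} alive --- are routine.
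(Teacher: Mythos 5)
Your proof is correct, but it follows a genuinely different route from the paper's. The paper first reduces to $-V_C\cap V_C=\{0\}$ via Lemma~\ref{l3.3}, then splits on whether $V_C$ is polyhedral in $Z$ (the non-polyhedral case being dispatched by Lemma~\ref{l5.3}); in the polyhedral case $Z=V_C-V_C$ is finite-dimensional, so the paper passes to the quotient $\tilde X=X/Z$, splits again on whether $V_{\tilde C_0}$ is trivial (the nontrivial case going through a finite-dimensional section and Lemma~\ref{l5.8}), applies Lemma~\ref{l6.2} verbatim in $\tilde X$, and finally lifts the hidden set back to $X$ with Lemma~\ref{l5.4}. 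You instead stay in $X$ throughout and organize everything around the single dual dichotomy ``$V^*_{C'}\cap F^{\perp}=\{0\}$ for some finite-dimensional $F$'' versus its negation. Your Case~1 correctly isolates the one obstruction to the biorthogonal construction and converts it, via the section $Z_1=F+\IR y$ (take a representative $y\in X$ of $\bar y$) and the infinite codimension of $Z+F$, into the hypothesis of Lemma~\ref{l5.8}; your Case~2 observes that the proof of Lemma~\ref{l6.1} needs only the weaker hypothesis that $V^*_{C'}$ meets every finite-codimensional annihilator nontrivially (which is indeed strictly weaker than $V_{C'}=\{0\}$, as your half-space example shows), and that Lemma~\ref{l6.2}'s computation uses nothing beyond the biorthogonal data. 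What the paper's route buys is reuse of the already-proved quotient-and-lift machinery (Lemmas~\ref{l3.3}, \ref{l5.3}, \ref{l5.4}) with Lemmas~\ref{l6.1}--\ref{l6.2} kept in their simplest form; what yours buys is the elimination of the lifting step and of the polyhedrality case analysis, at the price of reproving a strengthened Lemma~\ref{l6.1}, and it exposes more clearly the exact dual condition under which the biorthogonal construction survives. Two cosmetic points: you should record the translation making $0\in C'$ (a neighborhood of $C$ need not contain the origin), and note that the final answer must be translated back; both are harmless.
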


\begin{proof} Using Lemma~\ref{l3.3} (by analogy with the proof of Lemma~\ref{l3.4}), we can reduce the proof to the case $-V_C\cap V_C=\{0\}$. So, from now on we assume that $-V_C\cap V_C=\{0\}$. Replacing $C$ by a shift of its closed neighborhood, we can assume that $C$ has non-empty interior $C_0$ in $X$ and $0\in C_0$. If the characteristic cone $V_C$ is not polyhedral in its closed linear hull $Z=\cl(V_C-V_C)$, then by Lemma~\ref{l5.3}, the convex set $C$ is infinitely (and hence positively) hiding and hence we can put $C_\e=C$. So, we assume that the characteristic cone $V_C$ is polyhedral in $Z$.
Since $-V_C\cap V_C=\{0\}$, the polyhedrality of $V_C$ implies that the closed linear space $Z=\cl(V_C-V_C)$ is finite-dimensional and coincides with $V_C-V_C$. Now consider the quotient Banach space $\tilde X=X/Z$, the quotient operator $q:X\to\tilde X$, and the convex set $\tilde C=q(C)$. Since the operator $q$ is open, the image $\tilde C_0=q(C_0)$ of the interior $C_0$ of $C$ coincides with the interior of $\tilde C$. Now consider the characteristic cone $V_{\tilde C_0}$ of the open convex set $\tilde C_0$.

If $V_{\tilde C_0}$ contains some non-zero vector $\tilde v$, then for any vector $v\in q^{-1}(v)$ and for the finite-dimensional linear subspace $E=\lin(Z\cup\{v\})$ the intersection
$E\cap C_0$ lies on infinite Hausdorff distance $\dH(E\cap C_0,V_{E\cap C_0})=\infty$ from its characteristic cone. This is so because $q(V_{E\cap C_0})\subset q(V_C)=\{0\}$ while $q(E\cap C_0)\supset\bar\IR_+\tilde v$. Then Lemma~\ref{l5.8} guarantees that the set $C$ is infinitely (and hence positively) hiding. In this case we can put $C_\e=C$.

So, it remains to consider the case $V_{\tilde C_0}=\{0\}$. In this case, Lemma~\ref{l6.2} yields a positively hiding closed convex set $\tilde C_\e\subset\tilde X$ with $\dH(\tilde C_\e,\tilde C_0)<\e$. Now consider the convex set $C_\e=(C+\e\IB)\cap q^{-1}(\tilde C_\e)$ and observe that $\dH(C_\e,C)\le\e$ and $q(C_\e)=\tilde C_\e$.

The set $\tilde C_\e$, being positively hiding, hides a countably infinite set $\tilde A\subset \tilde X\setminus\tilde C_\e$ on positive distance $\inf_{a\in\tilde A}\dist(a,\tilde C_\e)>0$ from $\tilde C_\e$. Taking into account that $Z=V_{Z\cap C_0}-V_{Z\cap C_0}$ and $q(C_\e)=\tilde C_\e$, with help of Lemma~\ref{l5.4}, we can find an infinite subset $A\subset q^{-1}(\tilde A)$ hidden behind the convex set $C_\e\subset C$. Since the quotient operator $q$ is not expanding, the set $A$ lies on positive distance
$$\inf_{a\in A}\dist(a,C)=\inf_{a\in A}\dist(a,C_0)\ge\inf_{\tilde a\in\tilde A}\dist(\tilde a,\tilde C_0)>0$$from the convex set $C_\e$. So, $C_\e$ is positively hiding.
\end{proof}

Our next approximation lemma will be used in the proof of the implication $(4)\Ra(2)$ of Theorem~\ref{main}.

\begin{lemma}\label{l6.4} Let $C$ be a closed convex set in a Banach space $X$. If $\dH(C,V_C)=\infty$, then for each $\e>0$ there is a positively hiding convex set $\tilde C\subset X$ with $\dH(\tilde C,C)\le \e$.
\end{lemma}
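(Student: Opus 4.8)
After a translation we may assume $0\in C$; since $V_C$ is then a convex cone through the origin lying in $C$, we have $V_C\subset C$, so $\dH(C,V_C)=\sup_{c\in C}\dist(c,V_C)=\infty$. The plan is a case analysis on the shape of the characteristic cone $V_C$. If $V_C$ is \emph{not} polyhedral in its closed linear hull $V_C^\pm=\cl(V_C-V_C)$, then Lemma~\ref{l5.3} already shows that $C$ is infinitely hiding, hence positively hiding, and $\tilde C=C$ works for every $\e>0$. So assume $V_C$ is polyhedral in $V_C^\pm$, say $V_C=\bigcap_{i=1}^k f_i^{-1}\big((-\infty,0]\big)$ for suitable continuous linear functionals $f_i\colon V_C^\pm\to\IR$; then the lineality space $L=-V_C\cap V_C=\bigcap_{i=1}^kf_i^{-1}(0)$ is a closed subspace of codimension at most $k$ in $V_C^\pm$.

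If $V_C^\pm$ has infinite codimension in $X$, then $\cl(V_C-V_C)$ has infinite codimension in $X$ and Lemma~\ref{l6.3} directly supplies, for each $\e>0$, a positively hiding convex set $\tilde C$ with $\dH(\tilde C,C)\le\e$. If $V_C^\pm$ has finite codimension in $X$, then so does the subspace $L$, and hence the quotient Banach space $Y=X/L$ is finite-dimensional. Let $q\colon X\to Y$ be the quotient operator and $\tilde C=q(C)$. Since $L\subset V_C$ is a linear subspace, $C$ is invariant under translations by $L$ (i.e.\ $q^{-1}(q(C))=C$), so $\tilde C$ is closed in $Y$ by Lemma~\ref{l3.3}(1) and a routine ray-chase gives $V_{\tilde C}=q(V_C)$; since moreover $L\subset V_C\cap V_{V_C}$, Lemma~\ref{l3.3}(3) yields $\dH(\tilde C,V_{\tilde C})=\dH(q(C),q(V_C))=\dH(C,V_C)=\infty$. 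Taking the finite-dimensional space $Y$ itself as the affine subspace, Lemma~\ref{l5.8} shows that $\tilde C$ is infinitely hiding in $Y$. Finally, $0\in C$ and $L\subset V_C$ force $L\subset C$, whence $V_{L\cap C}-V_{L\cap C}=L-L=L$, and Lemma~\ref{l5.5} (applied with $E=L$ and with both convex sets taken equal to $C$) transfers the property back down to $X$: the set $C$ is infinitely hiding, hence positively hiding, and once more $\tilde C=C$ works.

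The step I expect to be the main obstacle is the last one. The key observation there is that polyhedrality of $V_C$ in $V_C^\pm$ together with finite codimension of $V_C^\pm$ in $X$ forces the lineality space $L$ to have finite codimension in all of $X$, so that after factoring $L$ out the problem lives in a finite-dimensional Banach space, where Lemma~\ref{l5.8} becomes applicable; one then still has to pull the resulting infinitely hiding witness set back up to $X$, which is precisely the role of Lemma~\ref{l5.5}.
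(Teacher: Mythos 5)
Your proof is correct and follows essentially the same route as the paper's: the same three-way case analysis (non-polyhedral characteristic cone handled by Lemma~\ref{l5.3}, infinite codimension of $\cl(V_C-V_C)$ handled by Lemma~\ref{l6.3}, and the remaining case by quotienting out the lineality space to land in a finite-dimensional space where Lemma~\ref{l5.8} applies, then pulling back via Lemma~\ref{l5.5}). The only difference is the order in which the cases are split off, which is immaterial.
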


\begin{proof} If the closed linear subspace $V_C^\pm=\cl(V_C-V_C)$ has infinite codimension in $X$, then the existence of a positively hiding convex set $C_\e\subset X$ with $\dH(C_\e,C)<\e$ follows from Lemma~\ref{l6.3}. So, we assume that $V^\pm_C$ has finite codimension in $X$. If the characteristic cone $V_C$ is not polyhedral in $V_C^\pm$, then the set $C$ is infinitely (and positively) hiding by Lemma~\ref{l5.3}. In this case we can put $C_\e=C$. It remains to consider the case of polyhedral cone $V_C$ in $V_C^\pm$. Since $V_C^\pm$ has finite codimension in $X$, the characteristic cone $V_C$ is polyhedral in $X$ and hence the closed linear subspace $V^\mp_C=-V_C\cap V_C$ has finite codimension in $X$.

Then the quotient Banach space $Y=X/V^\mp_C$ is finite-dimensional. Let $q:X\to Y$ be the quotient operator. Lemma~\ref{l3.3} guarantees that $\dH(qC,V_{qC})=\dH(C,V_C)=\infty$ and then the set $qC$ is infinitely hiding in $Y$ by Lemma~\ref{l5.8}. By Lemma~\ref{l5.5}, the set $C$ is infinitely (and hence positively) hiding in $X$. Letting $C_\e=C$, we finish the proof of the lemma.
\end{proof}

\section{Proof of Theorem~\ref{main}}\label{pf:main}

To prove the first part of Theorem~\ref{main}, for every closed convex subset $C$ of a Banach space $X$  we should prove the equivalence of the following conditions:
\begin{enumerate}
\item $C$ is approximatively polyhedral;
\item the characteristic cone $V_C$ of $C$ is polyhedral in $X$ and $\dH(C,V_C)<\infty$;
\item  $\HH_C$ contains a polyhedral closed convex set;
\item  $\HH_C$ contains no positively hiding closed convex set;
\item the space $\HH_C$ is separable;
\item the space $\HH_C$ has density $\dens(\HH_C)<\mathfrak c$.
\end{enumerate}

It suffices to prove the implications $(1)\Ra(2)\Ra(3)\Ra(5)\Ra(6)\Ra(4)\Ra(2)\Ra(3)\Ra(1)$, among which $(2)\Ra(3)$ and $(5)\Ra(6)$ are trivial. The remaining implications can be established as follows.
\smallskip

To prove the implication $(1)\Ra(2)$, assume that the closed convex set $C$ is approximatively polyhedral and find a polyhedral convex set $P$ with $\dH(C,P)<\infty$. Lemma~\ref{l2.4n} implies that $V_C=V_P$. The polyhedral convex set $P$ can be written as a finite intersection $P=\bigcap_{i=1}^n f_i^{-1}\big((-\infty,a_i]\big)$ of closed hyperplanes determined by some functionals $f_1,\dots,f_n:X\to\IR$ and some real numbers $a_1,\dots,a_n\in\IR$. It is easy to check that $$V_C=V_P=\bigcap_{i=1}^n f_i^{-1}\big((-\infty,0]\big),$$
which means that the characteristic cone $V_C$ of $C$ is polyhedral. By Lemma~\ref{l2.6}, $\dH(P,V_C)=\dH(P,V_P)<\infty$. Consequently, $\dH(C,V_C)\le \dH(C,P)+\dH(P,V_C)<\infty$.
\smallskip

The implications $(3)\Ra(5)$ and $(3)\Ra(1)$ are proved in Lemma~\ref{l3.4} and $(6)\Ra(4)$ in Lemmas~\ref{l4.2}. The implication $(4)\Ra(2)$ follows from Lemmas~\ref{l5.3}, \ref{l6.3} and \ref{l6.4}.

Next, assuming the Banach space is finite-dimensional we will check that the conditions (1)--(6) are equivalent to:
\begin{enumerate}
\item[(7)] $C$ is not positively hiding;
\item[(8)] $C$ is not infinitely hiding.
\end{enumerate}

It suffices to check that $(4)\Ra(7)\Ra(8)\Ra (2)$. In fact, the first two implications $(4)\Ra(7)\Ra(8)$ are trivial, while $(8)\Ra(2)$ follows from Lemmas~\ref{l5.3} and \ref{l5.8}.

\section{Acknowledgements}

The authors would like to express their sincere thanks to Ostap Chervak and Sasha Ravsky for valuable and fruitful discussions, which resulted in finding a correct proof of Lemma~\ref{l5.8}.


\end{document}